\setlist[enumerate,1]{label={(\alph*)}}
\setlist[enumerate,2]{label={(\roman*)}}
\newtheorem{thm}{Theorem}[section]
\newtheorem{prop}[thm]{Proposition}
\newtheorem{lemma}[thm]{Lemma}
\newtheorem{cor}[thm]{Corollary}
\newtheorem{conj}[thm]{Conjecture}
\theoremstyle{definition}
\theoremstyle{remark}
\newtheorem{rmk}[thm]{Remark}
\renewcommand{\O}{\mathcal{O}}
\newcommand{\Z}{\mathbb Z}
\newcommand{\CL}{{\mathbb{L}}}
\newcommand{\cW}{\mathcal{W}}
\newcommand{\I}{\mathcal{I}}
\newcommand{\M}{\overline{\mathcal{M}}}
\DeclareMathOperator{\rk}{rk}
\DeclareMathOperator{\deg1}{deg}
\renewcommand{\d}{\partial}
\newcommand{\mbC}{\mathbb{C}}
\newcommand{\eps}{\varepsilon}
\DeclareMathOperator{\res}{res}
\newcommand{\hP}{\widehat{P}}
\newcommand{\<}{\left<}
\renewcommand{\>}{\right>}
\newcommand{\hL}{\widehat{L}}
\newcommand{\mbZ}{\mathbb{Z}}
\newcommand{\mcL}{\mathcal{L}}
\newcommand{\tphi}{\widetilde{\phi}}
\newcommand{\tGamma}{\widetilde{\Gamma}}
\newcommand{\oM}{\overline{\mathcal{M}}}
\newcommand{\ext}{\mathrm{ext}}
\newcommand{\mcC}{\mathcal{C}}
\newcommand{\mcS}{\mathcal{S}}
\newcommand{\ti}{\widetilde{i}}
\numberwithin{equation}{section}
\begin{document}

\title{Closed extended $r$-spin theory and the Gelfand--Dickey wave function}

\author{Alexandr Buryak}
\address{A.~Buryak:\newline School of Mathematics, University of Leeds, Leeds, LS2 9JT, United Kingdom, and \newline
Faculty of Mechanics and Mathematics, Lomonosov Moscow State University, Moscow, GSP-1, 119991, Russian Federation}
\email{buryaksh@gmail.com}

\author{Emily Clader}
\address{E.~Clader:\newline San Francisco State University, San Francisco, CA 94132-1722, USA}
\email{eclader@sfsu.edu}

\author{Ran J. Tessler}
\address{R.~J.~Tessler, Incumbent of the Lilian and George Lyttle Career Development Chair:\newline Department of Mathematics, Weizmann Institute of Science, POB 26, Rehovot 7610001, Israel}
\email{ran.tessler@weizmann.ac.il}

\thanks{The authors would like to thank Rahul Pandharipande and J\'er\'emy Gu\'er\'e for many fruitful discussions. The work of A. B. (Section 4) was supported by the grant no. 16-11-10260 of the Russian Science Foundation. E.~C. was partially supported by NSF DMS Grant 1810969. R.~T. was supported by a research grant from the Center for New Scientists of Weizmann Institute, and by Dr. Max R\"ossler, the Walter Haefner Foundation and the ETH Z\"urich Foundation.}

\begin{abstract}
We study a generalization of genus-zero $r$-spin theory in which exactly one insertion has a negative-one twist, which we refer to as the ``closed extended" theory, and which is closely related to the open $r$-spin theory of Riemann surfaces with boundary.  We prove that the generating function of genus-zero closed extended intersection numbers coincides with the genus-zero part of a special solution to the system of differential equations for the wave function of the $r$-th Gelfand--Dickey hierarchy.  This parallels an analogous result for the open $r$-spin generating function in the companion paper \cite{BCT} to this work.
\end{abstract}

\maketitle


\section{Introduction}\label{sec:intro}

Witten's conjecture \cite{Witten2DGravity}, which was proposed in 1991 and soon proven by Kontsevich \cite{Kontsevich}, states that the generating function for the integrals of the cotangent line classes $\psi_1, \ldots, \psi_n \in H^2(\M_{g,n})$ on the moduli space of curves is governed by the Korteweg--de Vries (KdV) hierarchy.  At around the same time, Witten also proposed a generalization of his conjecture \cite{Witten93}, in which the moduli space of curves is enhanced to the moduli space $\M_{g,\{\alpha_1, \ldots, \alpha_n\}}^{1/r}$ of $r$-spin structures.  The latter is a natural compactification of the space of smooth marked curves $(C;z_1, \ldots, z_n)$ with a line bundle $S$ and an isomorphism
\[
S^{\otimes r} \cong \omega_C \left(-\sum_{i=1}^n \alpha_i[z_i]\right),
\]
where $\alpha_i \in \{0,1,\ldots, r-1\}$.  This space admits a virtual fundamental class $c_W$, which is referred to as {\it Witten's class} and is defined in genus zero by
\[c_W:= e((R^1\pi_*\mathcal{S})^{\vee}),\]
with $\pi: \mathcal{C} \rightarrow \M_{0,\{\alpha_1, \ldots, \alpha_n\}}^{1/r}$ the universal curve and $\mathcal{S}$ the universal $r$-spin structure; constructions of $c_W$ in higher genus have now been given by a number of authors \cite{PV,ChiodoWitten,Moc06,FJR,CLL}. Let $t^{\alpha}_d$ and~$\eps$ be formal variables, for $0\le \alpha \le r-1$ and $d\ge 0$. The $r$-spin Witten conjecture states that if
\[
F^{\frac{1}{r},c}(t^*_*, \eps): = \sum_{\substack{g \geq 0,\, n\geq 1\\2g-2+n>0}} \sum_{\substack{0 \leq \alpha_1, \ldots, \alpha_n \leq r-1\\ d_1, \ldots, d_n \geq 0}} \frac{\eps^{2g-2}}{n!}\left(r^{1-g}\int_{\M^{1/r}_{g,\{\alpha_1, \ldots, \alpha_n\}}} \hspace{-1cm} c_W \cap \psi_1^{d_1} \cdots \psi_n^{d_n} \right)t^{\alpha_1}_{d_1} \cdots t^{\alpha_n}_{d_n}
\]
is the generating function of $\psi$-intersection numbers against Witten's class, then $\exp(F^{\frac{1}{r},c})$ becomes, after a certain rescaling of the variables~$t^\alpha_d$, a tau-function of the $r$-th Gelfand--Dickey ($r$-GD) hierarchy in the standard normalization of flows~\cite{Dic03}. The superscript ``$c$", which stands for ``closed", is to distinguish this theory from the open theory discussed below.  This result was proven by Faber--Shadrin--Zvonkine \cite{FSZ10}.

If one allows that exactly one of the indices $\alpha_i$ is equal to $-1$ and the rest lie in the range $\{0,1,\ldots, r-1\}$, then the space $\M_{0,\{\alpha_1, \ldots, \alpha_n\}}^{1/r}$ is still defined and $R^1\pi_*\mathcal{S}$ is still a vector bundle in genus zero, so genus-zero $r$-spin intersection numbers can be defined exactly as above. This kind of intersection number was first considered in the paper~\cite{JKV2}. We refer to this theory as {\it closed extended $r$-spin theory}, and we define a genus-zero generating function by
\[
F_0^{\frac{1}{r},\ext}(t_*^*):=\sum_{n \geq 2} \sum_{\substack{0 \leq \alpha_1, \ldots, \alpha_n \leq r-1\\ d_1, \ldots, d_n \geq 0}} \frac{1}{n!}\left(r\int_{\M^{1/r}_{0,\{\alpha_1, \ldots, \alpha_n,-1\}}} \hspace{-1cm} c_W \cap \psi_1^{d_1} \cdots \psi_n^{d_n} \right)t^{\alpha_1}_{d_1} \cdots t^{\alpha_n}_{d_n}.
\]

In the current paper, we prove that $F_0^{\frac{1}{r}, \ext}$ coincides with the genus-zero part of a special solution of the system of differential equations for the wave function of the $r$-GD hierarchy. The proof of this result proceeds by verifying that the closed extended $r$-spin correlators satisfy certain topological recursion relations that allow the entire theory to be recovered from just two initial conditions that can be explicitly calculated.  We then verify that the genus-zero part of the special solution satisfies the same recursions and the same initial conditions.

The reason for our interest in the closed extended generating function arises from an intriguing connection to open $r$-spin theory, which is the generalization of $r$-spin theory to Riemann surfaces with boundary.  In \cite{BCT}, we construct a moduli space $\M_{0,k,l}^{1/r}$ of ``graded $r$-spin disks" that generalizes $\M_{0,n}^{1/r}$ to genus-zero curves $C$ equipped with an involution that realizes $C$ as two copies of a Riemann surface with boundary $\Sigma$, glued along their common boundary.  The moduli space~$\M_{0,k,l}^{1/r}$ itself has boundary and is not necessarily canonically oriented, so one must prescribe boundary conditions for sections of bundles and specify relative orientations in order to ensure that integration of their relative top Chern class is well-defined. After carrying out this technical work, we obtain in \cite{BCT} a definition of open $r$-spin correlators. When calculating recursions for open genus-zero correlators, the closed extended correlators appear naturally, and in fact, there is an intimate relation between the genus-zero sectors of the two theories that we call the open-closed correspondence; see Section~\ref{sec:o-c corr} below. The origin of this correspondence is at present mysterious.

\subsection{Plan of the paper}

In Section~\ref{sec:background}, we recall the relevant background information on closed (non-extended) $r$-spin theory.  Section~\ref{sec:geo} then generalizes the definitions to closed extended $r$-spin theory, proves the topological recursion relations, and calculates the two correlators that form the initial conditions for the potential $F_0^{\frac{1}{r},\ext}$.  We turn to a detailed treatment of the integrable hierarchy in Section~\ref{sec:alg}, which allows us to state the main result of the paper, Theorem~\ref{theorem:closed extended and wave}, and to prove it. Finally, in Section~\ref{sec:o-c corr}, we explain the correspondence between closed extended and open $r$-spin theory.

\section{Background on $r$-spin theory}\label{sec:background}

We begin by reviewing the relevant background on the moduli space of $r$-spin structures and its intersection theory, referring the reader to \cite{ChiodoStable,JKV}, among many other references, for more details.

Throughout what follows, fix an integer $r \geq 2$. An {\it $r$-spin structure} on a smooth marked curve $(C;z_1, \ldots, z_n)$ of genus $g$ is a line bundle $L$ on $C$ together with an isomorphism
\begin{equation}
\label{eq:Lr}
L^{\otimes r} \cong \omega_{C,\log}:= \omega_C\left(\sum_{i=1}^n[z_i]\right).
\end{equation}
There is a smooth Deligne--Mumford stack $\mathcal{M}^{1/r}_{g,n}$ parameterizing such objects, equipped with a finite \'etale morphism to (indeed, a torsor structure over) the moduli space $\mathcal{M}_{g,n}$ of smooth curves.  Some care must be taken in the compactification in order to preserve these properties of the moduli space of $r$-spin structures, and there are several ways to do so, as summarized in~\cite[Section 2.2]{CZ}.  In our case, we compactify by allowing orbifold structure.

More precisely, following \cite{AV}, we define an {\it orbifold curve} as a one-dimensional Deligne--Mumford stack with a finite ordered collection of marked points and at worst nodal singularities such that
\begin{enumerate}
\item the only points with nontrivial isotropy are marked points and nodes;
\item all nodes are {\it balanced}---i.e., in the local picture $\{xy=0\}$ at a node, the action of the distinguished generator of the isotropy group $\Z_k$ is given by
\[(x,y) \mapsto (\zeta_k, \zeta_k^{-1}y),\]
where $\zeta_k$ is a primitive $k$th root of unity.
\end{enumerate}
An orbifold curve is said to be {\it $r$-stable}, following \cite{ChiodoStable}, if the coarse underlying marked curve is stable and the isotropy group is $\Z_r$ at every special point.

Let $(C;z_1, \ldots, z_n)$ be an $r$-stable curve.  An {\it $r$-spin structure} on $C$ is an orbifold line bundle~$L$ together with an isomorphism as in \eqref{eq:Lr}.  If $z_0 \in C$ is either a marked point or a branch of a node, then the {\it multiplicity} of $L$ at $z_0$ is defined as the integer $m \in \{0,1,\ldots, r-1\}$ such that, in local coordinates $(z,v)$ on the total space of $L$ near $z_0$, the action of the distinguished generator of the isotropy group $\Z_r$ at $z_0$ is given by
\[ (z,v) \mapsto (\zeta_r z, \zeta_r^mv).\]
A standard but crucial fact about the multiplicities is that they determine the relationship between $L$ and its pushforward $|L|$ to the coarse underlying curve.  Specifically, suppose that $C' \subset C$ is an irreducible component with special points $\{z_k\}$ at which the multiplicities of $L$ are $\{m_k\}$.  Then, if $\rho: C' \rightarrow |C'|$ is the natural map to the coarse underlying curve, we have
\[L|_{C'} = \rho^*\left(|L|\big|_{|C'|}\right) \otimes \O_{C'}\left(\sum_k \frac{m_k}{r}[z_k]\right).\]
Given that $\omega_{C',\log} = \rho^*\omega_{|C'|, \log}$, we find that $|L|\big|_{|C'|}$ satisfies the equation
\begin{equation}
\label{eq:mults}
\left(|L|\big|_{|C'|}\right)^{\otimes r} \cong \omega_{|C'|,\log} \left(-\sum_{k} m_k[z_k]\right).
\end{equation}
Using \eqref{eq:mults}, one can prove (see, for example, the appendix of \cite{CR}) that there is an equivalence of categories between $r$-spin structures as above and orbifold line bundles $L$ with an isomorphism
\begin{equation*}
L^{\otimes r} \cong \omega_{C} \left(-\sum_{i=1}^n \mu_i[z_i]\right)
\end{equation*}
for which $\mu_i \in \{-1,0,1,\ldots, r-2\}$ and the isotropy groups at all markings act trivially on the fiber of $L$.  Finally, replacing $L$ by $S:=L(-\sum_{\mu_i=-1} [z_i])$, we find that there is a further equivalence with the category of orbifold line bundles satisfying
\begin{equation}
\label{eq:twistedrspin}
S^{\otimes r} \cong \omega_{C} \left(-\sum_{i=1}^n \alpha_i[z_i]\right)
\end{equation}
with
\[\alpha_i \in \{0,1,\ldots, r-1\},\]
where again, the isotropy groups at all markings act trivially on the fiber of $S$.  We view $r$-spin structures as in \eqref{eq:twistedrspin} in what follows, and we refer to the integers $\alpha_i$ as {\it twists}.  When $\alpha_i=r-1$, we say that $z_i$ is a {\it Ramond} marked point, and otherwise, it is said to be {\it Neveu--Schwarz}.

There is a proper, smooth Deligne--Mumford stack $\M_{g,n}^{1/r}$ of dimension $3g-3+n$ parameterizing $r$-stable curves together with an orbifold line bundle $S$ satisfying \eqref{eq:twistedrspin}.  It is equipped with a decomposition into open and closed substacks
\[\M^{1/r}_{g,\{\alpha_1, \ldots, \alpha_n\}} \subset \M^{1/r}_{g,n}\]
on which $S$ has twist $\alpha_i$ at $z_i$ for each $i \in \{1, \ldots, n\}$.  Furthermore, it is equipped with a virtual fundamental class $c_W$ from which a beautiful intersection theory can be defined.  The construction of $c_W$, which we refer to as {\it Witten's class}, was first suggested by Witten in genus zero.  Specifically, it is straightforward to check that in genus zero, if $\pi: \mathcal{C} \rightarrow \M_{g,n}^{1/r}$ denotes the universal curve and $\mathcal{S}$ the universal line bundle, then $R^0\pi_*\mathcal{S} = 0$ and hence $R^1\pi_*\mathcal{S}$ is a vector bundle.  We define the {\it Witten bundle} as
\[\mathcal{W}:= (R^1\pi_*\mathcal{S})^{\vee} = R^0\pi_*(\mathcal{S}^{\vee} \otimes \omega_{\pi})\]
and set $c_W$ to be its top Chern class:
\[c_W := e(\mathcal{W}).\]
Via the Riemann--Roch formula, one can check that the restriction of $c_W$ to $\M^{1/r}_{0,\{\alpha_1, \ldots, \alpha_n\}}$ has complex codimension
\begin{equation*}
\frac{-(r-2) + \sum_{i=1}^n \alpha_i}{r},
\end{equation*}
which is a non-negative integer if and only if $\M^{1/r}_{0,\{\alpha_1, \ldots, \alpha_n\}}$ is nonempty.

It is interesting and highly non-trivial to find the appropriate generalization of this definition to higher genus.  Various constructions have now been given, by Polishchuk--Vaintrob \cite{PV}, Chiodo \cite{ChiodoWitten}, Mochizuki~\cite{Moc06}, Fan--Jarvis--Ruan \cite{FJR}, and Chang--Li--Li \cite{CLL}. The result, in any of these cases, is a class on $\M^{1/r}_{g,\{\alpha_1, \ldots, \alpha_n\}}$ of complex codimension
\begin{equation}
\label{eq:rankW}
e:=\frac{(g-1)(r-2) + \sum_{i=1}^n \alpha_i}{r}.
\end{equation}
All of these constructions have been shown to agree after pushforward to $\M_{g,n}$ \cite[Theorem 3]{PPZ}, at least when all insertions are Neveu--Schwarz (which, by the Ramond vanishing explained below, is all that is needed).

One obtains correlators by integrating Witten's class against $\psi$-classes on the moduli space.  Namely, for each $i\in \{1, \ldots, n\}$, let $\mathbb{L}_i$ be the cotangent line bundle to the coarse curve $|C|$ at the $i$th marked point.  Then, in genus zero, we define closed $r$-spin correlators by
\[\left\langle \prod_{i=1}^n \tau_{d_i}^{\alpha_i}\right\rangle_0^{\frac{1}{r}} := r\int_{\M^{1/r}_{0,\{\alpha_1, \ldots, \alpha_n\}}} e\left( \mathcal{W} \oplus \bigoplus_{i=1}^l \mathbb{L}_i^{\oplus d_i}\right),\]
which is nonzero only if the equation
\begin{equation}\label{eq:when_int_is_number_closed_extended}
e+\sum_{i=1}^n d_i = 3g-3+n
\end{equation}
is satisfied with $g=0$.

\begin{rmk}
Putting the coefficient $r$ in front of the integral in the definition of the correlators $\left\langle \prod_{i=1}^n \tau_{d_i}^{\alpha_i}\right\rangle_0^{\frac{1}{r}}$ and, more generally, putting the rescaling coefficient $r^{1-g}$ in the definition of the generating function $F^{\frac{1}{r},c}(t^*_*, \eps)$ is a matter of tradition. This allows to present some results in a slightly more compact form.
\end{rmk}

A crucial fact that we require about these correlators is that they satisfy {\it Ramond vanishing}: if $\alpha_i=r-1$ for some $i$, then the correlator is zero.  To prove this, suppose that $S$ satisfies \eqref{eq:twistedrspin} and that $\alpha_1 = r-1$.  Let $\mathcal{S}$ be the universal line bundle on the universal curve $\pi: \mathcal{C} \rightarrow \M^{1/r}_{0,\{\alpha_1, \ldots \alpha_n\}}$, let $\Delta_1 \subset \mathcal{C}$ be the divisor corresponding to the first marked point, and let $\widetilde{\mathcal{S}} := \mathcal{S}\left(\Delta_1\right)$.  Then there is an exact sequence
\begin{equation}\label{eq:vanishingles}
0 \rightarrow R^0\pi_*\mathcal{S} \rightarrow R^0\pi_*\widetilde{\mathcal{S}} \rightarrow \sigma_1^*\widetilde{\mathcal{S}} \rightarrow R^1\pi_*\mathcal{S} \rightarrow R^1\pi_*\widetilde{\mathcal{S}}\rightarrow 0,
\end{equation}
where $\sigma_1$ is the section corresponding to the first marked point.  We have $R^0\pi_*\widetilde{\mathcal{S}} = 0$ and
\[(\sigma_1^*\widetilde{\mathcal{S}})^{\otimes r} \cong \sigma_1^*\omega_{\pi, \log} \cong \O_{\M_{0,\{\alpha_1 ,\ldots, \alpha_n\}}^{1/r}},\]
which implies that $e(\sigma_1^*\widetilde{\mathcal{S}}_1) = 0$.  By multiplicativity of the Euler class in \eqref{eq:vanishingles}, this implies that $c_W = 0$.

\section{Closed extended theory: geometry}\label{sec:geo}

Although we have thus far only defined the Witten class $c_W$ under the assumption that all twists lie in the range $\{0,1,\ldots, r-1\}$, there exists a smooth Deligne--Mumford moduli stack~$\M_{g,\{\alpha_1, \ldots, \alpha_n\}}^{1/r}$ parameterizing $r$-stable curves with a line bundle $S$ satisfying \eqref{eq:twistedrspin} for {\it any} tuple of integers $\{\alpha_1, \ldots, \alpha_n\}$.  This observation was made by Jarvis--Kimura--Vaintrob \cite{JKV2}, who studied precisely how the virtual class should vary when $\alpha_i$ is replaced by $\alpha_i+r$.

\subsection{Definition of extended $r$-spin correlators}\label{subsection:definition of extended}

Suppose that $g=0$,
\begin{equation}
\label{eq:range}
\alpha_i \in \{-1,0,\ldots, r-1\}
\end{equation}
for each $i$, and there is at most one $i$ such that $\alpha_i=-1$.  In this case, one still has $R^0\pi_*\mathcal{S}=0$, so we can define $\mathcal{W}:= (R^1\pi_*\mathcal{S})^{\vee}$ and set
\[c_W^{\ext} := e((R^1\pi_*\mathcal{S})^{\vee}) = e(R^0\pi_*(\mathcal{S}^{\vee} \otimes \omega_{\pi})).\]
When there is no $i$ for which $\alpha_i=-1$, this simply recovers the definition of $c_W$ given above.  The same formula \eqref{eq:rankW} (with $g=0$) gives the complex codimension of $c_W^{\ext}$, and there are correlators
\[\left\langle \prod_{i=1}^l \tau_{d_i}^{\alpha_i} \right\rangle^{\frac{1}{r},\ext}_{0} := r\int_{\M_{0,\{\alpha_1, \ldots, \alpha_n\}}^{1/r}} e\left(\mathcal{W} \oplus \bigoplus_{i=1}^l \mathbb{L}_i^{\oplus d_i}\right)\]
that vanish unless the dimension condition \eqref{eq:when_int_is_number_closed_extended} is satisfied.  We refer to these as {\it extended $r$-spin correlators}.

\begin{rmk}
We caution the reader that the Ramond vanishing property does {\it not} hold for the extended $r$-spin correlators.  For example, a nonvanishing extended correlator with an insertion of twist $r-1$ is calculated in Lemma~\ref{lemma:small closed extended correlators}.
\end{rmk}

\subsection{Properties of the extended Witten class}

Our goal for this section is to prove that the genus-zero extended $r$-spin correlators satisfy certain equations analogous to the string equation and topological recursion relations in Gromov--Witten theory.  To do so, we must study how $c_W^{\ext}$ behaves under the inclusion of boundary divisors and forgetful morphisms.

Let us fix some notation.  In general, boundary strata in $\M_{g,\{\alpha_1, \ldots, \alpha_n\}}^{1/r}$ are indexed by certain decorated graphs, in which each vertex $v$ represents an irreducible component and is labeled with its genus $g(v)$, each half-edge $h$ represents a half-node and is labeled with its twist $\alpha(h)$, and there are $n$ numbered legs labeled with the twists $\alpha_1, \ldots, \alpha_n$.  We denote by $\vec{\alpha}(v)$ the tuple recording the twists at all half-edges incident to $v$, including the legs.  Note that the elements of~$\vec{\alpha}(v)$ lie in $\{-1,0,1,\ldots, r-1\}$ and that the twist is $-1$ at each half-node on which the isotropy group acts trivially on the fiber of $S$.

Given $\Gamma$ as above, let $\M_{\Gamma}^{1/r} \subset \M_{g,\{\alpha_1,\ldots,\alpha_n\}}^{1/r}$ be the boundary stratum consisting of $r$-spin curves with decorated dual graph $\Gamma$.  Let $\widetilde\Gamma$ be the disconnected graph obtained by cutting all of the edges of $\Gamma$, and let
\[\M^{1/r}_{\widetilde\Gamma} := \prod_{v \in V(\Gamma)} \M^{1/r}_{g(v), \vec{\alpha}(v)}\]
be the associated moduli space, where $V(\Gamma)$ is the vertex set of $\Gamma$.  Unlike the moduli space of curves, the $r$-spin moduli space does not in general have a gluing map $\M^{1/r}_{\widetilde\Gamma} \rightarrow \M_{\Gamma}^{1/r}$, because there is no canonical way to glue the fibers of $S$ at the nodes.  Nevertheless, letting $\M_{\Gamma}$ and~$\M_{\widetilde\Gamma}$ denote the moduli spaces of marked curves with dual graphs $\Gamma$ and $\widetilde\Gamma$, respectively, one has morphisms
\begin{gather}\label{eq:gluing sequence}
\M^{1/r}_{\widetilde\Gamma} \xleftarrow{p} \M_{\widetilde\Gamma} \times_{\M_{\Gamma}} \M_{\Gamma}^{1/r} \xrightarrow{\widetilde\mu} \M_{\Gamma}^{1/r}\xrightarrow{\widetilde{i}_{\Gamma}} \M_{g,\{\alpha_1,\ldots,\alpha_n\}}^{1/r}.
\end{gather}

Let $g=0$. Then for any decorated graph $\Gamma$ of genus $0$ we have $\oM_{\tGamma}=\oM_{\Gamma}$ and, therefore, $\oM_{\tGamma}\times_{\oM_{\Gamma}}\oM_{\Gamma}^{1/r}=\oM_{\Gamma}^{1/r}$. Then the map~$\widetilde{\mu}$ in~\eqref{eq:gluing sequence} is the identity. Suppose now that the twists~$\alpha_1, \ldots, \alpha_n$ at the legs of $\Gamma$ satisfy \eqref{eq:range} with at most one $i$ such that $\alpha_i=-1$, then we refer to $\Gamma$ as a {\it genus-zero extended $r$-spin dual graph}.  Note that even in this case, the vertex moduli spaces $\M^{1/r}_{0, \vec{\alpha}(v)}$ may not themselves fit into the extended framework, since there may be more than one half-edge of twist $-1$ incident to a given vertex.  Nevertheless, there is a consistent way to rectify the situation.  Indeed, there is a unique function
\[\alpha': H(\Gamma) \rightarrow \{-1,0,1,\ldots, r-1\}\]
on the half-edge set $H(\Gamma)$ such that
\begin{enumerate}
\item $\alpha'(h) \equiv \alpha(h) \mod r$, for all $h$,
\item $\alpha'(h) = \alpha(h)$, if $h$ is a leg,
\item for each edge $e = (h_1,h_2)$, such that $\alpha(h_1) = \alpha(h_2) = -1$, we have
\begin{enumerate}
\item $\alpha'(h_1)=\alpha'(h_2)=r-1$, if $\alpha_1,\ldots,\alpha_n\in\{0,\ldots,r-1\}$,
\item exactly one $i \in \{1,2\}$ with $\alpha'(h_i) = r-1$, if there exists $1\le j\le n$, such that $\alpha_j=-1$,
\end{enumerate}
\item for any edge $e$, each of the two connected components of the graph, obtained by cutting~$e$, has at most one leg $h$ for which $\alpha'(h) = -1$.
\end{enumerate}
Let
\begin{equation}
\label{eq:Mext}
\M_{\widetilde\Gamma}^{\ext} := \prod_{v \in V(\Gamma)} \M^{1/r}_{0, \vec{\alpha}'(v)},
\end{equation}
where $\vec{\alpha}'(v)$ is the tuple recording the values of $\alpha'(h)$ at all of the half-edges incident to $v$.  For each $v$, let $\mathcal{T}(v) \subset H(v)$ denote the subset of the incident half-edges for which $\alpha'(h) \neq \alpha(h)$.  Then there is a morphism $\tau_v: \M^{1/r}_{0, \vec{\alpha}(v)} \rightarrow \M^{1/r}_{0,\vec{\alpha}'(v)}$ defined by sending an $r$-spin structure $S$ with twists $\vec{\alpha}(v)$ to
\[S':= S \otimes \O\left(\sum_{h \in \mathcal{T}(v)} -[z_h]\right),\]
where $z_h$ is the marked point corresponding to $h$; it is straightforward to see that $S'$ is an $r$-spin structure with twists $\vec{\alpha}'(v)$.  The product of the morphisms $\tau_v$ defines
\[\tau: \M_{\widetilde\Gamma}^{1/r} \rightarrow \M^{\ext}_{\widetilde\Gamma},\]
so we now have
\[\M^{\ext}_{\widetilde\Gamma} \xleftarrow{q} \M_{\Gamma}^{1/r} \xrightarrow{\widetilde{i}_{\Gamma}} \M_{0,\{\alpha_1,\ldots,\alpha_n\}}^{1/r},\]
where $q:=\tau \circ p$.

We can now state the decomposition property of the extended Witten class along nodes.

\begin{lemma}
\label{lem:decomposition}
Let $\Gamma$ be a genus-zero extended $r$-spin dual graph with two vertices $v_1$ and $v_2$ connected by a single edge $e$.  Let $\M_1$ and $\M_2$ be the factors of $\M_{\widetilde\Gamma}^{\ext}$ corresponding to the two vertices, and let $c_{W_1}^{\ext}$ and $c_{W_2}^{\ext}$ be the Witten classes on these two moduli spaces.  Then
\begin{equation}
\label{eq:decomp}
q_*\widetilde{i}_{\Gamma}^*c_W^{\ext} = c_{W_1}^{\ext} \boxtimes c_{W_2}^{\ext}.
\end{equation}
\end{lemma}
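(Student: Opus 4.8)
The plan is to compute $\widetilde{i}_{\Gamma}^{*}c_W^{\ext}$ by partially normalizing the universal curve at the node attached to $e$, and then to match the resulting bundle with the external product of the corrected Witten bundles through the map $q=\tau\circ p$. Since we are in genus zero, $R^{0}\pi_{*}\mathcal{S}=0$, so cohomology-and-base-change identifies $\widetilde{i}_{\Gamma}^{*}c_W^{\ext}$ with $e\bigl((R^{1}\pi_{*}\mathcal{S}_{\Gamma})^{\vee}\bigr)$, where $\mathcal{S}_{\Gamma}$ and $\pi$ denote the universal $r$-spin structure and universal curve over $\M_{\Gamma}^{1/r}$. Thus the entire problem is to understand $R^{1}\pi_{*}\mathcal{S}_{\Gamma}$ as a bundle.

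First I would set up the normalization. Let $\nu\colon\widetilde{\mathcal{C}}\to\mathcal{C}$ separate the universal node, so that $\nu^{*}\mathcal{S}_{\Gamma}$ restricts to universal $r$-spin structures $\mathcal{S}_{1},\mathcal{S}_{2}$ with twists $\vec{\alpha}(v_{1}),\vec{\alpha}(v_{2})$ on the two branches. Rather than comparing $\mathcal{S}_{\Gamma}$ with $\nu^{*}\mathcal{S}_{\Gamma}$ directly, I would compare it with $\nu_{*}\mathcal{S}'$, where $\mathcal{S}'$ restricts on branch $i$ to the corrected structure $\mathcal{S}_{i}':=\mathcal{S}_{i}\otimes\O\bigl(-\sum_{h\in\mathcal{T}(v_i)}[z_h]\bigr)$ defining $\tau$; both are subsheaves of $\nu_{*}\nu^{*}\mathcal{S}_{\Gamma}$ differing by torsion at the node, producing short exact sequences with a skyscraper $\mathcal{Q}$. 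The essential input is the $\Z_{r}$-invariant part of the fiber of $\mathcal{S}_{\Gamma}$ at the balanced node, which is nonzero precisely when the isotropy acts trivially on both branches, i.e.\ when $\alpha(h_1)=\alpha(h_2)=-1$ --- exactly the configuration in which $\alpha'$ differs from $\alpha$. Since the corrected structures $\mathcal{S}_i'$ carry at most one twist equal to $-1$, one has $R^{0}\pi_{*}\mathcal{S}_i'=0$, so that $R^{1}\pi_{*}\mathcal{S}_i'=(\mathcal{W}_i')^{\vee}$ is locally free of the expected rank $e_i'$.

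The heart of the argument is a case analysis according to whether $e_1'+e_2'=e$. When the ranks match --- which holds unless the edge is corrected on both sides to twist $r-1$ --- a chase of the combined normalization and twisting sequences should produce a bundle isomorphism $\widetilde{i}_{\Gamma}^{*}\mathcal{W}\cong q^{*}(\mathcal{W}_1'\oplus\mathcal{W}_2')$, whence $e(\widetilde{i}_{\Gamma}^{*}\mathcal{W})=q^{*}\bigl(c_{W_1}^{\ext}\boxtimes c_{W_2}^{\ext}\bigr)$ by multiplicativity of the Euler class (if one vertex happens to carry a twist-$(r-1)$ half-node with no leg of twist $-1$, both sides simply vanish by Ramond vanishing). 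In the excluded case $e_1'+e_2'=e+1$, which by condition~(3a) occurs only when no leg has twist $-1$, both corrected vertices acquire a twist-$(r-1)$ half-node and are ordinary (non-extended); the Ramond vanishing of Section~\ref{sec:background} then forces $c_{W_1}^{\ext}\boxtimes c_{W_2}^{\ext}=0$, and the same vanishing propagates through the normalization sequence to give $\widetilde{i}_{\Gamma}^{*}c_W^{\ext}=0$, so \eqref{eq:decomp} reads $0=0$.

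Finally I would pass to $\M_{\widetilde{\Gamma}}^{\ext}$. Applying $q_{*}$ and the projection formula to the bundle identity gives $q_{*}\widetilde{i}_{\Gamma}^{*}c_W^{\ext}=\deg(q)\,\bigl(c_{W_1}^{\ext}\boxtimes c_{W_2}^{\ext}\bigr)$, and it remains to verify that $q$ has degree one, which in genus zero reduces to checking that the gluing map $p$ at a single separating node introduces no multiplicity after the correction. The main obstacle is the bookkeeping in the third paragraph: reconciling the node skyscraper $\pi_{*}\mathcal{Q}$ with the twisting $\tau$ so that ranks and Euler classes match uniformly across all node types, and confirming the vanishing of all the relevant $R^{0}$ terms once the correction has been applied.
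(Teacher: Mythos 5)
Your skeleton matches the paper's---normalize the universal curve at the node, analyze $R^1\pi_*$ through the resulting exact sequences, split into cases by node type, kill the rank-mismatched case (both half-nodes corrected to $r-1$, no leg of twist $-1$) via Ramond vanishing, and finish with $\deg q=1$---but your treatment of the decisive case, a Ramond node in the presence of a leg of twist $-1$, is genuinely different. The paper never twists the spin bundle on the normalization: it instead proves Lemma~\ref{lemma:alternative expression for extended} (realizing $c_W^{\ext}$ as $c_{\rk-1}$ of the dual of $R^1\pi_*$ of the $(r-1)$-twisted bundle) and the symmetry Corollary~\ref{corollary:symmetry of extended}, and then reads \eqref{eq:decomp} off the total-Chern-class identity for the \emph{uncorrected} normalization sequence, evaluated one degree below the top. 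Your route---comparing $\mathcal{S}$ with the corrected bundle on the normalization---is viable and arguably more direct, but two points need tightening. First, the intermediate sheaf $\nu_*\nu^*\mathcal{S}$ restricts, on the branch carrying the $-1$ leg, to a bundle with \emph{two} twists equal to $-1$, for which $R^0\pi_{1*}$ does not vanish and $R^1\pi_{1*}$ need not be locally free; so the ``chase of the combined sequences'' as you describe it is not routine. The clean substitute is the kernel sequence
\begin{equation*}
0 \longrightarrow \nu_*\mathcal{S}_1' \longrightarrow \mathcal{S} \longrightarrow \nu_*\mathcal{S}_2 \longrightarrow 0,
\end{equation*}
where $\mathcal{S}_1'$ is the branch twisted down at its half-node (the one corrected to $r-1$, which by condition (4) is the branch containing the $-1$ leg) and $\mathcal{S}_2$ is the untouched branch; since all three terms have vanishing $R^0\pi_*$, this yields a short exact sequence of vector bundles $0\to R^1\pi_{1*}\mathcal{S}_1'\to R^1\pi_*\mathcal{S}\to R^1\pi_{2*}\mathcal{S}_2\to 0$. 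Second, and relatedly, what one obtains is this extension, not the bundle isomorphism $\widetilde{i}_\Gamma^*\mathcal{W}\cong q^*(\mathcal{W}_1'\oplus\mathcal{W}_2')$ you assert; this is harmless for the lemma, since Euler classes are multiplicative in short exact sequences, but the isomorphism is stronger than the argument provides. With these repairs your proof is correct and somewhat shorter than the paper's, at the cost of not producing the reusable symmetry statement of Corollary~\ref{corollary:symmetry of extended}.
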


Before proving the lemma, let us present an alternative way to construct the extended Witten class. Note that for each $1\le i\le n$ there is a canonical identification $\oM_{0,\{\alpha_1,\ldots,\alpha_n\}}^{1/r}=\oM_{0,\{\alpha_1,\ldots,\alpha_i+r,\ldots,\alpha_n\}}^{1/r}$ defined by sending an $r$-spin structure $S$ with twists $\alpha_1,\ldots,\alpha_n$ to
\begin{gather*}
S':= S \otimes \O\left(-[z_i]\right).
\end{gather*}
Moreover, this allows to identify the moduli spaces $\oM_{0,\{\alpha_1,\ldots,\alpha_n\}}^{1/r}$ and $\oM_{0,\{\beta_1,\ldots,\beta_n\}}^{1/r}$, if all the differences $\alpha_i-\beta_i$ are divisible by~$r$. Denote by $\mcS_{\alpha_1,\ldots,\alpha_n}\to\mcC$ the universal line bundle over the universal curve $\mcC\xrightarrow{\pi}\oM_{0,\{\alpha_1,\ldots,\alpha_n\}}^{1/r}$.

\begin{lemma}\label{lemma:alternative expression for extended}
For any $n\ge 2$ and an $n$-tuple $0\le\beta_1,\ldots,\beta_n\le r-1$, the extended Witten class~$c^\ext_W$ on $\oM^{1/r}_{0,\{-1,\beta_1,\ldots,\beta_n\}}$ is equal to
$$
c_W^\ext=c_{\rk-1}\left((R^1\pi_*\mcS_{r-1,\beta_1,\ldots,\beta_n})^\vee\right)\in H^*\left(\oM^{1/r}_{0,\{r-1,\beta_1,\ldots,\beta_n\}}\right)=H^*\left(\oM^{1/r}_{0,\{-1,\beta_1,\ldots,\beta_n\}}\right),
$$
where $\rk=\rk\left((R^1\pi_*\mcS_{r-1,\beta_1,\ldots,\beta_n})^\vee\right)=\frac{1+\sum\beta_i}{r}$.
\end{lemma}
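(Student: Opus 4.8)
The plan is to reuse, almost verbatim, the exact sequence that already appears in the proof of Ramond vanishing, but to extract from it the \emph{second}-highest Chern class rather than the top one. Under the canonical identification $\oM^{1/r}_{0,\{r-1,\beta_1,\ldots,\beta_n\}}=\oM^{1/r}_{0,\{-1,\beta_1,\ldots,\beta_n\}}$, the two universal line bundles are related by $\mcS_{-1,\beta_1,\ldots,\beta_n}=\mcS_{r-1,\beta_1,\ldots,\beta_n}(\Delta_0)$, where $\Delta_0\subset\mcC$ is the divisor of the marked point carrying the distinguished twist (namely $-1$, resp.\ $r-1$). Writing $\mcS:=\mcS_{r-1,\beta_1,\ldots,\beta_n}$ and $\widetilde{\mcS}:=\mcS(\Delta_0)=\mcS_{-1,\beta_1,\ldots,\beta_n}$, the short exact sequence $0\to\mcS\to\widetilde{\mcS}\to\sigma_0^*\widetilde{\mcS}\to 0$ of sheaves on $\mcC$ gives, exactly as in \eqref{eq:vanishingles}, the long exact sequence
\begin{equation*}
0\to R^0\pi_*\mcS\to R^0\pi_*\widetilde{\mcS}\to\sigma_0^*\widetilde{\mcS}\to R^1\pi_*\mcS\to R^1\pi_*\widetilde{\mcS}\to 0.
\end{equation*}

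First I would observe that both $R^0$ terms vanish in genus zero: the term $R^0\pi_*\mcS$ vanishes because $\mcS$ is an honest (twist $r-1$) $r$-spin structure, while $R^0\pi_*\widetilde{\mcS}$ vanishes because $\widetilde{\mcS}$ is the extended structure, for which $R^0\pi_*=0$ was already recorded in Section~\ref{subsection:definition of extended}. Hence the sequence collapses to a short exact sequence of vector bundles
\begin{equation*}
0\to\sigma_0^*\widetilde{\mcS}\to R^1\pi_*\mcS\to R^1\pi_*\widetilde{\mcS}\to 0,
\end{equation*}
whose outer terms have ranks $1$ and $\rk-1$, consistently with $R^1\pi_*\mcS$ having rank $\rk=\frac{1+\sum\beta_i}{r}$. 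Dualizing yields $0\to(R^1\pi_*\widetilde{\mcS})^\vee\to(R^1\pi_*\mcS)^\vee\to L\to 0$, where $L:=(\sigma_0^*\widetilde{\mcS})^\vee$ is a line bundle and $(R^1\pi_*\widetilde{\mcS})^\vee$ is precisely the extended Witten bundle, whose top Chern class is $c_W^\ext$.

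The next step is to compute $c_1(L)=0$, by the same computation used for Ramond vanishing: since $\widetilde{\mcS}^{\otimes r}\cong\omega_\pi(\Delta_0-\sum_i\beta_i\Delta_i)$, restricting along $\sigma_0$ and using that the sections $\Delta_i$ ($i\ge 1$) are disjoint from $\sigma_0$ gives $(\sigma_0^*\widetilde{\mcS})^{\otimes r}\cong\sigma_0^*\omega_{\pi,\log}\cong\O$, so $c_1(\sigma_0^*\widetilde{\mcS})$ is $r$-torsion and therefore vanishes rationally. The Whitney formula then reads $c\big((R^1\pi_*\mcS)^\vee\big)=c\big((R^1\pi_*\widetilde{\mcS})^\vee\big)\,(1+c_1(L))=c\big((R^1\pi_*\widetilde{\mcS})^\vee\big)$; extracting the degree-$(\rk-1)$ part gives $c_{\rk-1}\big((R^1\pi_*\mcS)^\vee\big)=c_{\rk-1}\big((R^1\pi_*\widetilde{\mcS})^\vee\big)=e\big((R^1\pi_*\widetilde{\mcS})^\vee\big)=c_W^\ext$, as claimed.

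The argument is short, and the one genuinely delicate point is the vanishing $c_1(L)=0$: were this class to survive, the Whitney formula would contribute an extra summand $c_{\rk-2}\big((R^1\pi_*\widetilde{\mcS})^\vee\big)\,c_1(L)$ and the identity would fail. As a consistency check, the top Chern class gives $c_{\rk}\big((R^1\pi_*\mcS)^\vee\big)=c_{\rk-1}\big((R^1\pi_*\widetilde{\mcS})^\vee\big)\,c_1(L)=0$, which recovers Ramond vanishing for the non-extended class $e\big((R^1\pi_*\mcS)^\vee\big)$. Beyond this, one should confirm, exactly as in the Ramond argument, that $\sigma_0^*\widetilde{\mcS}$ is the honest line bundle appearing in the sequence and that the two $R^0$-vanishings hold over the whole moduli space; both follow from the genus-zero hypothesis together with the twist-range assumption \eqref{eq:range}.
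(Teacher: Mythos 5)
Your proof is correct and follows essentially the same route as the paper's: both use the long exact sequence \eqref{eq:vanishingles} over $\oM^{1/r}_{0,\{r-1,\beta_1,\ldots,\beta_n\}}$ with $\widetilde\mcS=\mcS_{-1,\beta_1,\ldots,\beta_n}$, the vanishing of the $R^0$ terms and of $c_1(\sigma_0^*\widetilde\mcS)$, and multiplicativity of the total Chern class to identify $c_W^\ext$ with $c_{\rk-1}\bigl((R^1\pi_*\mcS_{r-1,\beta_1,\ldots,\beta_n})^\vee\bigr)$. You simply spell out the Whitney-formula step in more detail than the paper does.
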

\begin{proof}
Let us consider the exact sequence~\eqref{eq:vanishingles} over the moduli space~$\oM_{0,\{r-1,\beta_1,\ldots,\beta_n\}}^{1/r}$. Note that $\mcS=\mcS_{r-1,\beta_1,\ldots,\beta_n}$ and $\widetilde\mcS=\mcS_{-1,\beta_1,\ldots,\beta_n}$. Therefore, by multiplicativity of the total Chern class in~\eqref{eq:vanishingles},
$$
c_W^\ext=e\left((R^1\pi_*\mcS_{-1,\beta_1,\ldots,\beta_n})^\vee\right)=c_{\rk-1}\left((R^1\pi_*\mcS_{r-1,\beta_1,\ldots,\beta_n})^\vee\right),
$$
which proves the lemma.
\end{proof}

This lemma implies the following symmetry property of the extended Witten class.

\begin{cor}\label{corollary:symmetry of extended}
Suppose $n\ge 1$ and $0\le\beta_1,\ldots,\beta_n\le r-1$. Then, under the identification $\oM_{0,\{-1,r-1,\beta_1,\ldots,\beta_n\}}^{1/r}=\oM_{0,\{r-1,-1,\beta_1,\ldots,\beta_n\}}^{1/r}$, the extended Witten classes on these moduli spaces become equal. In other words,
\begin{gather}\label{eq:symmetry of extended}
e\left((R^1\pi_*\mcS_{-1,r-1,\beta_1,\ldots,\beta_n})^\vee\right)=e\left((R^1\pi_*\mcS_{r-1,-1,\beta_1,\ldots,\beta_n})^\vee\right).
\end{gather}
\end{cor}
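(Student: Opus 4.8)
The plan is to deduce the corollary from Lemma~\ref{lemma:alternative expression for extended} by rewriting \emph{each} of the two extended Witten classes in \eqref{eq:symmetry of extended} as the Chern class of one and the same bundle on a moduli space all of whose twists lie in $\{0,1,\ldots,r-1\}$, where the asserted symmetry is then automatic. Throughout set
\[
N := \frac{r+\sum_{i=1}^n\beta_i}{r},
\]
which by \eqref{eq:rankW} (with $g=0$) is the rank of $(R^1\pi_*\mcS_{r-1,r-1,\beta_1,\ldots,\beta_n})^\vee$ on $\oM_{0,\{r-1,r-1,\beta_1,\ldots,\beta_n\}}^{1/r}$; note also that $N-1=\frac{\sum_i\beta_i}{r}$ is exactly the rank of each extended Witten bundle appearing in \eqref{eq:symmetry of extended}.

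First I would treat the left-hand side. Regarding the $(n+1)$-tuple $(r-1,\beta_1,\ldots,\beta_n)$ as the block of non-negative twists (it has $n+1\ge 2$ entries, so the hypothesis of the lemma is met even when $n=1$), Lemma~\ref{lemma:alternative expression for extended} applies verbatim and gives
\[
e\left((R^1\pi_*\mcS_{-1,r-1,\beta_1,\ldots,\beta_n})^\vee\right)=c_{N-1}\left((R^1\pi_*\mcS_{r-1,r-1,\beta_1,\ldots,\beta_n})^\vee\right),
\]
an identity in $H^*\!\left(\oM_{0,\{r-1,r-1,\beta_1,\ldots,\beta_n\}}^{1/r}\right)=H^*\!\left(\oM_{0,\{-1,r-1,\beta_1,\ldots,\beta_n\}}^{1/r}\right)$.

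Next I would do the same for the right-hand side, where the $-1$ now sits in the second slot. Here I cannot quote the lemma as literally stated, so instead I would rerun its proof with the roles of the sections permuted: one sets up the exact sequence \eqref{eq:vanishingles} using the section $\sigma_2$ and divisor $\Delta_2$ in place of $\sigma_1,\Delta_1$, taking $\mcS=\mcS_{r-1,r-1,\beta_1,\ldots,\beta_n}$ and $\widetilde\mcS=\mcS(\Delta_2)=\mcS_{r-1,-1,\beta_1,\ldots,\beta_n}$. Exactly as in the lemma one has $R^0\pi_*\mcS=R^0\pi_*\widetilde\mcS=0$ and $(\sigma_2^*\widetilde\mcS)^{\otimes r}\cong\O$, so the dualized sequence presents $(R^1\pi_*\mcS_{r-1,r-1,\beta_1,\ldots,\beta_n})^\vee$ as an extension of the line bundle $(\sigma_2^*\widetilde\mcS)^\vee$, of trivial rational first Chern class, by the extended Witten bundle $(R^1\pi_*\mcS_{r-1,-1,\beta_1,\ldots,\beta_n})^\vee$. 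Multiplicativity of the total Chern class then yields
\[
e\left((R^1\pi_*\mcS_{r-1,-1,\beta_1,\ldots,\beta_n})^\vee\right)=c_{N-1}\left((R^1\pi_*\mcS_{r-1,r-1,\beta_1,\ldots,\beta_n})^\vee\right),
\]
which is the very same class on $\oM_{0,\{r-1,r-1,\beta_1,\ldots,\beta_n\}}^{1/r}$ obtained from the left-hand side.

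Finally I would match the two identifications. The equality $\oM_{0,\{-1,r-1,\beta_1,\ldots,\beta_n\}}^{1/r}=\oM_{0,\{r-1,-1,\beta_1,\ldots,\beta_n\}}^{1/r}$ in the statement is, by construction, the composite of the two canonical twisting isomorphisms $S\mapsto S(-[z_1])$ and $S\mapsto S(-[z_2])$ with the common space $\oM_{0,\{r-1,r-1,\beta_1,\ldots,\beta_n\}}^{1/r}$; since both sides of \eqref{eq:symmetry of extended} have been rewritten as the identical class $c_{N-1}\!\left((R^1\pi_*\mcS_{r-1,r-1,\beta_1,\ldots,\beta_n})^\vee\right)$ transported along these isomorphisms, the corollary follows. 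The one step requiring genuine care — and the main, if modest, obstacle — is the third one: verifying that the computation underlying Lemma~\ref{lemma:alternative expression for extended} is truly equivariant under relabeling of the marked points, so that moving the $-1$ twist to the second slot introduces neither a sign nor an extra line-bundle factor in the Chern-class bookkeeping. This is reassuring precisely because in the common space both distinguished markings carry the \emph{same} twist $r-1$, so the bundle $(R^1\pi_*\mcS_{r-1,r-1,\beta_1,\ldots,\beta_n})^\vee$ is manifestly symmetric under their interchange.
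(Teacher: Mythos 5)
Your proposal is correct and is essentially the paper's own argument: the paper's proof is the one-line observation that, by Lemma~\ref{lemma:alternative expression for extended}, both sides of \eqref{eq:symmetry of extended} equal $c_{\rk-1}\bigl((R^1\pi_*\mcS_{r-1,r-1,\beta_1,\ldots,\beta_n})^\vee\bigr)$. Your extra care in rerunning the lemma's proof with $\sigma_2$ in place of $\sigma_1$ for the right-hand side just makes explicit the marked-point symmetry that the paper leaves implicit.
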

\begin{proof}
By Lemma~\ref{lemma:alternative expression for extended}, both sides of~\eqref{eq:symmetry of extended} are equal to $c_{\rk-1}\left((R^1\pi_*\mcS_{r-1,r-1,\beta_1,\ldots,\beta_n})^\vee\right)$.
\end{proof}
\begin{proof}[Proof of Lemma~\ref{lem:decomposition}]
Over $\oM_{\Gamma}^{1/r}$, there are two universal curves: namely, we define $\mcC_{\Gamma}$ by the fiber diagram
\[
\xymatrix{
\mathcal{C}_{\Gamma} \ar[r]\ar[d]_{\pi} & \mathcal{C}\ar[d]^{\pi}\\
\M_{\Gamma}^{1/r} \ar[r]^-{\widetilde{i}_{\Gamma}} & \M_{0,\{\alpha_1,\ldots,\alpha_n\}}^{1/r},}
\]
and we define $\mathcal{C}_{\widetilde{\Gamma}}$ by the fiber diagram
\begin{gather*}
\xymatrix{
\widetilde{\mathcal{C}}\ar[d] & \mathcal{C}_{\widetilde{\Gamma}}\ar[l]\ar[d]^{\widetilde{\pi}}\\
\M_{\widetilde{\Gamma}}^{\ext} & \M_{\Gamma}^{1/r}\ar[l]_-{q},}
\end{gather*}
in which $\widetilde{\mathcal{C}}$ is the universal curve over $\M_{\widetilde{\Gamma}}^{\ext}$, induced by the product description \eqref{eq:Mext}. One can decompose $\mathcal{C}_{\widetilde{\Gamma}} = \mathcal{C}_1 \sqcup \mathcal{C}_2$, and we let $\pi_i = \widetilde{\pi}|_{\mathcal{C}_i}$, for $i=1,2$. Let
\[n: \mathcal{C}_{\widetilde{\Gamma}} \rightarrow \mathcal{C}_{\Gamma}\]
be the universal normalization morphism.

Denote (by a slight abuse of notation) the pullback to $\mathcal{C}_{\Gamma}$ of the universal line bundle on $\mathcal{C}$ by $\mathcal{S}$, and let $\mathcal{S}_i = n^*\mathcal{S}|_{\mathcal{C}_i}$ for $i=1,2$.  There is a normalization exact sequence
\[0 \rightarrow \mathcal{S} \rightarrow n_*n^*\mathcal{S} \rightarrow \mathcal{S}|_{\Delta_e} \rightarrow 0,\]
where $\Delta_e \subset \mathcal{C}_{\Gamma}$ picks out the node that corresponds to the edge $e$ of $\Gamma$.  The associated long exact sequence reads
\begin{equation}
\label{eq:normalization}
0 \rightarrow R^0\pi_{1*}\mathcal{S}_1 \oplus R^0\pi_{2*}\mathcal{S}_2 \rightarrow R^0\pi_*(\mcS|_{\Delta_e})\rightarrow R^1\pi_*\mathcal{S} \rightarrow  R^1\pi_{1*}\mathcal{S}_1 \oplus R^1\pi_{2*}\mathcal{S}_2 \rightarrow 0.
\end{equation}
From here, we break the argument into three cases.

First, suppose that one (hence both) of the half-edges of $e$ is Neveu--Schwarz.  Then $R^0\pi_*(\mcS|_{\Delta_e})=0$, since sections of an orbifold line bundle necessarily vanish at points where the isotropy group acts nontrivially on the fiber.  Thus, we have
\[
R^1\pi_*\mathcal{S} \cong R^1\pi_{1*}\mathcal{S}_1 \oplus R^1\pi_{2*}\mathcal{S}_2
\]
on $\M_{\Gamma}^{1/r}$.  This implies that $\widetilde{i}_{\Gamma}^* c_{W}^{\ext} = q^* (c_{W_1}^{\ext} \boxtimes c_{W_2}^{\ext})$, and the claim follows from the fact that $\deg1 q=1$ \cite[page~1348]{CZ}.

Next, suppose that one (hence both) of the half-edges of $e$ is Ramond.  Assume, without loss of generality, that the vertices~$v_1$ and~$v_2$ of~$\Gamma$ are incident to the legs marked by~$\{1,\ldots,n_1\}$ and $\{n_1+1,\ldots,n\}$, respectively. Consider the universal line bundles $\mcS_{\alpha_1,\ldots,\alpha_{n_1},-1}$ and $\mcS_{-1,\alpha_{n_1+1},\ldots,\alpha_n}$ over $\widetilde{\mcC}$, corresponding to the components $\oM_1$ and $\oM_2$, respectively. The pullbacks, via the map $\mcC_{\tGamma}\to\widetilde{\mcC}$, of these line bundles to $\mcC_1$ and $\mcC_2$, respectively, will be denoted by the same letters. Clearly,
$$
\mcS_1=\mcS_{\alpha_1,\ldots,\alpha_{n_1},-1},\qquad \mcS_2=\mcS_{-1,\alpha_{n_1+1},\ldots,\alpha_n}.
$$
Note that $R^0\pi_*(\mcS|_{\Delta_e}) = \sigma_e^*\mathcal{S}$, where $\sigma_e$ is the section of $\mathcal{C}_{\Gamma}$ with image $\Delta_e$.  Since
\[
(\sigma_e^*\mathcal{S})^{\otimes r} \cong \sigma_e^*\omega_{\pi,\log} \cong \O_{\M_{\Gamma}^{1/r}},
\]
we have $c_1(\sigma_e^*\mathcal{S}) = 0$.

Suppose, additionally, that $\alpha_1,\ldots,\alpha_n\in\{0,\ldots,r-1\}$. Then we have $R^0\pi_{1*}\mathcal{S}_1 = R^0\pi_{2*}\mathcal{S}_2 = 0$ and, by multiplicativity of the total Chern classes in \eqref{eq:normalization}, we find that
\begin{gather}\label{eq:relation between Chern classes}
c_i\left((R^1\pi_*\mcS)^\vee\right)=c_i\left((R^1\pi_{1*}\mcS_{\alpha_1,\ldots,\alpha_{n_1},-1})^\vee\oplus(R^1\pi_{2*}\mcS_{-1,\alpha_{n_1+1},\ldots,\alpha_n})^\vee\right),\quad i\ge 0.
\end{gather}
By the Ramond vanishing, the right-hand side of~\eqref{eq:decomp} is equal to zero. Note that $\rk(R^1\pi_*\mcS)=\rk(R^1\pi_{1*}\mcS_1\oplus R^1\pi_{2*}\mcS_2)+1$, therefore, by~\eqref{eq:relation between Chern classes}, $\widetilde{i}_{\Gamma}^*c_W^{\ext}=e((R^1\pi_*\mcS)^\vee)=0$.

Finally, suppose again that one (hence both) of the half-edges of $e$ is Ramond, but there is a marked point of twist $-1$. Without loss of generality, we can assume that $\alpha_1=-1$. Let us write equation~\eqref{eq:relation between Chern classes} for the $n$-tuple $r-1,\alpha_2,\ldots,\alpha_n$ and for $i=\rk\left(R^1\pi_*\mcS_{r-1,\alpha_2,\ldots,\alpha_n}\right)-1$. Then, by Lemma~\ref{lemma:alternative expression for extended}, the left-hand side is equal to $\ti_\Gamma^*c_W^\ext$. The right-hand side of~\eqref{eq:relation between Chern classes} is equal to
\begin{align*}
&e\left((R^1\pi_{1*}\mcS_{r-1,\alpha_2,\ldots,\alpha_{n_1},-1})^\vee\right)e\left((R^1\pi_{2*}\mcS_{-1,\alpha_{n_1+1},\ldots,\alpha_n})^\vee\right)\stackrel{\text{by Corollary~\ref{corollary:symmetry of extended}}}{=}\\
&\hspace{5cm}=e\left((R^1\pi_{1*}\mcS_{-1,\alpha_2,\ldots,\alpha_{n_1},r-1})^\vee\right)\cdot e\left((R^1\pi_{2*}\mcS_{-1,\alpha_{n_1+1},\ldots,\alpha_n})^\vee\right)=\\
&\hspace{5cm}=q^*(c_{W_1}^\ext\boxtimes c_{W_2}^\ext).
\end{align*}
Hence, $\ti_\Gamma^*c_W^\ext=q^*(c_{W_1}^\ext\boxtimes c_{W_2}^\ext)$ and using again that $\deg1 q=1$ \cite[page~1348]{CZ} we see that the lemma is proved.
\end{proof}

The other property of the extended Witten class that we require is its behavior under pullback via the forgetful map. There is only a forgetful map on the $r$-spin moduli space forgetting marked points whose twist equals zero.  Let
\[\text{For}_{n+1}\colon \M_{0, \{\alpha_1, \ldots, \alpha_n,0\}}^{1/r} \rightarrow \M_{0,\{\alpha_1, \ldots, \alpha_n\}}^{1/r}\]
be the map that forgets $z_{n+1}$ and its orbifold structure and stabilizes the curve $C$ as necessary.

\begin{lemma}
\label{lem:forgetful}
Let $c_W^{\mathrm{ext}}$ be the Witten class for $\M_{0, \{\alpha_1, \ldots, \alpha_n\}}^{1/r}$, and let $\widetilde{c}_W^{\mathrm{ext}}$ be the Witten class for $\M_{0, \{\alpha_1, \ldots, \alpha_n,0\}}^{1/r}$.  Then
\[\widetilde{c}_W^{\mathrm{ext}} = \mathrm{For}_{n+1}^*c_W^{\mathrm{ext}}.\]
\begin{proof}
This follows immediately from the fact that the universal line bundle pulls back under the induced morphism $\widetilde{\text{For}}_{n+1}: \mathcal{C}' \rightarrow \mathcal{C}$ on the universal curves.
\end{proof}
\end{lemma}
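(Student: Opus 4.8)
The plan is to reduce everything to the single geometric input asserted in the one-line proof—that the universal $r$-spin bundle pulls back under the induced map on universal curves—and then to propagate this through the derived pushforward by a fibrewise cohomology-and-base-change argument. Write $\pi\colon\mathcal{C}\to\M_{0,\{\alpha_1,\ldots,\alpha_n\}}^{1/r}$ and $\pi'\colon\mathcal{C}'\to\M_{0,\{\alpha_1,\ldots,\alpha_n,0\}}^{1/r}$ for the two universal curves, with universal line bundles $\mathcal{S}$ and $\mathcal{S}'$, and let $\widetilde{\mathrm{For}}_{n+1}\colon\mathcal{C}'\to\mathcal{C}$ be the lift of $\mathrm{For}_{n+1}$, so that $\pi\circ\widetilde{\mathrm{For}}_{n+1}=\mathrm{For}_{n+1}\circ\pi'$. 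Since $g=0$ and $R^0\pi_*\mathcal{S}=R^0\pi'_*\mathcal{S}'=0$, both $(R^1\pi_*\mathcal{S})^\vee$ and $(R^1\pi'_*\mathcal{S}')^\vee$ are vector bundles, and I will prove the stronger bundle-level statement $R^1\pi'_*\mathcal{S}'\cong\mathrm{For}_{n+1}^*R^1\pi_*\mathcal{S}$; taking duals and Euler classes then yields the lemma at once. Working with $(R^1\pi_*\mathcal{S})^\vee$ rather than $R^0\pi_*(\mathcal{S}^\vee\otimes\omega_\pi)$ has the advantage of avoiding any comparison of relative dualizing sheaves.

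The first step is to establish $\mathcal{S}'\cong\widetilde{\mathrm{For}}_{n+1}^*\mathcal{S}$, and this is where the hypothesis $\alpha_{n+1}=0$ is essential. One checks the defining twisted relation \eqref{eq:twistedrspin}: on $\mathcal{C}'$ the bundle $\mathcal{S}'$ satisfies $(\mathcal{S}')^{\otimes r}\cong\omega_{\pi'}(-\sum_{i=1}^n\alpha_i\Delta_i')$, with no contribution from $z_{n+1}$ precisely because its twist is $0$, and this matches the pullback under $\widetilde{\mathrm{For}}_{n+1}$ of the corresponding relation downstairs once the standard comparison of boundary and section divisors is taken into account. The only delicate locus is where $\mathrm{For}_{n+1}$ actually stabilizes the curve, i.e. where $z_{n+1}$ sits on a rational component that is destabilized; there $\widetilde{\mathrm{For}}_{n+1}$ contracts that bubble, and one must verify that $\mathcal{S}$ pulls back without a twisting correction—again a consequence of the forgotten point carrying twist $0$, so that $\mathcal{S}$ has degree $0$ on the contracted bubble.

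The second step is to deduce $R^1\pi'_*\mathcal{S}'\cong\mathrm{For}_{n+1}^*R^1\pi_*\mathcal{S}$ from $\mathcal{S}'\cong\widetilde{\mathrm{For}}_{n+1}^*\mathcal{S}$. The commutative square furnishes a natural base-change morphism $\theta\colon\mathrm{For}_{n+1}^*R^1\pi_*\mathcal{S}\to R^1\pi'_*\widetilde{\mathrm{For}}_{n+1}^*\mathcal{S}=R^1\pi'_*\mathcal{S}'$, and since both sheaves are locally free it suffices, by cohomology and base change, to check that $\theta$ is an isomorphism on the fibre over each geometric point of $\M_{0,\{\alpha_1,\ldots,\alpha_n,0\}}^{1/r}$. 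Over such a point $\theta$ is the map $H^1(C,S)\to H^1(C',(\widetilde{\mathrm{For}}_{n+1})^*S)$ induced by the stabilization morphism $C'\to C$; because this morphism contracts only trees of rational curves, it satisfies $R(\cdot)_*\mathcal{O}_{C'}=\mathcal{O}_C$, and the projection formula gives $H^1(C',(\widetilde{\mathrm{For}}_{n+1})^*S)\cong H^1(C,S)$, so $\theta$ is an isomorphism. Dualizing and applying $e(-)$ then gives $\widetilde{c}_W^{\mathrm{ext}}=e((R^1\pi'_*\mathcal{S}')^\vee)=e(\mathrm{For}_{n+1}^*(R^1\pi_*\mathcal{S})^\vee)=\mathrm{For}_{n+1}^*c_W^{\mathrm{ext}}$.

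I expect the main obstacle to be the first step: verifying $\mathcal{S}'\cong\widetilde{\mathrm{For}}_{n+1}^*\mathcal{S}$ rigorously requires tracking the orbifold/$r$-spin data—including the isotropy at $z_{n+1}$ that is coarsened by $\mathrm{For}_{n+1}$ and the relation \eqref{eq:twistedrspin}—through the stabilization, and confirming that the twist-$0$ condition is exactly what kills the potential correction term on contracted bubbles. Once this identification is in place, the passage to $R^1\pi_*$ and to Chern classes is routine, resting only on the genus-zero vanishing $R^0\pi_*\mathcal{S}=0$ and the vanishing of $H^1$ of the structure sheaf of a tree of rational curves.
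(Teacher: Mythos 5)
Your proposal is correct and takes essentially the same route as the paper, whose entire proof consists of the assertion you establish in your first step (that the universal line bundle pulls back under the induced morphism on universal curves), followed implicitly by the cohomology-and-base-change argument you spell out. The one caveat is your claim that $\mathcal{S}'$ has degree $0$ on a contracted bubble: when the other special point on that bubble is Ramond the degree need not literally vanish, but $h^0$ and $h^1$ of the restriction to the bubble still do (as the constancy of the rank of $R^1\pi'_*\mathcal{S}'$ forces), so the contraction argument via $Rc_*\mathcal{O}_{C'}=\mathcal{O}_C$ and the projection formula goes through as you describe.
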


\subsection{TRRs and string equation in extended $r$-spin theory}

We are now prepared to state and prove the topological recursion relations satisfied by the extended $r$-spin correlators in genus zero.  These follow from Lemmas~\ref{lem:decomposition} and \ref{lem:forgetful}, by essentially the same argument as given by Jarvis--Kimura--Vaintrob in \cite{JKV}. We denote by $[n]$ the set $\{1,2,\ldots,n\}$.

\begin{lemma}\label{lem:trr}
For any $i$ with $d_i>0$ and any $j \neq k \in [n] \setminus \{i\}$, we have
\begin{equation}\label{eq:general_TRR}
\<\prod_{l\in [n]}\tau_{d_l}^{\alpha_l}\>^{\frac{1}{r},\mathrm{ext}}_0 =
\sum_{\substack{I\coprod J = [n]\setminus\{i\}\\j,k\in J}}\sum_{\alpha=-1}^{r-1}\left\langle\tau_0^{\alpha}\tau_{d_i-1}^{\alpha_i}\prod_{l\in I}\tau_{d_l}^{\alpha_l}\right\rangle^{\frac{1}{r},\mathrm{ext}}_0
\left\langle\tau_0^{r-2-\alpha}\prod_{l\in J}\tau_{d_l}^{\alpha_l}\right\rangle^{\frac{1}{r},\mathrm{ext}}_0 .
\end{equation}

In particular, we have the following equations, where $K \subset [n]$ is the set of marked points whose twist is $r-1$ and without loss of generality we can assume that $\alpha_1=-1$:

\begin{enumerate}
\item (Neveu--Schwarz TRR) For any $i\in [n]\backslash(K \cup \{1\})$ with $d_i > 0$ and any $j\in [n]\setminus \{i,1\}$ we have
\begin{align}\label{eq:NS_TRR}
\left\langle\prod_{l\in [n]}\tau_{d_l}^{\alpha_l}\right\rangle^{\frac{1}{r},\mathrm{ext}}_0=&\sum_{\substack{I\coprod J = [n]\setminus\{i\}\\\{1,j\}\cup K\subseteq J}}\sum_{\alpha=0}^{r-2}\left\langle\tau_0^{\alpha}\tau_{d_i-1}^{\alpha_i}\prod_{l\in I}\tau_{d_l}^{\alpha_l}\right\rangle^{\frac{1}{r}}_0\left\langle\tau_0^{r-2-\alpha}\prod_{l\in J}\tau_{d_l}^{\alpha_l}\right\rangle^{\frac{1}{r},\mathrm{ext}}_0+\\
&+\sum_{\substack{I\coprod J = [n]\setminus\{i\}\\1,j\in J}}\left\langle\tau_0^{-1}\tau_{d_i-1}^{\alpha_i}\prod_{l\in I}\tau_{d_l}^{\alpha_l}\right\rangle^{\frac{1}{r},\mathrm{ext}}_0\left\langle\tau_0^{r-1}\prod_{l\in J}\tau_{d_l}^{\alpha_l}\right\rangle^{\frac{1}{r},\mathrm{ext}}_0.\notag
\end{align}

\item (Ramond TRR) For any $i \in K$ with $d_i > 0$ and any $j \in [n]\setminus \{1,i\}$ we have
\begin{gather}\label{eq:Ramond_TRR}\left\langle\prod_{l\in [n]}\tau_{d_l}^{\alpha_l}\right\rangle^{\frac{1}{r},\mathrm{ext}}_0 =\sum_{\substack{I\coprod J = [n]\setminus\{i\}\\1,j\in J}}\left\langle\tau_0^{-1}\tau_{d_i-1}^{\alpha_i}\prod_{l\in I}\tau_{d_l}^{\alpha_l}\right\rangle^{\frac{1}{r},\mathrm{ext}}_0\left\langle\tau_0^{r-1}\prod_{l\in J}\tau_{d_l}^{\alpha_l}\right\rangle^{\frac{1}{r},\mathrm{ext}}_0.\end{gather}

\item ($-1$ TRR) Suppose that $d_1>0$. Then for any $j \neq k \in [n]\setminus \{1\}$ we have
\begin{align}\label{eq:-1_TRR}
\left\langle\prod_{l\in [n]}\tau_{d_l}^{\alpha_l}\right\rangle^{\frac{1}{r},\mathrm{ext}}_0
=&\sum_{\substack{I\coprod J = [n]\setminus\{1\}\\j,k\in J,\,K\subset I}}\sum_{\alpha=0}^{r-2}\left\langle\tau_0^{\alpha}\tau^{-1}_{d_1-1}\prod_{l\in I}\tau_{d_l}^{\alpha_l}\right\rangle^{\frac{1}{r},\mathrm{ext}}_0\<\tau_0^{r-2-\alpha}\prod_{l\in J}\tau_{d_l}^{\alpha_l}\>^{\frac{1}{r}}_0+\\
&+\sum_{\substack{I\coprod J = [n]\setminus\{1\}\\j,k\in J}}\left\langle\tau_0^{r-1}\tau^{-1}_{d_1-1}\prod_{l\in I}\tau_{d_l}^{\alpha_l}\right\rangle^{\frac{1}{r},\mathrm{ext}}_0\<\tau^{-1}_0\prod_{l\in J}\tau_{d_l}^{\alpha_l}\>^{\frac{1}{r},\mathrm{ext}}_0.\notag
\end{align}

\end{enumerate}
\begin{proof}
Observe that the right-hand side of \eqref{eq:general_TRR} is always defined.  Indeed, while it could be the case that one of the two correlators in a summand has two insertions of twist $-1$, it is always multiplied by a correlator with at least one insertion of twist $r-1$ and no insertions of twist $-1$; thus, by Ramond vanishing in the usual closed theory, that summand of \eqref{eq:general_TRR} vanishes.

The proof of \eqref{eq:general_TRR} follows \cite[Section 4.2]{JKV}.  Namely, on $\M_{0,n}$, we apply the relation
\[\psi_i = \sum_{\Gamma} i_{\Gamma*}(1),\]
where the sum is over all single-edged graphs on which tail $i$ is separated by the edge from tails~$j$ and~$k$ (see, for example, \cite{Zvonkine}).  From here, we use the commutative diagram
\begin{equation}
\label{eq:diagram}
\xymatrix{
& \M_{\Gamma}^{1/r} \ar[r]^{\tilde i_{\Gamma}}\ar[dd]_{\widetilde{\rho}}\ar[dl]_{q} & \M_{0,\{\alpha_1,\ldots,\alpha_n\}}^{1/r}\ar[dd]_{\rho} \\
\M_{\tilde\Gamma}^{\ext}\ar[dr]_{\rho'} & & \\
& \M_{\Gamma}\ar[r]^{i_{\Gamma}} & \M_{0,n}
}
\end{equation}
for each single-edged graph $\Gamma$. First of all, note that $i_{\Gamma}^*\rho_* = r \cdot \widetilde{\rho}_* \widetilde{i}_{\Gamma}^*$, which follows from the fact that $\deg1\rho=\frac{1}{r}$ and $\deg1 \widetilde{\rho}=\frac{1}{r^2}$. Combining this property with Lemma~\ref{lem:decomposition} shows that
\begin{align*}
r\cdot \rho_*(\psi_i \cap c_W^{\ext}) &= r\sum_{\Gamma} \rho_*\left( \rho^*i_{\Gamma*}(1) \cap c_W^{\ext}\right)=\\
&=r\sum_{\Gamma} i_{\Gamma*}(1) \cap \rho_*(c_W^{\ext})=\\
&=r\sum_{\Gamma} i_{\Gamma*}\left( i_{\Gamma}^* \rho_*(c_W^{\ext})\right)=\\
&=r^2\sum_{\Gamma} i_{\Gamma*}\widetilde{\rho}_*\widetilde{i}_{\Gamma}^*(c_W^{\ext})=\\
&=r^2\sum_{\Gamma} i_{\Gamma*}\rho'_*q_*\widetilde{i}_{\Gamma}^* (c_W^{\ext})=\\
&=r^2\sum_{\Gamma} i_{\Gamma*}\rho'_*\left(c_{W_1}^{\ext} \boxtimes c_{W_2}^{\ext}\right).
\end{align*}
Finally, multiplying this equation by the remaining $\psi$-classes and integrating proves the claim.  The other items are specializations of the general case, using that the extended theory agrees with the usual closed theory in the absence of an insertion of twist $-1$, and that in the usual closed theory Ramond vanishing holds.
\end{proof}
\end{lemma}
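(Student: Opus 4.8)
The plan is to follow the classical Jarvis--Kimura--Vaintrob strategy for topological recursion relations, adapting it to the extended setting by feeding in Lemmas~\ref{lem:decomposition} and~\ref{lem:forgetful} as the key geometric inputs. The whole argument hinges on pulling back the standard $\psi$-class relation from $\oM_{0,n}$ to the $r$-spin moduli space and then decomposing the Witten class along the resulting boundary nodes. I will first establish the master relation~\eqref{eq:general_TRR} and then obtain the three itemized cases as specializations.

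First I would recall the well-known genus-zero relation on $\oM_{0,n}$, namely $\psi_i = \sum_\Gamma i_{\Gamma*}(1)$, where the sum runs over single-edged stable graphs in which the leg $i$ lies on one vertex while two chosen legs $j,k$ lie on the other. Next I would pull this relation back along $\rho\colon\oM^{1/r}_{0,\{\alpha_1,\ldots,\alpha_n\}}\to\oM_{0,n}$, cap with $c_W^\ext$, and push forward. The crucial bookkeeping fact is the degree comparison $i_\Gamma^*\rho_* = r\cdot\widetilde\rho_*\widetilde i_\Gamma^*$, which comes from $\deg\rho=\tfrac1r$ and $\deg\widetilde\rho=\tfrac1{r^2}$ together with the commutative diagram~\eqref{eq:diagram}; this is precisely what produces the correct combinatorial normalization of the correlators. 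Combining this with the projection formula and with the decomposition $q_*\widetilde i_\Gamma^* c_W^\ext = c_{W_1}^\ext\boxtimes c_{W_2}^\ext$ from Lemma~\ref{lem:decomposition} rewrites $r\cdot\rho_*(\psi_i\cap c_W^\ext)$ as $r^2\sum_\Gamma i_{\Gamma*}\rho'_*(c_{W_1}^\ext\boxtimes c_{W_2}^\ext)$. Summing over the twist $\alpha$ at one half-node (so that the complementary half-node carries twist $r-2-\alpha$, as dictated by~\eqref{eq:twistedrspin} and the codimension count~\eqref{eq:rankW}), then capping with the remaining $\psi$-classes and integrating, yields~\eqref{eq:general_TRR}. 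I would emphasize, as the authors already flag in the proof, that every summand on the right-hand side is well-defined: a summand whose split could place two $-1$-insertions on one vertex is always paired with a closed correlator carrying a Ramond insertion, which vanishes by Ramond vanishing in the \emph{non}-extended theory, so no ill-defined doubly-extended correlator ever contributes.

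To extract the three itemized equations I would organize the sum in~\eqref{eq:general_TRR} according to where the distinguished legs land and according to the twist $\alpha\in\{-1,0,\ldots,r-1\}$ at the separating node. Using that the extended theory coincides with the ordinary closed theory whenever no $-1$-insertion is present, each factor lacking a $-1$ gets its ``$\ext$'' superscript dropped. For the Neveu--Schwarz TRR one takes $i$ away from the $-1$-leg and from $K$, splitting the $\alpha$-sum into the range $\{0,\ldots,r-2\}$ (giving the first line) and the single extremal term $\alpha=-1$, which forces twist $r-1$ on the opposite node and reassembles the $-1$-leg with one extended factor (the second line). The Ramond TRR follows by placing $i\in K$ and invoking Ramond vanishing to kill every $\alpha\neq-1$ summand, leaving only the single surviving term. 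The $-1$ TRR is the mirror case, taking $i=1$ (the $-1$-leg itself): here the $\alpha\in\{0,\ldots,r-2\}$ terms pair an extended factor containing $\tau^{-1}_{d_1-1}$ with a \emph{closed} factor, while the $\alpha=r-1$ term produces the extended $\tau_0^{-1}$ factor in the second line.

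The main obstacle is not the TRR machinery itself, which is formal once the two lemmas are in hand, but rather the careful case-analysis showing that the general relation~\eqref{eq:general_TRR} specializes correctly and that all boundary terms are legitimate extended correlators. Concretely, the delicate point is tracking which half-node receives twist $-1$ versus $r-1$ under condition (4) of the defining rules for $\alpha'$, and verifying that exactly one $-1$-leg survives on each side of every cut that actually contributes. Once Ramond vanishing is used to discard the spurious configurations, the remaining combinatorial reindexing into the three stated forms is routine, so I would present it compactly rather than exhaustively.
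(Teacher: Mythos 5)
Your proposal reproduces the paper's argument essentially verbatim: the same $\psi$-class relation on $\M_{0,n}$, the same commutative diagram with the degree comparison $i_\Gamma^*\rho_*=r\cdot\widetilde\rho_*\widetilde i_\Gamma^*$, the same invocation of Lemma~\ref{lem:decomposition}, the same Ramond-vanishing justification that the right-hand side of \eqref{eq:general_TRR} is well-defined, and the same specialization to the three itemized cases. Your slightly more explicit bookkeeping of how the $\alpha$-sum splits in each specialization is consistent with what the paper leaves implicit.
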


The string equation in the closed extended $r$-spin theory is exactly as usual:

\begin{lemma}
We have
\label{lem:str}
\[
\left\langle\tau_0^0\prod_{i\in [n]}\tau_{d_i}^{\alpha_i}\right\rangle^{\frac{1}{r},\mathrm{ext}}_0=
\begin{cases}
\sum_{\substack{i\in [n]\\d_i>0}}\left\langle\tau_{d_i-1}^{\alpha_i}\prod_{j\neq i}\tau_{d_j}^{\alpha_j}\right\rangle^{\frac{1}{r},\mathrm{ext}}_0,&\text{if $n\ge 3$},\\
\delta_{d_1,0}\delta_{d_2,0}\delta_{\alpha_1+\alpha_2,r-2},&\text{if $n=2$}.
\end{cases}
\]
\end{lemma}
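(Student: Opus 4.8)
The plan is to split the argument according to whether $n=2$ or $n\ge 3$. The case $n\ge 3$ is the substance and follows the classical template for the string equation, using the forgetful morphism together with Lemma~\ref{lem:forgetful} and the comparison formula for $\psi$-classes; the case $n=2$ is a direct dimension count on a three-pointed moduli space.

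For $n=2$, the correlator is $r\int_{\M^{1/r}_{0,\{0,\alpha_1,\alpha_2\}}}e\left(\mathcal{W}\oplus\mathbb{L}_1^{\oplus d_1}\oplus\mathbb{L}_2^{\oplus d_2}\right)$, and the moduli space has dimension $0$. Hence the integral vanishes unless the integrand has degree zero, which by \eqref{eq:rankW} forces $d_1=d_2=0$ together with $\rk\mathcal{W}=\frac{-(r-2)+\alpha_1+\alpha_2}{r}=0$, i.e. $\alpha_1+\alpha_2=r-2$. When these hold, $e(\mathcal{W})=1$ and $\int_{\M^{1/r}_{0,3}}1=\frac1r$, because $\M^{1/r}_{0,3}$ is a $\mu_r$-gerbe over a point; the prefactor $r$ then yields $1$. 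This is exactly $\delta_{d_1,0}\delta_{d_2,0}\delta_{\alpha_1+\alpha_2,r-2}$.

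For $n\ge 3$, set $\pi:=\mathrm{For}_{n+1}\colon\M^{1/r}_{0,\{\alpha_1,\ldots,\alpha_n,0\}}\to\M^{1/r}_{0,\{\alpha_1,\ldots,\alpha_n\}}$, which exists precisely because the new insertion $\tau_0^0$ has twist $0$. By Lemma~\ref{lem:forgetful} we have $e(\mathcal{W}')=\pi^*e(\mathcal{W})$, and since each $\mathbb{L}_i$ is pulled back from the coarse curve, the usual comparison of cotangent lines gives $\psi_i=\pi^*\psi_i+D_i$, where $D_i$ is the boundary divisor on which $z_i$ and $z_{n+1}$ lie on a common rational bubble. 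Using $\psi_i\cdot D_i=0$ (the bubble is a three-pointed rational curve, so its cotangent line at $z_i$ is trivial) and $D_i\cdot D_j=0$ for $i\neq j$ (these divisors are disjoint), one gets $\psi_i^{d_i}=(\pi^*\psi_i)^{d_i}+(\pi^*\psi_i)^{d_i-1}D_i$ for $d_i\ge 1$, so that after multiplying out, with all mixed terms $D_iD_j$ ($i\neq j$) vanishing,
\[
e(\mathcal{W}')\prod_{i=1}^n\psi_i^{d_i}=\pi^*\!\left(e(\mathcal{W})\prod_{i=1}^n\psi_i^{d_i}\right)+\sum_{i:\,d_i>0}D_i\cdot\pi^*\!\left(e(\mathcal{W})\,\psi_i^{d_i-1}\!\!\prod_{j\neq i}\psi_j^{d_j}\right).
\]
Integrating over $\M^{1/r}_{0,\{\alpha_1,\ldots,\alpha_n,0\}}$, the first term vanishes: it is a pullback, and $\pi_*\pi^*(-)=(-)\cdot\pi_*(1)=0$ because $\pi$ has one-dimensional fibers.

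It remains to evaluate each term $r\int D_i\cdot\pi^*\gamma_i$ with $\gamma_i=e(\mathcal{W})\psi_i^{d_i-1}\prod_{j\neq i}\psi_j^{d_j}$. Restricting to $D_i$ and pushing forward, this equals $r\int_{D_i}(\pi|_{D_i})^*\gamma_i$, where $\pi|_{D_i}$ contracts the bubble and thereby sends $D_i$ onto $\M^{1/r}_{0,\{\alpha_1,\ldots,\alpha_n\}}$, restoring the node as the marked point $z_i$ with its original twist $\alpha_i$ (consistently, the genus-zero constraint forces node twist $r-2-\alpha_i$ on the bubble and hence $\alpha_i$ on the main component). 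Under this identification $\pi^*\psi_i$ restricts to the $\psi$-class at $z_i$ and $\pi^*e(\mathcal{W})$ to $e(\mathcal{W})$, so the $i$-th term becomes $r\int_{\M^{1/r}_{0,\{\alpha_1,\ldots,\alpha_n\}}}e(\mathcal{W})\psi_i^{d_i-1}\prod_{j\neq i}\psi_j^{d_j}=\langle\tau_{d_i-1}^{\alpha_i}\prod_{j\neq i}\tau_{d_j}^{\alpha_j}\rangle^{\frac1r,\ext}_0$, and summing over $i$ with $d_i>0$ gives the claim. The step needing the most care is this last one: one must verify that $\pi|_{D_i}$ is an isomorphism (equivalently of degree $1$), so that the single prefactor $r$ survives without an extra power of $r$. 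As in the coarse case, forgetting a twist-$0$ point and contracting the bubble is canonical and induces the identity on the $\mu_r$ of spin automorphisms; this is the analogue of the degree computations $\deg\rho=\frac1r$, $\deg\widetilde\rho=\frac1{r^2}$, $\deg q=1$ used in the proof of Lemma~\ref{lem:trr}, and it is the place where the normalization must be tracked precisely.
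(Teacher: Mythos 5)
Your proposal is correct and is precisely the geometric route the paper itself indicates (its one-line proof says the string equation follows ``by mimicking the proof of the ordinary string equation on $\M_{0,n}$ and applying Lemma~\ref{lem:forgetful}''), with the details of the $\psi$-class comparison, the $n=2$ dimension count, and the degree-one normalization of $\pi|_{D_i}$ filled in. You correctly flag the only delicate point (tracking the orbifold degrees so no stray factor of $r$ appears), which is exactly the normalization issue the paper elides.
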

\begin{proof}
The string equation can either be deduced algebraically from the topological recursions above, or geometrically, by mimicking the proof of the ordinary string equation on $\M_{0,n}$ and applying Lemma~\ref{lem:forgetful}.
\end{proof}

\subsection{Base cases}

In addition to the above relations, the proof that the extended $r$-spin correlators satisfy the equations for the wave function of the Gelfand--Dickey hierarchy requires two base cases.  We collect these simple correlators in the following lemma.

\begin{lemma}\label{lemma:small closed extended correlators}
We have
\[\<\tau^{-1}_0 \tau^{1}_0 \tau^{r-2}_0\>^{1/r,\mathrm{ext}}_0 = 1\]
and
\[\<\tau^{-1}_0 \tau^{1}_0 \tau^{r-1}_0 \tau^{r-1}_0\>^{1/r,\mathrm{ext}}_0 = -\frac{1}{r}.\]
\end{lemma}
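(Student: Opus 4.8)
The plan is to treat the two correlators separately, reducing each to an intersection computation on a moduli space of dimension $0$ or $1$.

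For the first correlator the twists are $(-1,1,r-2)$, so by the rank formula \eqref{eq:rankW} the extended Witten bundle $\mathcal{W}^{\ext}$ has rank $\tfrac{-(r-2)+(-1+1+(r-2))}{r}=0$. Equivalently, under the identification $\oM^{1/r}_{0,\{-1,1,r-2\}}=\oM^{1/r}_{0,\{r-1,1,r-2\}}$, Lemma~\ref{lemma:alternative expression for extended} gives $c^{\ext}_W=c_{\rk-1}\big((R^1\pi_*\mcS_{r-1,1,r-2})^\vee\big)=c_0=1$, since here $\rk=\tfrac{1+1+(r-2)}{r}=1$. Thus $\<\tau^{-1}_0\tau^1_0\tau^{r-2}_0\>^{\frac1r,\ext}_0=r\int_{\oM^{1/r}_{0,\{r-1,1,r-2\}}}1$. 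As $\overline{\mathcal{M}}_{0,3}$ is a point and the map to it has degree $\tfrac1r$ (the genus-zero spin structure is unique, with automorphism group $\mu_r$), this integral equals $\tfrac1r$, and multiplying by $r$ yields $1$.

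For the second correlator the twists are $(-1,1,r-1,r-1)$, and now \eqref{eq:rankW} gives rank $\tfrac{-(r-2)+(2r-2)}{r}=1$: the extended Witten bundle $\mathcal{W}^{\ext}=(R^1\pi_*\mcS_{-1,1,r-1,r-1})^\vee$ is a \emph{line} bundle on the one-dimensional stack $\oM^{1/r}_{0,\{-1,1,r-1,r-1\}}$, so $c^{\ext}_W=c_1(\mathcal{W}^{\ext})$ and the correlator equals $r\int c_1(\mathcal{W}^{\ext})=r\deg\mathcal{W}^{\ext}$. (Equivalently, via Lemma~\ref{lemma:alternative expression for extended} one may compute $c_1$ of the rank-$2$ bundle $(R^1\pi_*\mcS_{r-1,1,r-1,r-1})^\vee$.) I would compute $\deg\mathcal{W}^{\ext}$ by Grothendieck--Riemann--Roch: since $R^0\pi_*\mcS=0$, one has $c_1(\mathcal{W}^{\ext})=\mathrm{ch}_1(R\pi_*\mcS)$, and Chiodo's genus-zero formula \cite{ChiodoWitten} expresses this as $A\,\kappa_1+\sum_i B_i\,\psi_i+(\text{boundary terms})$, with $A,B_i$ given in terms of the Bernoulli polynomial $B_2$ evaluated at the twists $\alpha_i/r$. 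Integrating over $\oM^{1/r}_{0,4}$, which maps with degree $\tfrac1r$ to $\overline{\mathcal{M}}_{0,4}\cong\mathbb{P}^1$ where $\int\kappa_1=\int\psi_i=1$ and each boundary point has degree $1$, collapses everything to a single rational number, which I expect to be $-\tfrac1r$.

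The main obstacle is the boundary bookkeeping, and I would first record \emph{why} a recursive shortcut is unavailable: since all descendent indices vanish, every attempt to apply Lemma~\ref{lem:trr} or Lemma~\ref{lem:str} to $\<\tau^{-1}_0\tau^1_0\tau^{r-1}_0\tau^{r-1}_0\>$ (or to a four-point correlator degenerating to it) produces $0=0$, because the complementary factor is always either a two-point correlator or a three-point correlator carrying both a $-1$ and an $r-1$ insertion, whose rank $\tfrac{1}{r}$ is non-integral and hence whose value is zero. So this value is a genuine initial condition requiring a geometric computation. Within the GRR computation the delicate part is the node contribution to $\mathrm{ch}_1$ at the three boundary points of $\overline{\mathcal{M}}_{0,4}$—where, by the splitting analysis in the proof of Lemma~\ref{lem:decomposition}, the spin structure degenerates with prescribed node twists—together with the orbifold normalization factors; these are exactly what produce the sign and the factor $\tfrac1r$. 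As a cross-check I would recompute $\deg\mathcal{W}^{\ext}$ by hand on an explicit model of the universal curve over $\overline{\mathcal{M}}_{0,4}\cong\mathbb{P}^1$, constructing $\mcS$ with the required twists and evaluating $R^1\pi_*$ fiberwise.
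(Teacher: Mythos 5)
Your treatment of the first correlator is correct and is exactly the paper's argument: the rank of the extended Witten bundle is zero, the moduli space is $B\Z_r$ (degree $\tfrac1r$ over the point $\oM_{0,3}$), and the prefactor $r$ gives $1$. Your remark that the second correlator cannot be obtained from the recursions and is a genuine initial condition is also consistent with how the paper is organized.

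For the second correlator your approach coincides with the paper's (Chiodo's Grothendieck--Riemann--Roch formula applied to $\mathrm{ch}_1(R\pi_*\mcS)$ on the one-dimensional space $\oM^{1/r}_{0,\{-1,1,r-1,r-1\}}$), but the proof is not actually completed: the value $-\tfrac1r$ is announced as an expectation rather than derived. The entire content of this step is the explicit evaluation, and the inputs you leave unspecified are precisely where the answer comes from. Concretely, Chiodo's formula here reads
\[
c_W^{\ext} = \frac{B_2\left(\tfrac{1}{r}\right)}{2} \kappa_1 - \frac{B_2(0)}{2} \psi_1 - \frac{B_2\left(\tfrac{2}{r}\right)}{2} \psi_2 - \frac{B_2(1)}{2}\psi_3 - \frac{B_2(1)}{2} \psi_4 + 3\, \frac{rB_2\left(\tfrac{1}{r}\right)}{2}\,\widetilde{i}_{\Gamma*}(1),
\]
and the two points you would need to nail down are: (i) the arguments of $B_2$ at the four markings are determined by the twists of the \emph{Ramond-shifted} bundle (note the $\tfrac1r$, not $\tfrac{r-1}{r}$ or $-\tfrac1r$, at the first marking), and the node term carries the twist $\tfrac1r$ together with an extra factor of $r$; and (ii) the boundary classes satisfy $\int_{\oM^{1/r}_{0,4}} \widetilde{i}_{\Gamma*}(1) = \tfrac{1}{r^2}$, not $\tfrac1r$, because of the additional $\Z_r$ of ghost automorphisms at the node --- so your statement that ``each boundary point has degree $1$'' downstairs does not directly give the contribution upstairs. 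With $\int\kappa_1=\int\psi_i=\tfrac1r$ and the above, the bracket collapses to $\tfrac{1}{2r}\bigl(4B_2(\tfrac1r)-B_2(0)-B_2(\tfrac2r)-2B_2(1)\bigr)=-\tfrac{1}{r^2}$, whence the correlator is $-\tfrac1r$. As it stands, your write-up correctly identifies these as ``the delicate part'' but does not resolve them, so the second half of the lemma remains unproved.
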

\begin{proof}
In the first case, the moduli space is isomorphic to $B\Z_r$ and Witten's class has rank zero, so the claim is immediate.

In the second case, Witten's bundle has rank one, so
\[c_W^{\ext} = c_1((R^1\pi_*\mathcal{S})^{\vee}) = -c_1(R^1\pi_*\mathcal{S}) = -\text{ch}_1(R^1\pi_*\mathcal{S}) = \text{ch}_1(R\pi_*\mathcal{S}).\]
The latter can be calculated via Chiodo's Grothendieck--Riemann--Roch formula \cite{ChiodoTowards}.  In our situation, Chiodo's formula reads:
\begin{gather*}
c_W^{\ext} = \frac{B_2\left(\frac{1}{r}\right)}{2} \kappa_1 - \frac{B_2(0)}{2} \psi_1 - \frac{B_2\left(\frac{2}{r}\right)}{2} \psi_2 - \frac{B_2(1)}{2}\psi_3 - \frac{B_2(1)}{2} \psi_4 + 3 \frac{rB_2\left(\frac{1}{r}\right)}{2}\widetilde{i}_{\Gamma*}(1),
\end{gather*}
where
\[B_2(x) = x^2 -x + \frac{1}{6}\]
is the second Bernoulli polynomial and $\Gamma$ is any of the three one-edged graphs on $\M_{0,\{-1,1,r-1,r-1\}}^{1/r}$, all of which yield the same divisor $\widetilde{i}_{\Gamma*}(1)$. From here, the claim is immediate from the fact that
\[\int_{\M_{0,\{-1,1,r-1,r-1\}}^{1/r}} \kappa_1 = \int_{\M_{0,\{-1,1,r-1,r-1\}}^{1/r}} \psi_i = \frac{1}{r},\]
for each $i$, and
\[\int_{\M_{0,\{-1,1,r-1,r-1\}}^{1/r}} \widetilde{i}_{\Gamma*}(1) = \frac{1}{r^2},\]
in which the last integral is a consequence of the $\Z_r$ scaling and the additional $\Z_r$ ghost automorphisms on a nodal $r$-spin curve.
\end{proof}


\section{Closed extended theory: algebra}\label{sec:alg}

In this section we prove the main result of the paper, Theorem~\ref{theorem:closed extended and wave}, which describes the function $F^{\frac{1}{r},\ext}_0$ in terms of the $r$-th Gelfand-Dickey hierarchy.

Sections~\ref{subsection:KP hierarchy} and~\ref{subsection:Gelfand-Dickey reduction} contain the necessary background information on the KP hierarchy, its Gelfand--Dickey reduction, and their relation to the closed $r$-spin partition function. Then, in Section~\ref{subsection:special solution}, we consider a special solution of the system of differential equations for the wave function of the $r$-GD hierarchy and discuss its main properties. In Section~\ref{subsection:closed extended and special}, we prove Theorem~\ref{theorem:closed extended and wave}: the function $F^{\frac{1}{r},\ext}_0$ coincides with the genus-zero part of the special solution. As a consequence of this result, in Section~\ref{subsection:Lax operator and closed extended} we obtain a simple interpretation of the genus-zero part of the Lax operator of the $r$-GD hierarchy in terms of the potential~$F^{\frac{1}{r},\ext}_0$. Finally, in Section~\ref{subsection:main conjecture}, we propose a conjecture about the structure of the closed extended $r$-spin correlators in all genera.

\subsection{Brief review of the KP hierarchy}\label{subsection:KP hierarchy}

The material of this section is borrowed from the book~\cite{Dic03}.

Consider formal variables $T_i$ for $i\ge 1$. A {\it pseudo-differential operator}~$A$ is a Laurent series
$$
A=\sum_{n=-\infty}^m a_n(T_*,\eps)\d_x^n,
$$
where $m\in \mathbb{Z}$, $\d_x$ is considered as a formal variable, and $a_n(T_*,\eps)$ are formal power series in the variables $T_i$ with coefficients that are complex Laurent polynomials in $\eps$:
$$
a_n(T_*,\eps)\in\mbC[\eps,\eps^{-1}][[T_1,T_2,\ldots]].
$$
The non-negative and negative degree parts of the pseudo-differential operator $A$ are defined by
\begin{gather*}
A_+:=\sum_{n=0}^m a_n\d_x^n \quad\text{and}\quad A_-:=A-A_+,
\end{gather*}
and residue of the operator $A$ is defined by
$$
\res A:=a_{-1}.
$$
The Laurent series
$$
\widehat A(T_*,\eps,z):=\sum_{n=-\infty}^m a_n(T_*,\eps)z^n,
$$
in which $z$ is a formal variable, is called the {\it symbol} of the operator~$A$.

The space of pseudo-differential operators is endowed with a structure of a non-commutative associative algebra, in which the multiplication is defined by the formula
\begin{gather*}
\d_x^k\circ f:=\sum_{l=0}^\infty\frac{k(k-1)\ldots(k-l+1)}{l!}\frac{\d^lf}{\d x^l}\d_x^{k-l},
\end{gather*}
where $k\in\Z$, $f\in\mbC[\eps,\eps^{-1}][[T_1,T_2,\ldots]]$, and the variable $x$ is identified with $T_1$.  For any $r\ge 2$ and any pseudo-differential operator~$A$ of the form
$$
A=\d_x^r+\sum_{n=1}^\infty a_n\d_x^{r-n},
$$
there exists a unique pseudo-differential operator $A^{\frac{1}{r}}$ of the form
$$
A^{\frac{1}{r}}=\d_x+\sum_{n=0}^\infty \widetilde{a}_n\d_x^{-n},
$$
such that $\left(A^{\frac{1}{r}}\right)^r=A$.

Consider the pseudo-differential operator
$$
\mcL=\d_x+\sum_{i\ge 1}u_i\d_x^{-i},\quad u_i\in\mbC[\eps,\eps^{-1}][[T_1,T_2,\ldots]].
$$
The {\it KP hierarchy} is the following system of partial differential equations for the power series~$u_i$:
\begin{gather}\label{eq:KP hierarchy}
\frac{\d\mcL}{\d T_n}=\eps^{n-1}\left[\left(\mcL^n\right)_+,\mcL\right],\quad n\ge 1.
\end{gather}
For $n=1$, the equation is equivalent to
$$
\frac{\d u_i}{\d T_1}=\frac{\d u_i}{\d x},\quad i\ge 1,
$$
compatible with our identification of $x$ with $T_1$.

\begin{rmk}
The factor $\eps^{n-1}$ is not included usually in the definition of the KP hierarchy~\eqref{eq:KP hierarchy}. This rescaling is necessary, if we want to describe the function $\exp\left(F^{\frac{1}{r},c}(T_*,\eps)\right)$ as a tau-function of the KP hierarchy.
\end{rmk}

Suppose an operator $\mcL$ satisfies the system~\eqref{eq:KP hierarchy}. Then there exists a pseudo-differential operator~$P$ of the form
\begin{gather}\label{dressing operator}
P=1+\sum_{n\ge 1}p_n(T_*,\eps)\d_x^{-n},
\end{gather}
satisfying $\mcL=P\circ\d_x\circ P^{-1}$ and
\begin{gather}\label{eq:Sato-Wilson equations}
\frac{\d P}{\d T_n}=-\eps^{n-1}\left(\mcL^n\right)_-\circ P,\quad n\ge 1.
\end{gather}
The operator $P$ is called the {\it dressing operator}.

We can now introduce the notion of a tau-function. Denote by $G_z$ the shift operator, which acts on a power series $f\in\mbC[\eps,\eps^{-1}][[T_1,T_2,\ldots]]$ as follows:
\begin{gather*}
G_z(f)(T_1,T_2,T_3,\ldots):=f\left(T_1-\frac{1}{z},T_2-\frac{1}{2\eps z^2},T_3-\frac{1}{3\eps^2 z^3},\ldots\right).
\end{gather*}
Let $P=1+\sum_{n\ge 1}p_n(T_*,\eps)\d_x^{-n}$ be the dressing operator of some operator $\mcL$ satisfying the KP hierarchy~\eqref{eq:KP hierarchy}. Then there exists a series $\tau\in\mbC[\eps,\eps^{-1}][[T_1,T_2,T_3,\ldots]]$ with constant term $\left.\tau\right|_{T_i=0}=1$ for which
$$
\widehat P=\frac{G_z(\tau)}{\tau}.
$$
The series $\tau$ is called a {\it tau-function} of the KP hierarchy. The operator $\mcL$ can be reconstructed from the tau-function $\tau$ by the following formula:
$$
\res\mcL^n=\eps^{1-n}\frac{\d^2\log\tau}{\d T_1\d T_n},\quad n\ge 1.
$$

Another important object associated to a solution of the KP hierarchy is the {\it wave function} (also called the Baker--Akhiezer function). Let $P$ be the dressing operator of some operator $\mcL$ satisfying the KP hierarchy~\eqref{eq:KP hierarchy} and let $\tau$ be the tau-function. Let
$$
\xi(T_*,\eps,z):=\sum_{k\ge 1}T_k\eps^{k-1}z^k.
$$
The wave function is defined by
$$
w(T_*,\eps,z):=\hP\cdot e^\xi=\frac{G_z(\tau)}{\tau}e^\xi\in\mbC[\eps,\eps^{-1}][[T_1,T_2,\ldots]][[z,z^{-1}]].
$$
It satisfies the equations
\begin{gather}\label{eq:equations for wave}
\frac{\d w}{\d T_n}=\eps^{n-1}(\mcL^n)_+w,\quad n\ge 1.
\end{gather}

\subsection{Gelfand--Dickey reduction and the closed $r$-spin partition function}\label{subsection:Gelfand-Dickey reduction}

Let $r\ge 2$. It is easy to see that the equation
\begin{gather}\label{eq:Gelfand-Dickey reduction}
(\mcL^r)_-=0
\end{gather}
is invariant with respect to the flows of the KP hierarchy~\eqref{eq:KP hierarchy}. Therefore, it defines a reduction of the KP hierarchy that is called the {\it $r$-th Gelfand--Dickey hierarchy}. Let
$$
L:=\mcL^r=\d_x^r+\sum_{i=0}^{r-2}f_i\d_x^i.
$$
Then all the coefficients $u_i$ of the operator $L$ can be expressed in terms of the functions $f_0,f_1,\ldots,f_{r-2}$, and the Gelfand--Dickey hierarchy can be written as the following system of equations:
\begin{gather}\label{eq:Gelfand-Dickey hierarchy}
\frac{\d L}{\d T_n}=\eps^{n-1}[(L^{n/r})_+,L],\quad n\ge 1.
\end{gather}
Clearly, $\frac{\d L}{\d T_{dr}}=0$ for any $d\ge 1$.

Suppose $\mcL$ is a solution of the KP hierarchy satisfying the property~\eqref{eq:Gelfand-Dickey reduction}. Then the dressing operator $P$ and the tau-function $\tau$ can be chosen in such a way that $\frac{\d P}{\d T_{dr}}=0$ and $\frac{\d\tau}{\d T_{dr}}=0$ for any $d\ge 1$. In this case, the function $\tau$ is called a {\it tau-function} of the Gelfand--Dickey hierarchy.

Consider the generating series $F^{\frac{1}{r},c}(t^*_*,\eps)$ of the closed $r$-spin intersection numbers,
\begin{gather*}
F^{\frac{1}{r},c}(t^*_*,\eps)=\sum_{\substack{g\ge 0,\,n\ge 1\\2g-2+n>0}}\frac{\eps^{2g-2}}{n!}\sum_{\substack{0\le \alpha_1,\ldots,\alpha_n\le r-2\\d_1,\ldots,d_n\ge 0}}\<\tau_{d_1}^{\alpha_1}\cdots\tau_{d_n}^{\alpha_n}\>^{\frac{1}{r}}_g t^{\alpha_1}_{d_1}\cdots t^{\alpha_n}_{d_n}.
\end{gather*}
In~\cite{FSZ10}, it is proven that the exponent $\tau^{\frac{1}{r},c}:=e^{F^{\frac{1}{r},c}}$ becomes a tau-function of the Gelfand--Dickey hierarchy after the change of variables
\begin{gather}\label{eq:closed r-spin change of variables}
T_k=\frac{1}{(-r)^{\frac{3k}{2(r+1)}-\frac{1}{2}-d}k!_r}t^\alpha_d,\quad 0\le\alpha\le r-2,\quad d\ge 0,
\end{gather}
where $k=\alpha+1+rd$ and
$$
k!_r:=\prod_{i=0}^d(\alpha+1+ri).
$$
The corresponding solution $L$ of the Gelfand--Dickey system~\eqref{eq:Gelfand-Dickey hierarchy} satisfies the initial condition
\begin{gather}\label{eq:initial condition for L}
L|_{T_{\ge 2}=0}=\d_x^r+\eps^{-r}rx.
\end{gather}

Let us prove a simple lemma describing the lowest-degree term in $\eps$ of the operator $L$.
\begin{lemma}\label{lemma:properties of L}
Define a Poisson bracket $\{\cdot,\cdot\}$ in the ring $\mbC[[T_*]][z]$ by
\begin{align*}
&\{z,f\}:=\d_x f,        && f\in\mbC[[T_*]],\\
&\{f_1,f_2\}=\{z,z\}:=0, && f_1,f_2\in\mbC[[T_*]].
\end{align*}
Then:
\begin{enumerate}
\item The functions~$f_i$ for $0\le i\le r-2$ have the form
$$
f_i=\sum_{g\ge 0}f^{[g]}_i \eps^{i-r+g},\quad f^{[g]}_i\in\mbC[[T_*]].
$$
\item Denote
$$
L_0:=\d_x^r+\sum_{i=0}^{r-2}f_i^{[0]}\d_x^i.
$$
Then the operator $L_0$ is uniquely determined by the equations
$$
\res\hL_0^{n/r}=\frac{\d^2 F^{1/r,c}_0}{\d T_1\d T_n},\quad 1\le n\le r-1.
$$
\item We have
$$
\frac{\d\hL_0}{\d T_a}=\left\{\left(\hL_0^{\frac{a}{r}}\right)_+,\hL_0\right\}.
$$
\end{enumerate}
\end{lemma}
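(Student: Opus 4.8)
The plan is to treat all three parts as statements about a single $\eps$-weight. Assign to each monomial $\eps^m\d_x^j$ (with coefficient in $\mbC[[T_*]]$) the weight $\mathrm{wt}(\eps^m\d_x^j):=m-j$, and define the weight of an operator as the minimum weight over its terms. Two elementary facts about this weight will drive everything. First, from the product rule $\d_x^k\circ f=\sum_l\binom{k}{l}(\d_x^lf)\d_x^{k-l}$, each derivative landing on a coefficient raises the weight by exactly the number $l$ of derivatives, so $\mathrm{wt}(A\circ B)\ge\mathrm{wt}(A)+\mathrm{wt}(B)$, and the term achieving equality is the $l=0$ term, whose symbol is the commutative product of the leading symbols of $A$ and $B$. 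Second, consequently the leading-weight part of a commutator $[A,B]$ vanishes whenever the leading symbols commute, so that $\mathrm{wt}([A,B])\ge\mathrm{wt}(A)+\mathrm{wt}(B)+1$ in that case, with the weight-$(\mathrm{wt}(A)+\mathrm{wt}(B)+1)$ part equal to the Poisson bracket of the leading symbols. In this language, part (a) is precisely the assertion that every term of $L$ has weight $\ge -r$.

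For part (a) I would induct on the order of the Taylor expansion in the variables $T_{\ge 2}$. The initial condition $L|_{T_{\ge 2}=0}=\d_x^r+\eps^{-r}rx$ has $\mathrm{wt}(\d_x^r)=-r$ and $\mathrm{wt}(\eps^{-r}T_1)=-r$, giving the base case. For the inductive step I would use the flow $\frac{\d L}{\d T_a}=\eps^{a-1}[(L^{a/r})_+,L]$. From the first fact and the recursive construction of $L^{1/r}$, one checks that $\mathrm{wt}(L)\ge -r$ forces $\mathrm{wt}(L^{a/r})\ge -a$, hence $\mathrm{wt}((L^{a/r})_+)\ge -a$, with leading symbols $(\hL_0^{a/r})_+$ and $\hL_0$, which commute. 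By the second fact the commutator then has weight $\ge -(a+r)+1$, and multiplying by $\eps^{a-1}$ gives $\mathrm{wt}\big(\frac{\d L}{\d T_a}\big)\ge -r$. Since $L$ is the unique formal series in $T_{\ge 2}$ with the given initial value and these derivatives, every Taylor coefficient lies in the weight-$\ge -r$ subspace, proving (a). The borderline bookkeeping here is the step I expect to be the true crux: a naive weight count is off by one, and only the cancellation of the leading commutator term coming from commutativity of the leading symbols recovers the sharp bound.

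Part (c) is then the leading-weight shadow of the very same flow. The left side $\frac{\d L}{\d T_a}$ has leading symbol $\frac{\d\hL_0}{\d T_a}$, while by the second fact the weight-$(-r)$ part of $\eps^{a-1}[(L^{a/r})_+,L]$ is exactly $\{(\hL_0^{a/r})_+,\hL_0\}$; matching the two yields $\frac{\d\hL_0}{\d T_a}=\{(\hL_0^{a/r})_+,\hL_0\}$. Along the way one verifies that the bracket produced is the one in the statement, namely $\frac{\d A}{\d z}\frac{\d B}{\d x}-\frac{\d A}{\d x}\frac{\d B}{\d z}$, by extending the defining relations $\{z,f\}=\d_xf$ and $\{f_1,f_2\}=\{z,z\}=0$ as a biderivation.

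For part (b) I would first extract the relation and then prove uniqueness. Taking residues in $\res\mcL^n=\eps^{1-n}\frac{\d^2 F^{1/r,c}}{\d T_1\d T_n}$ and inserting the genus expansion $F^{1/r,c}=\sum_{g\ge 0}\eps^{2g-2}F^{1/r,c}_g$, the right side becomes $\sum_{g\ge 0}\eps^{2g-1-n}\frac{\d^2 F^{1/r,c}_g}{\d T_1\d T_n}$, whose lowest $\eps$-power is $\eps^{-1-n}$. On the left, $\res L^{n/r}$ is the coefficient of $\d_x^{-1}$ in $L^{n/r}$; by (a) this operator has weight $\ge -n$ with leading symbol $\hL_0^{n/r}$, so its $\d_x^{-1}$-coefficient has lowest $\eps$-power $\eps^{-1-n}$ and that coefficient equals $\res\hL_0^{n/r}$. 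Matching the $\eps^{-1-n}$ terms gives $\res\hL_0^{n/r}=\frac{\d^2 F^{1/r,c}_0}{\d T_1\d T_n}$ for every $n\ge 1$. To see these determine $\hL_0=z^r+\sum_{i=0}^{r-2}f_i^{[0]}z^i$, I would run the equations $n=1,\ldots,r-1$ as a triangular recursion: the part of $\res\hL_0^{n/r}$ linear in the unknowns is $\frac{n}{r}f^{[0]}_{r-1-n}$, and a short estimate on the powers of $z$ (using $0\le i\le r-2$) shows every higher-order contribution involves only coefficients $f^{[0]}_i$ with $i>r-1-n$, i.e. those already fixed by the equations with smaller $n$. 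Hence $f^{[0]}_{r-2},f^{[0]}_{r-3},\ldots,f^{[0]}_0$ are successively and uniquely determined, completing (b).
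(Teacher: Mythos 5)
Your proposal is correct, and for parts (b) and (c) it is in substance the same argument as the paper's: the paper decomposes $[(L^{a/r})_+,L]=\sum_i R_i(f_*^{(*)})\d_x^i$ by ``differential degree'' in the $f_i^{(j)}$ and proves $R_{i,0}=0$, $R_{i,1}=\eps^{i-r-a+1}\widetilde R_i+O(\eps^{i-r-a+2})$, $R_{i,j}=O(\eps^{i-r-a+2})$ for $j\ge 2$, which is exactly your observation that each derivative landing on a coefficient raises the weight $m-j$ by one and that the zero-derivative (commutative) part of the commutator cancels; and its uniqueness argument for (b) is the same triangular inversion between $\{f^{[0]}_i\}$ and $\{\res\hL_0^{n/r}\}$ that you run. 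Where you genuinely diverge is part (a). The paper never touches the flows or the initial condition there: it sets $w_i:=\res\mcL^i=\eps^{1-i}\,\d^2F^{\frac{1}{r},c}/\d T_1\d T_i$, observes that $w_i=\tfrac{i}{r}f_{r-i-1}+P_i(f_{r-i}^{(*)},\ldots,f_{r-2}^{(*)})$ is a homogeneous triangular (hence invertible) polynomial transformation, and reads the bound $f_i=O(\eps^{i-r})$ directly off the genus expansion $F^{\frac{1}{r},c}=\sum_{g\ge 0}\eps^{2g-2}F^{\frac{1}{r},c}_g$, which has no terms below $\eps^{-2}$; parts (a) and (b) then fall out of one computation. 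You instead propagate the weight bound $\mathrm{wt}\ge -r$ from the initial condition \eqref{eq:initial condition for L} through the Gelfand--Dickey flows, which costs you the extra verifications that $\mathrm{wt}(L^{1/r})\ge -1$ (your recursion on the coefficients of the $r$-th root does close, as you indicate) and that the leading commutator term cancels, but it is internal to the hierarchy, does not presuppose the positivity of the genus grading of $F^{\frac{1}{r},c}$, and makes part (c) literally the next order of the same expansion. Both routes are complete; the paper's is shorter because it borrows geometric input, yours is the more self-contained dispersionless-limit argument.
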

\begin{proof}
Introduce the notation
\begin{align*}
&w_i^{(j)}:=\eps^{1-i}\d_x^j\frac{\d^2 F^{\frac{1}{r},c}}{\d T_1\d T_i}, && 1\le i\le r-1, \quad j\ge 0,\\
&f_i^{(j)}:=\d_x^jf_i, && 0\le i\le r-2, \quad j\ge 0.
\end{align*}
Then it is easy to see that
\begin{gather}\label{eq:f to w change}
w_i=\frac{i}{r} f_{r-i-1}+P_i(f_{r-i}^{(*)},\ldots,f_{r-2}^{(*)}),
\end{gather}
where $P_i$ is a polynomial in $f_{r-i}^{(*)},\ldots,f_{r-2}^{(*)}$. Moreover, if we assign to $f_k^{(j)}$ degree $r-k+j$, then the polynomial $P_i$ is homogeneous of degree $i+1$. The transformation~\eqref{eq:f to w change} is clearly invertible, so we have
\begin{gather}\label{eq:w to f change}
f_i=\frac{r}{i} w_{r-i-1}+Q_i(w_1^{(*)},\ldots,w_{r-i-2}^{(*)}),
\end{gather}
where $Q_i$ is a homogeneous polynomial of degree $r-i$ if we assign to $w_j^{(k)}$ degree $j+1+k$. It remains to note that
$$
w_i=\eps^{-i-1}\frac{\d^2 F^{1/r,c}_0}{\d T_1\d T_i}+O(\eps^{-i})
$$
and parts (a) and (b) of the lemma become clear.

Let us prove part (c), again using the homogeneity argument. We have
$$
\left[\left(L^\frac{a}{r}\right)_+,L\right]=\sum_{i=0}^{r-2}R_i(f_*^{(*)})\d_x^i,
$$
where a polynomial $R_i$ has degree $a+r-i$. Let us assign to $f^{(j)}_i$ differential degree $j$ and express a polynomial $R_i$ as
$$
R_i(f_*^{(*)})=\sum_{j\ge 0}R_{i,j}(f_*^{(*)}),
$$
where a polynomial $R_{i,j}$ has differential degree~$j$. Clearly, $R_{i,0}=0$ and for $R_{i,1}$ we have the formula
$$
\sum_{i=0}^{r-2}R_{i,1}z^i=\left\{\left(\hL^{\frac{a}{r}}\right)_+,\hL\right\}.
$$
We have $R_{i,j}=O(\eps^{i-r-a+2})$, for $j\ge 2$, and
$$
R_{i,1}=\eps^{i-r-a+1}\widetilde{R}_i(T_*)+O(\eps^{i-r-a+2}),
$$
where
$$
\sum_{i=0}^{r-2}\widetilde{R}_i(T_*)z^i=\left\{\left(\hL_0^{\frac{a}{r}}\right)_+,\hL_0\right\}.
$$
Part (c) of the lemma is proved.
\end{proof}

\subsection{Special solution of the equations for the wave function}\label{subsection:special solution}

Let $L$ be the solution of the Gelfand--Dickey hierarchy~\eqref{eq:Gelfand-Dickey hierarchy} associated to the tau-function~$\tau^{\frac{1}{r},c}$. Let~$\Phi(T_*,\eps)$ be the unique solution of the system of equations
\begin{gather}\label{eq:equations for phi}
\frac{\d\Phi}{\d T_n}=\eps^{n-1}(L^{n/r})_+\Phi
\end{gather}
that satisfies the initial condition
$$
\left.\Phi\right|_{T_{\ge 2}=0}=1.
$$

\begin{rmk}
In the case $r=2$, the function $\Phi$ was first considered in~\cite{Bur16}, and the first author proved there that the logarithm of~$\Phi$ coincides with the generating series of the intersection numbers on the moduli space of Riemann surfaces with boundary. Properties of the function~$\Phi$ for general $r$ were first studied in~\cite{BY15}. Note that the system of equations~\eqref{eq:equations for phi} for the function~$\Phi$ coincides with the system of equations~\eqref{eq:equations for wave} for the wave function $w$ of the KP hierarchy. In~\cite{BY15}, the authors found an explicit formula for $\Phi$ in terms of the wave function~$w$.
\end{rmk}

Denote $\phi:=\log\Phi$ and consider the expansion
$$
\phi=\sum_{g\in\mbZ}\eps^{g-1}\phi_g,\quad \phi_g\in\mbC[[T_*]].
$$
Let
\begin{gather}\label{eq:r-1 change}
T_{mr}=\frac{1}{(-r)^{\frac{m(r-2)}{2(r+1)}}m!r^m}t^{r-1}_{m-1},\quad m\ge 1.
\end{gather}
Define correlators $\<\tau^{\alpha_1}_{d_1}\cdots\tau^{\alpha_n}_{d_n}\>^\phi_g$ by
$$
\<\tau^{\alpha_1}_{d_1}\cdots\tau^{\alpha_n}_{d_n}\>^\phi_g:=\left.\frac{\d^n\phi_g}{\d t^{\alpha_1}_{d_1}\cdots\d t^{\alpha_n}_{d_n}}\right|_{t^*_*=0},\quad 0\le\alpha_1,\ldots,\alpha_n\le r-1,\quad d_1,\ldots,d_n\ge 0.
$$
\begin{lemma}\label{lemma:properties of phi0}
The correlator $\<\tau_{d_1}^{\alpha_1}\cdots\tau_{d_n}^{\alpha_n}\>^\phi_g$ can be non-zero only if
\begin{align}
&g\ge 0,\label{eq:phicor,property 1}\\
&\sum_{i=1}^n\left(\frac{\alpha_i}{r}+d_i-1\right)=\frac{(r+1)(g-1)}{r}.\label{eq:phicor,property 2}
\end{align}
\end{lemma}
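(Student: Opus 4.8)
The plan is to derive both selection rules from two gradings of $\mbC[\eps,\eps^{-1}][[T_*]]$: a \emph{weight} grading that yields the dimension constraint \eqref{eq:phicor,property 2}, and a \emph{genus filtration} that yields the bound \eqref{eq:phicor,property 1}. The common first step is to pass from correlators to monomials. By \eqref{eq:closed r-spin change of variables} and \eqref{eq:r-1 change}, for every $0\le\alpha\le r-1$ and $d\ge0$ the variable $t^\alpha_d$ is a nonzero constant multiple of $T_k$ with $k=\alpha+1+rd$ (as $\alpha$ runs over $\{0,\dots,r-1\}$ and $d\ge0$, the index $k$ runs over all positive integers); hence $\partial_{t^\alpha_d}$ is a nonzero multiple of $\partial_{T_k}$, and $\langle\prod_i\tau^{\alpha_i}_{d_i}\rangle^\phi_g=\left.\prod_i(\mathrm{const})\,\partial_{T_{k_i}}\phi_g\right|_{T=0}$ can be nonzero only if $\phi_g$ contains the monomial $\prod_i T_{k_i}$ with $k_i=\alpha_i+1+rd_i$.

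For \eqref{eq:phicor,property 2} I would introduce the weights $\operatorname{wt}\eps=-(r+1)$, $\operatorname{wt}T_k=k-(r+1)$ and the induced $\operatorname{wt}\partial_x=\operatorname{wt}z=r$ (consistent with $\partial_x=\partial_{T_1}$), letting $\mbC^*$ scale each homogeneous element of weight $w$ by $\lambda^w$. Under these weights an operator $\sum_j a_j\partial_x^j$ is homogeneous of weight $\omega$ exactly when $\operatorname{wt}a_j=\omega-rj$, and such an operator then raises the weight of a function by $\omega$. A direct check shows the initial condition \eqref{eq:initial condition for L} is homogeneous of weight $r^2$, and the flows \eqref{eq:Gelfand-Dickey hierarchy} are $\mbC^*$-equivariant (both sides of the $n$-th flow carry weight $r^2+(r+1-n)$); by uniqueness of the solution with prescribed value at $T_{\ge2}=0$, the operator $L$ is homogeneous of weight $r^2$, so $(L^{n/r})_+$ has weight $nr$. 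The same arithmetic shows $\eps^{n-1}(L^{n/r})_+$ and $\partial_{T_n}$ both raise weight by $r+1-n$, so the linear system \eqref{eq:equations for phi} is $\mbC^*$-equivariant; as $\Phi|_{T_{\ge2}=0}=1$ has weight $0$, uniqueness forces $\Phi$, and hence $\phi=\log\Phi$, to be homogeneous of weight $0$. Extracting the coefficient of $\eps^{g-1}$, the series $\phi_g$ is homogeneous of weight $(g-1)(r+1)$ in the $T_k$, so a monomial $\prod_iT_{k_i}$ occurs only if $\sum_i(k_i-(r+1))=(g-1)(r+1)$; substituting $k_i=\alpha_i+1+rd_i$ and dividing by $r$ gives precisely \eqref{eq:phicor,property 2}.

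For \eqref{eq:phicor,property 1} I would use the genus filtration $\deg_2\eps=1$, $\deg_2\partial_x=\deg_2z=-1$, $\deg_2T_k=0$. By Lemma~\ref{lemma:properties of L}(a) the operator $L$ has $\deg_2\ge-r$ (its coefficient of $\partial_x^i$ being $O(\eps^{i-r})$), whence $\mcL=L^{1/r}$ has $\deg_2\ge-1$ and the coefficient $b_{n,j}$ of $\partial_x^j$ in $(L^{n/r})_+=(\mcL^n)_+$ satisfies $b_{n,j}=O(\eps^{j-n})$. Rewriting \eqref{eq:equations for phi} as $\partial_{T_n}\phi=\eps^{n-1}\sum_{j=0}^n b_{n,j}\,e^{-\phi}\partial_x^j e^\phi$ and expanding $e^{-\phi}\partial_x^j e^\phi$ as the complete Bell polynomial in $\partial_x\phi,\dots,\partial_x^j\phi$, each of whose monomials $\prod_l(\partial_x^l\phi)^{m_l}$ has $\sum_l l\,m_l=j$ and hence involves $M:=\sum_l m_l\le j$ factors of $\phi$, one checks that if $\phi$ is posited of the form $\sum_{g\ge0}\eps^{g-1}\phi_g$ then each term on the right-hand side has lowest $\eps$-power $\ge(n-1)+(j-n)-M=j-1-M\ge-1$, matching the left-hand side. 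Thus the ansatz is self-consistent and solvable recursively subject to $\phi|_{T_{\ge2}=0}=0$: the coefficient of $\eps^{-1}$ gives a first-order Hamilton--Jacobi equation for $\phi_0$, and the coefficient of $\eps^{g-1}$ gives a linear evolution equation determining $\phi_g$ from $\phi_0,\dots,\phi_{g-1}$. Since $e^{\sum_{g\ge0}\eps^{g-1}\phi_g}$ then solves \eqref{eq:equations for phi} with initial value $1$, uniqueness of $\Phi$ identifies it with $\phi$, so $\phi_g=0$ for $g<0$, which is \eqref{eq:phicor,property 1}.

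The two homogeneity/uniqueness reductions are routine once the weights are fixed; the step most likely to demand care is the genus filtration behind \eqref{eq:phicor,property 1}, namely confirming that the dispersive corrections contained in $(L^{n/r})_+$ never push the $\eps$-order below $\eps^{-1}$ and that the resulting genus recursion is well-founded. This is exactly where the estimate $b_{n,j}=O(\eps^{j-n})$ from Lemma~\ref{lemma:properties of L}(a), combined with the bound $M\le j$ on the number of $\phi$-factors produced by the Bell-polynomial expansion, is indispensable.
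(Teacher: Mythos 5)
Your proposal is correct and follows essentially the same route as the paper: your $\mathbb{C}^*$-weight argument for \eqref{eq:phicor,property 2} is exactly the paper's operator $O$ (the Euler vector field of your grading, divided by $r$) together with the identities $(z\partial_z+O)\hL=r\hL$ and $O\Phi=0$, and your genus-filtration argument for \eqref{eq:phicor,property 1} uses the same inputs as the paper, namely the estimate $R_i=O(\eps^{i-n})$ on the coefficients of $L^{n/r}$ from Lemma~\ref{lemma:properties of L}(a) and the expansion \eqref{eq:derivative of exponent} of $e^{-\theta}\partial_x^i e^{\theta}$ with the bound $\sum m_j\le\sum jm_j=i$ on the number of $\phi$-factors.
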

\begin{proof}
Let us prove property~\eqref{eq:phicor,property 1}. The function $\phi$ satisfies the following equations:
$$
\frac{\d\phi}{\d T_n}=\eps^{n-1}\frac{(L^\frac{n}{r})_+e^\phi}{e^\phi}.
$$
Therefore, it is sufficient to prove that for any function $\theta\in\mbC[\eps,\eps^{-1}][[T_*]]$ such that $\theta=O(\eps^{-1})$, we have $\eps^{n-1}\frac{(L^\frac{n}{r})_+e^{\theta}}{e^{\theta}}=O(\eps^{-1})$. From Lemma~\ref{lemma:properties of L} it follows that the operator $L^{\frac{n}{r}}$ has the form $L^{\frac{n}{r}}=\sum_{i\le n}R_i\d_x^i$, $R_i\in\mbC[\eps,\eps^{-1}][[T_*]]$, where $R_i=O(\eps^{i-n})$. By induction, it is easy to prove that
\begin{gather}\label{eq:derivative of exponent}
\frac{\d_x^i e^\theta}{e^\theta}=i!\sum_{\substack{m_1,m_2,\ldots\ge 0\\\sum jm_j=i}}\prod_{j\ge 1}\frac{(\d_x^j\theta)^{m_j}}{(j!)^{m_j}m_j!},\quad i\ge 0.
\end{gather}
Since $\theta=O(\eps^{-1})$, we have $\prod_{j\ge 1}(\d_x^j\theta)^{m_j}=O(\eps^{-i})$. Therefore, $R_i\frac{\d_x^ie^\theta}{e^\theta}=O(\eps^{-n})$ and
$$
\eps^{n-1}\sum_{i=0}^n R_i\frac{\d_x^ie^\theta}{e^\theta}=O(\eps^{-1}).
$$
Property~\eqref{eq:phicor,property 1} is proved.

Let us prove property~\eqref{eq:phicor,property 2}. Consider the linear differential operator
$$
O:=\sum_{\substack{0\le\alpha\le r-1\\d\ge 0}}\left(\frac{\alpha}{r}+d-1\right)t^\alpha_d\frac{\d}{\d t^\alpha_d}-\frac{r+1}{r}\eps\frac{\d}{\d \eps}.
$$
We have to prove that $O\phi=0$ or, equivalently, $O\Phi=0$. Using the Gelfand--Dickey equations~\eqref{eq:Gelfand-Dickey hierarchy}, it is easy to show that
$$
\left(z\frac{\d}{\d z}+O\right)\hL=r\hL.
$$
Then, similarly, using equations~\eqref{eq:equations for phi} one can show that $O\Phi=0$. The lemma is proved.
\end{proof}

\begin{lemma}\label{lemma:string equation for phi}
The function $\phi$ satisfies the string equation
\begin{gather}\label{eq:string equation for phi}
\left(\frac{\d}{\d T_1}-\sum_{i\ge 1}(i+r)T_{i+r}\frac{\d}{\d T_i}\right)\phi=r\eps^{-1}T_r.
\end{gather}
\end{lemma}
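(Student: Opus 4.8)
The plan is to convert the scalar identity \eqref{eq:string equation for phi} into an operator statement about the Lax operator. Since $\phi=\log\Phi$ and the operator $D:=\frac{\d}{\d T_1}-\sum_{i\ge1}(i+r)T_{i+r}\frac{\d}{\d T_i}$ is a derivation, the claim $D\phi=r\eps^{-1}T_r$ is equivalent to $D\Phi=r\eps^{-1}T_r\,\Phi$. Writing $B_n:=\eps^{n-1}(L^{n/r})_+$, the defining equations~\eqref{eq:equations for phi} read $\frac{\d\Phi}{\d T_n}=B_n\Phi$, and since $B_1=(L^{1/r})_+=\d_x$, substituting them into $D\Phi$ gives $D\Phi=M\Phi$, where $M:=\d_x-\sum_{i\ge1}(i+r)T_{i+r}B_i$. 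Thus the first step reduces the lemma to the single operator equation $(M-r\eps^{-1}T_r)\Phi=0$.

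To prove this I would set $\Psi:=(M-r\eps^{-1}T_r)\Phi$ and use uniqueness. By the same integrability that defines $\Phi$, the series $\Phi$ is the unique solution of the compatible linear system $\{\frac{\d\Phi}{\d T_n}=B_n\Phi\}_{n\ge2}$ with $\Phi|_{T_{\ge2}=0}=1$; hence it suffices to show that $\Psi$ solves the \emph{same} homogeneous system $\frac{\d\Psi}{\d T_n}=B_n\Psi$ and vanishes at $T_{\ge2}=0$. The initial condition is immediate: $M|_{T_{\ge2}=0}=\d_x$ annihilates the constant $1$, and $T_r|_{T_{\ge2}=0}=0$. Differentiating $\Psi$ and using the flows for $\Phi$, one finds $\frac{\d\Psi}{\d T_n}-B_n\Psi=\big(\tfrac{\d}{\d T_n}M+[M,B_n]-r\eps^{-1}\delta_{n,r}\big)\Phi$, so the compatibility is equivalent to the operator identity $D_cB_n=nB_{n-r}$ for $n>r$, $D_cB_r=r\eps^{-1}$, and $D_cB_n=0$ for $n<r$, where $D_c:=[M,\cdot]$ (the action of $D$ on coefficients, rewritten through the flows).

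Granting the single \emph{Lax string equation} $[M,L]=r\eps^{-r}$, everything collapses cleanly: since $D_c=[M,\cdot]$ is a derivation and $r\eps^{-r}$ is central, $D_cL^{n/r}=n\eps^{-r}L^{n/r-1}$, and taking positive parts yields $D_cB_n=\eps^{n-1}(D_cL^{n/r})_+=nB_{n-r}$ (respectively $r\eps^{-1}$ for $n=r$ and $0$ for $n<r$), exactly the identities required above. It therefore remains to prove $[M,L]=r\eps^{-r}$. This holds at $T_{\ge2}=0$ by the explicit initial condition~\eqref{eq:initial condition for L}, since $M|_{T_{\ge2}=0}=\d_x$ and $[\d_x,\d_x^r+\eps^{-r}rx]=r\eps^{-r}$. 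To propagate it to all times, set $Z:=[M,L]-r\eps^{-r}$; using the GD flows $\frac{\d L}{\d T_m}=[B_m,L]$ together with the zero-curvature relations $\frac{\d}{\d T_m}B_n-\frac{\d}{\d T_n}B_m=[B_m,B_n]$, a Jacobi-identity computation gives a homogeneous evolution $\frac{\d}{\d T_m}Z=[B_m,Z]+[W_m,L]$, in which $W_m:=\frac{\d}{\d T_m}M+[M,B_m]$ is scalar modulo terms proportional to $Z$ (its $Z=0$ value is $r\eps^{-1}\delta_{m,r}$, which commutes with $L$). An induction on the degree in the variables $T_{\ge2}$, with base case the computation above, then forces $Z\equiv0$.

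The hard part will be the last step—establishing $[M,L]=r\eps^{-r}$ globally. The delicate point is that $M$ carries explicit time dependence through the coefficients $T_{i+r}$, so the commutator bookkeeping must be organized carefully: one has to verify, using $[L^{a/r},L^{b/r}]=0$ and the zero-curvature relations, that the non-scalar part of $W_m$ is genuinely proportional to $Z$, so that the degree induction closes. An alternative route, which may be cleaner to cite than to reprove, is to observe that $[M,L]=r\eps^{-r}$ is precisely the Lax form of the genus-expanded string (lowest Virasoro) constraint for the closed $r$-spin tau-function $\tau^{\frac{1}{r},c}$, available from~\cite{FSZ10}; translating that constraint into this Lax statement is then the crux.
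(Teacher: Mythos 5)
Your argument is correct and follows essentially the same route as the paper: the paper's (very terse) proof consists precisely of the two assertions that the operator $O=\frac{\d}{\d T_1}-\sum_{i\ge1}(i+r)T_{i+r}\frac{\d}{\d T_i}$ satisfies $OL=r\eps^{-r}$ (your $[M,L]=r\eps^{-r}$, via the Gelfand--Dickey flows) and that this implies $O\Phi=r\eps^{-1}T_r\Phi$, so you have simply supplied the details the paper omits. One point to tighten: $[M,\cdot]$ does \emph{not} commute with $(\cdot)_+$, so the identity you actually need is for the coefficient-wise action of $D$ on $B_n$ (which does commute with $(\cdot)_+$, and agrees with $[M,\cdot]$ on $L^{n/r}$ through the Lax flows but not on $B_n$ itself); the combination $\frac{\d M}{\d T_n}+[M,B_n]$ reduces, via the zero-curvature relations, to exactly that coefficient-wise derivative of $B_n$ minus $nB_{n-r}$, after which your computation closes as claimed.
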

\begin{proof}
Equation~\eqref{eq:string equation for phi} was proved in~\cite{BY15}, but in order to make the paper more self-contained let us give a proof here. Denote by $O$ the operator in the brackets on the left-hand side of~\eqref{eq:string equation for phi}. Using the Gelfand--Dickey equations~\eqref{eq:Gelfand-Dickey hierarchy}, it is easy to show that $O L=\eps^{-r}r$. Then, similarly, using equations~\eqref{eq:equations for phi} one can show that $O\Phi=(r\eps^{-1}T_r)\Phi$.
\end{proof}

\subsection{Closed extended potential and the special solution}\label{subsection:closed extended and special}

Consider the generating series~$F^{\frac{1}{r},\ext}_0(t^*_*)$ of the closed extended $r$-spin intersection numbers in genus zero,
$$
F_0^{\frac{1}{r},\ext}(t^*_*)=\sum_{n\ge 2}\frac{1}{n!}\sum_{\substack{0\le\alpha_1,\ldots,\alpha_n\le r-1\\d_1,\ldots,d_n\ge 0}}\<\tau^{\alpha_1}_{d_1}\cdots\tau^{\alpha_n}_{d_n}\tau^{-1}_0\>^{\frac{1}{r},\ext}_0t^{\alpha_1}_{d_1}\cdots t^{\alpha_n}_{d_n}.
$$
The main result of our paper is the following theorem.
\begin{thm}\label{theorem:closed extended and wave}
We have
\begin{gather*}
F_0^{\frac{1}{r},\ext}(t^{\le r-2}_*,t^{r-1}_*)=\sqrt{-r}\phi_0\left(t^{\le r-2}_*,\frac{1}{\sqrt{-r}}t^{r-1}_*\right).
\end{gather*}
\end{thm}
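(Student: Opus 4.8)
The plan is to prove the identity by showing that both sides are uniquely characterized by the same system of relations: the topological recursion relations \eqref{eq:NS_TRR}--\eqref{eq:-1_TRR}, the string equation of Lemma~\ref{lem:str}, and the two base correlators of Lemma~\ref{lemma:small closed extended correlators}. For the geometric side this has in effect already been established in the preceding lemmas, so the work is to verify that $\sqrt{-r}\,\phi_0\bigl(t^{\le r-2}_*,\tfrac1{\sqrt{-r}}t^{r-1}_*\bigr)$ obeys the same relations and then to run the reconstruction. Concretely, I would first set up the correlator dictionary $\<\prod_l\tau_{d_l}^{\alpha_l}\,\tau^{-1}_0\>^{\frac1r,\ext}_0 = (-r)^{(1-n_{r-1})/2}\,\<\prod_l\tau_{d_l}^{\alpha_l}\>^{\phi}_0$, where $n_{r-1}$ is the number of twist-$(r-1)$ insertions, which is exactly what the rescalings $t^{r-1}_*\mapsto\tfrac1{\sqrt{-r}}t^{r-1}_*$ and the overall factor $\sqrt{-r}$ encode. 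I would check at the outset that the two dimension constraints agree: the vanishing condition \eqref{eq:when_int_is_number_closed_extended} gives $\sum_i(\tfrac{\alpha_i}r+d_i-1)=-\tfrac{r+1}r$ on the extended side, which is precisely \eqref{eq:phicor,property 2} at $g=0$ from Lemma~\ref{lemma:properties of phi0}.

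To produce the recursion on the algebraic side, I would extract the leading $\eps$-order of the wave equations \eqref{eq:equations for phi}: writing $\Phi=e^\phi$ and using \eqref{eq:derivative of exponent}, the $\eps^{-1}$-coefficient yields the dispersionless relation $\frac{\d\phi_0}{\d T_n}=(\hL_0^{n/r})_+\big|_{z=\d_x\phi_0}$, where $\hL_0$ is the genus-zero Lax symbol determined by the closed potential $F^{\frac1r,c}_0$ through Lemma~\ref{lemma:properties of L}. Differentiating this in a second time-variable and feeding in the dispersionless Lax flow of Lemma~\ref{lemma:properties of L}(c), the symmetry of $\frac{\d^2\phi_0}{\d T_a\d T_b}$ converts into a quadratic recursion expressing any second derivative of $\phi_0$ through closed two-point functions $\frac{\d^2F^{\frac1r,c}_0}{\d T_*\d T_*}$ and lower derivatives of $\phi_0$; after passing to the $t$-variables via \eqref{eq:closed r-spin change of variables} and \eqref{eq:r-1 change} and applying the rescaling, this should match \eqref{eq:NS_TRR}--\eqref{eq:-1_TRR} term by term, the separate Ramond and twist-$(-1)$ blocks arising from the Ramond part of $\hL_0$. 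The string equation is more direct: I would rewrite \eqref{eq:string equation for phi} in the $t$-variables, using $T_1\leftrightarrow t^0_0$ and $T_r\leftrightarrow t^{r-1}_0$, and read off its $\eps^{-1}$-part, recovering Lemma~\ref{lem:str} with the inhomogeneous term $r\eps^{-1}T_r$ supplying the $n=2$ case. Finally, using the initial condition \eqref{eq:initial condition for L}, i.e.\ $\hL_0|_{T_{\ge2}=0}=z^r+rx$, together with the wave equation, I would compute $\<\tau^1_0\tau^{r-2}_0\>^\phi_0$ and $\<\tau^1_0\tau^{r-1}_0\tau^{r-1}_0\>^\phi_0$ to low order and check both equal $\tfrac1{\sqrt{-r}}$, which under the dictionary reproduce the two values $1$ and $-\tfrac1r$ of Lemma~\ref{lemma:small closed extended correlators}.

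For the reconstruction I would argue, in the style of Jarvis--Kimura--Vaintrob, that the relations above determine every correlator from the two base cases. Whenever an ordinary insertion carries a positive descendant, the Neveu--Schwarz or Ramond recursion lowers $\sum_l d_l$, splitting off an ordinary closed correlator (known from the theory underlying $F^{\frac1r,c}_0$) times an extended correlator of strictly smaller complexity; a primary correlator with all $d_l=0$ is accessed from above by inserting an auxiliary $\tau^0_0$, raising one descendant, and using the string equation of Lemma~\ref{lem:str} to relate it back to the higher correlator, which the recursion then computes. A key consistency check here is that the rescaling $(-r)^{(1-n_{r-1})/2}$ is compatible with every splitting: each node producing a twist-$(r-1)$/twist-$(-1)$ pair turns one extended factor into two, and the two factors' powers of $(-r)^{\pm1/2}$ multiply to reproduce the single power on the left, so the bookkeeping closes.

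The main obstacle will be the derivation of the recursion for $\phi_0$ in the second paragraph: squeezing the full combinatorics of \eqref{eq:NS_TRR}--\eqref{eq:-1_TRR} — the $\alpha$-sum and the distinct Ramond and twist-$(-1)$ terms — out of the single dispersionless identity $\frac{\d\phi_0}{\d T_n}=(\hL_0^{n/r})_+\big|_{z=\d_x\phi_0}$, while simultaneously tracking the two incompatible normalizations \eqref{eq:closed r-spin change of variables} and \eqref{eq:r-1 change} so that all powers of $\sqrt{-r}$ conspire correctly. The string equation and the two base-case computations are essentially mechanical once the changes of variables are fixed, and the reconstruction, though combinatorially delicate, is routine given Lemmas~\ref{lem:trr}--\ref{lemma:small closed extended correlators}.
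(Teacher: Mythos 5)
Your overall architecture agrees with the paper's: both sides are to be pinned down by the dimension constraint, a topological recursion relation with coefficients from $F^{\frac{1}{r},c}_0$, and the two base correlators, and your treatment of the algebraic side --- the dispersionless wave equation $\frac{\d\phi_0}{\d T_n}=(\hL_0^{n/r})_+\big|_{z=(\phi_0)_x}$ (Lemma~\ref{lemma:open equations in genus 0}), the induced TRR for $\phi_0$ (Proposition~\ref{prop:open TRR for phi0}), the base cases computed from $\hL_0|_{T_{\ge 2}=0}=z^r+rx$ (Lemma~\ref{lemma:initial condition for special}), and the scaling dictionary $(-r)^{(1-n_{r-1})/2}$ --- is exactly what the paper carries out. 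The gap is in the reconstruction step. Descendants are indeed eliminated by the TRR, but your mechanism for determining the primary correlators --- insert an auxiliary $\tau^0_0$, raise one insertion to $\tau^{\alpha_1}_1$, identify the result with the original correlator via the string equation, and then ``let the recursion compute it'' --- produces only a tautology. Applying the TRR to $\langle\tau^0_0\tau^{\alpha_1}_1\tau^{-1}_0\prod_{i\ge2}\tau^{\alpha_i}_0\rangle$, every splitting in which the factor containing $\tau^0_0$ has at least four insertions vanishes by the string equation (all its insertions are primary), and the only surviving terms are a three-point factor $\langle\tau^{\alpha}_0\tau^{\alpha_1}_0\tau^0_0\rangle=\delta_{\alpha+\alpha_1,r-2}$ multiplied by the original primary correlator itself. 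One obtains $P=P$. This reflects the general fact that string plus TRR never determine primary genus-zero invariants; a WDVV-type input is required, and nothing in your plan supplies it.

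The paper's device is specific and you would need to find it: pad the correlator with $r$ extra insertions of $\tau^{r-1}_0$ (each contributes $-\frac1r$ to the dimension count, so this is always admissible), form the auxiliary correlator $Z=\langle\tau^{-1}_0\tau^{\alpha_1}_1\prod_{i\ge2}\tau^{\alpha_i}_0(\tau^{r-1}_0)^{k+r}\rangle^{\frac1r,\ext}_0$, and evaluate it in two ways by applying the TRR~\eqref{eq:main property 2} with the second index $\beta$ equal once to $\alpha_l$ and once to $r-1$. The dimension constraint kills all cross terms involving closed $r$-spin correlators, and equating the two evaluations yields the recursion~\eqref{recursion for primary extended closed}, which expresses $A_{[l]}$ through products $A_IA_J$ with $I,J$ proper subsets of $[l]$; the seed $X_\alpha=(-1)^\alpha\alpha!/r^\alpha$ is obtained by the same two-ways trick applied to $\langle\tau^{-1}_0\tau^1_1\tau^{\gamma}_0(\tau^{r-1}_0)^{\gamma+2}\rangle$. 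Without this (or an equivalent WDVV-style argument) your proof does not close. Two smaller remarks: the paper's uniqueness argument does not use the string equation at all, and it matches the single generating-function TRR~\eqref{eq:open TRR for phi0} rather than the three separate relations \eqref{eq:NS_TRR}--\eqref{eq:-1_TRR}, which removes the term-by-term bookkeeping you identify as your main obstacle.
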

Before proving the theorem, let us formulate several auxiliary statements.

\begin{lemma}\label{lemma:open equations in genus 0}
The function $\phi_0$ satisfies the equations
\begin{gather}\label{eq:open equations in genus 0}
\frac{\d\phi_0}{\d T_n}=\left.(\hL_0^{\frac{n}{r}})_+\right|_{z=(\phi_0)_x},\quad n\ge 1.
\end{gather}
\end{lemma}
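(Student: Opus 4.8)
The plan is to extract the coefficient of $\eps^{-1}$ from both sides of the equation
\begin{equation*}
\frac{\d\phi}{\d T_n}=\eps^{n-1}\frac{(L^{n/r})_+e^\phi}{e^\phi},
\end{equation*}
which is obtained by dividing the defining equation~\eqref{eq:equations for phi} for $\Phi=e^\phi$ by $e^\phi$, exactly as in the proof of Lemma~\ref{lemma:properties of phi0}. Since Lemma~\ref{lemma:properties of phi0} gives $\phi=\eps^{-1}\phi_0+O(\eps^0)$, the coefficient of $\eps^{-1}$ on the left-hand side is precisely $\frac{\d\phi_0}{\d T_n}$, so it remains to identify the leading term in $\eps$ of the right-hand side.

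First I would record the two ingredients. For the action on $e^\phi$, formula~\eqref{eq:derivative of exponent} together with $\d_x^j\phi=\eps^{-1}\d_x^j\phi_0+O(\eps^0)$ shows that, under the constraint $\sum_j jm_j=i$, the lowest order in $\eps$ comes from the unique term with $m_1=i$ and all other $m_j=0$ (since $\sum_j m_j\le\sum_j jm_j$ with equality only in this case), giving
\begin{equation*}
\frac{\d_x^ie^\phi}{e^\phi}=\eps^{-i}\bigl((\phi_0)_x\bigr)^i+O(\eps^{-i+1}).
\end{equation*}
For the operator, I would write $(L^{n/r})_+=\sum_{i=0}^nR_i\d_x^i$ and refine the estimate $R_i=O(\eps^{i-n})$ already used in the proof of Lemma~\ref{lemma:properties of phi0} to the statement that
\begin{equation*}
R_i=\eps^{i-n}c_i+O(\eps^{i-n+1}),\qquad \sum_{i=0}^nc_iz^i=\Bigl(\hL_0^{\frac{n}{r}}\Bigr)_+.
\end{equation*}

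Granting both ingredients, the computation concludes quickly: multiplying and summing,
\begin{align*}
\eps^{n-1}\frac{(L^{n/r})_+e^\phi}{e^\phi}
&=\eps^{n-1}\sum_{i=0}^nR_i\frac{\d_x^ie^\phi}{e^\phi}\\
&=\eps^{n-1}\sum_{i=0}^n\Bigl(\eps^{i-n}c_i+O(\eps^{i-n+1})\Bigr)\Bigl(\eps^{-i}((\phi_0)_x)^i+O(\eps^{-i+1})\Bigr)\\
&=\eps^{-1}\sum_{i=0}^nc_i\bigl((\phi_0)_x\bigr)^i+O(\eps^0)\\
&=\eps^{-1}\left.\Bigl(\hL_0^{\frac{n}{r}}\Bigr)_+\right|_{z=(\phi_0)_x}+O(\eps^0).
\end{align*}
Comparing the coefficients of $\eps^{-1}$ with the left-hand side yields the desired equation~\eqref{eq:open equations in genus 0}.

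The main obstacle is justifying the refined leading-order form of $R_i$, i.e.\ that the leading $\eps$-part of the \emph{operator} root $(L^{n/r})_+$ is governed by the \emph{symbol} root $\bigl(\hL_0^{n/r}\bigr)_+$ of $L_0$. This is the dispersionless limit, and I would establish it by the rescaling $D:=\eps\d_x$, under which part~(a) of Lemma~\ref{lemma:properties of L} gives $\eps^rL=\hL_0(D)+O(\eps)$ with $\hL_0(D)=D^r+\sum_{i=0}^{r-2}f_i^{[0]}D^i$. Since $[D,f]=\eps f_x=O(\eps)$ for any $f\in\mbC[[T_*]]$, operator multiplication becomes commutative modulo $O(\eps)$, so that $(\eps^rL)^{n/r}=\eps^nL^{n/r}$ satisfies $\eps^nL^{n/r}=\hL_0(D)^{n/r}+O(\eps)$ and the operator root agrees with the function/symbol root through leading order; taking the non-negative-degree part and substituting $D=\eps\d_x$ produces exactly the coefficients $c_i$ with $\sum_ic_iz^i=\bigl(\hL_0^{n/r}\bigr)_+$. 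This is the same homogeneity-in-$\eps$ bookkeeping that underlies the proof of Lemma~\ref{lemma:properties of L}(c), now carried one order further so as to pin down the leading coefficients rather than merely their $\eps$-orders.
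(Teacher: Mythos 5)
Your proof is correct and follows essentially the same route as the paper's: both extract the coefficient of $\eps^{-1}$ from $\frac{\d\phi}{\d T_n}=\eps^{n-1}(L^{n/r})_+e^\phi/e^\phi$ using the expansion formula for $\d_x^ie^\phi/e^\phi$ and the fact that the leading $\eps$-coefficients of $L^{n/r}$ assemble into the symbol $\hL_0^{n/r}$. The only difference is that the paper dismisses that last fact as ``clear'' while you justify it via the $D=\eps\d_x$ rescaling; that justification is sound and a welcome addition.
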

\begin{proof}
From Lemma~\ref{lemma:properties of L} it follows that the operator $L^{\frac{n}{r}}$ has the form $L^{\frac{n}{r}}=\sum_{i\le n}R_i\d_x^i$, where $R_i=\sum_{j\ge 0}R_{i,j}\eps^{i-n+j}$, $R_{i,j}\in\mbC[[T_*]]$. It is also clear that $\sum_{i\le n}R_{i,0}z^i=\hL_0^{\frac{n}{r}}$. We have
$$
\frac{\d\phi}{\d T_n}=\eps^{n-1}\sum_{i=0}^n R_i\frac{\d_x^ie^\phi}{e^\phi}.
$$
Since $\phi=\eps^{-1}\phi_0+O(\eps^0)$, for any numbers $m_1,m_2,\ldots\ge 0$, satisfying $\sum jm_j=i$, we have
$$
\prod_{j\ge 1}(\d_x^j\phi)^{m_j}=
\begin{cases}
\eps^{-i}(\phi_0)_x^i+O(\eps^{-i+1}),&\text{if $m_1=i$ and $m_2=m_3=\ldots=0$},\\
O(\eps^{-i+1}),&\text{otherwise}.
\end{cases}
$$
Using formula~\eqref{eq:derivative of exponent} we then get $R_i\frac{\d_x^ie^\phi}{e^\phi}=R_{i,0}\eps^{-n}(\phi_0)_x^i+O(\eps^{-n+1})$ and
$$
\eps^{n-1}\sum_{i=0}^n R_i\frac{\d_x^ie^\phi}{e^\phi}=\eps^{-1}\sum_{i=0}^n R_{i,0}(\phi_0)_x^i+O(\eps^0)=\eps^{-1}\left.(\hL_0^{\frac{n}{r}})_+\right|_{z=(\phi_0)_x}+O(\eps^0).
$$
The lemma is proved.
\end{proof}

\begin{prop}\label{prop:open TRR for phi0}
The function $\phi_0$ satisfies the equations
\begin{gather}\label{eq:open TRR for phi0}
\frac{\d^2\phi_0}{\d t^\alpha_{p+1}\d t^\beta_q}=\sum_{\mu+\nu=r-2}\frac{\d^2 F^{\frac{1}{r},c}_0}{\d t^\alpha_p\d t^\mu_0}\frac{\d^2\phi_0}{\d t^\nu_0\d t^\beta_q}+\frac{\d\phi_0}{\d t^\alpha_p}\frac{\d^2\phi_0}{\d t^{r-1}_0\d t^\beta_q},\quad 0\le\alpha,\beta\le r-1,\quad p,q\ge 0.
\end{gather}
\end{prop}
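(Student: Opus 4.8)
The plan is to pass to the time variables $T_*$ of the hierarchy, where \eqref{eq:open TRR for phi0} becomes a single clean identity, and then to prove that identity from Lemma~\ref{lemma:open equations in genus 0} together with the dispersionless Lax structure of Lemma~\ref{lemma:properties of L}. First I would translate \eqref{eq:open TRR for phi0} into the $T_*$ variables: under the changes of variables \eqref{eq:closed r-spin change of variables} and \eqref{eq:r-1 change}, each operator $\d/\d t^\alpha_d$ is a constant times $\d/\d T_k$ with $k=\alpha+1+rd$. Writing $n=\alpha+1+rp$ and letting $m$ be the index attached to $(\beta,q)$, an explicit computation shows that the ratio of rescaling constants governing the left-hand side, the product of the two rescaling constants in each summand of the sum (with $a+b=r$), and the rescaling constant of the Ramond term all carry the common factor $(-r)^{\frac{2-r}{2(r+1)}}$; after cancelling it, \eqref{eq:open TRR for phi0} is equivalent to
\begin{equation*}
\frac{1}{n+r}\frac{\d^2\phi_0}{\d T_{n+r}\,\d T_m}
=\sum_{\substack{a+b=r\\ 1\le a,b\le r-1}}\frac{1}{ab}\,\frac{\d^2 F_0^{\frac1r,c}}{\d T_n\,\d T_a}\,\frac{\d^2\phi_0}{\d T_b\,\d T_m}
+\frac1r\,\frac{\d\phi_0}{\d T_n}\,\frac{\d^2\phi_0}{\d T_r\,\d T_m},
\tag{$\star$}
\end{equation*}
to be proved for all $n,m\ge1$. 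This bookkeeping is routine but should be done carefully, since the Neveu--Schwarz and Ramond parts of the change of variables have different shapes; note in particular that for $r\mid n$ the first sum drops out, matching the fact that $F_0^{\frac1r,c}$ is independent of the variables $T_{rd}$.

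Next I would record the analytic inputs. By Lemma~\ref{lemma:open equations in genus 0}, $\d_{T_c}\phi_0=(\hL_0^{c/r})_+\big|_{z=v}$ with $v:=(\phi_0)_x$; since $(\hL_0^{r/r})_+=\hL_0$, this gives $\d_{T_r}\phi_0=\hL_0(v)$, so $v$ is the distinguished root of $\hL_0(z)=\d_{T_r}\phi_0$. On the closed side I would upgrade Lemma~\ref{lemma:properties of L}(b) to the two-index residue formula
\begin{equation*}
\frac{\d^2 F_0^{\frac1r,c}}{\d T_n\,\d T_a}=-\res\!\left((\hL_0^{n/r})_+\,\d_z\hL_0^{a/r}\right),
\end{equation*}
which I would obtain by differentiating $\res\hL_0^{a/r}=\d_{T_1}\d_{T_a}F_0^{\frac1r,c}$ (the genus-zero limit of $\res\mcL^{a}=\eps^{1-a}\d_{T_1}\d_{T_a}\log\tau$) along the flow $\d_{T_n}\hL_0^{a/r}=\{(\hL_0^{n/r})_+,\hL_0^{a/r}\}$ and using the identity $\res\{X,Y\}=-\d_x\res(X\,\d_z Y)$, the $T_1$-integration constant being killed by the dimensional grading of $F_0^{\frac1r,c}$. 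This formula also shows directly that $\frac{\d^2 F_0^{\frac1r,c}}{\d T_n\,\d T_r}=-\res((\hL_0^{n/r})_+\,\d_z\hL_0)=0$, so the last term of $(\star)$ is genuinely separate from the sum and encodes the open/extended contribution rather than an $a=r$ summand.

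Finally I would prove $(\star)$ by differentiating $\d_{T_{n+r}}\phi_0=(\hL_0^{(n+r)/r})_+\big|_{z=v}$ in $T_m$, substituting $z=v$, and then using $\hL_0^{(n+r)/r}=\hL_0^{n/r}\hL_0$ together with the $\pm$ splitting to reorganize the resulting residues into closed two-point functions paired with open two-point functions $\frac{\d^2\phi_0}{\d T_b\,\d T_m}$ ($a+b=r$), while the contribution coming from evaluating at $z=v$ produces the separate $T_r$ term. I expect the crux to be precisely this reorganization: one needs a usable residue (or contour) representation of the open two-point functions $\frac{\d^2\phi_0}{\d T_b\,\d T_m}$, which --- unlike the closed ones --- are not polynomial in $z$ because of the substitution $z=v$, and one must reproduce the rational weights $\tfrac1{ab}$, $\tfrac1{n+r}$, $\tfrac1r$ exactly and single out the unique Ramond term, the pairing $a+b=r$ being the residue-theoretic avatar of the $r$-spin metric $\delta_{\mu+\nu,r-2}$. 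Tracking the $T_1$-integration constants throughout, via the grading in Lemma~\ref{lemma:properties of phi0}, is the remaining delicate point.
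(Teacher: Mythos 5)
Your overall strategy coincides with the paper's: pass to the $T_*$ variables (your $(\star)$ is, after multiplying by $n+r$, exactly the form the paper works with), use Lemma~\ref{lemma:open equations in genus 0} to reduce the statement to an identity for the symbol $\hL_0$, and split $d\hL_0^{(n+r)/r}=\tfrac{n+r}{r}\hL_0^{n/r}\,d\hL_0$ into its $\pm$ parts, with $(\hL_0^{n/r})_+\,d\hL_0$ accounting for the last term. Up to that point the proposal is sound (one small correction: that last term comes from the $(\hL_0^{n/r})_+$ summand of the splitting, not from the evaluation at $z=v$; the evaluation, with $dz\mapsto dv$, is applied uniformly to all terms, which is why one should prove the identity at the level of $z$-dependent one-forms before substituting).

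The gap is that the step you yourself flag as ``the crux'' is the entire content of the proposition, and you do not supply the ingredient that makes it close. Since $d\hL_0$ is a polynomial of degree $r-1$ in $z$, the remaining identity
$\bigl[(\hL_0^{n/r})_-\,d\hL_0\bigr]_+=\sum_{a+b=r}\tfrac{r}{ab}\tfrac{\d^2F_0^{1/r,c}}{\d T_n\d T_a}\,d(\hL_0^{b/r})_+$
amounts to identifying the negative tail of $\hL_0^{n/r}$ down to order $z^{-r}$ as the specific combination $\sum_{b=1}^{r-1}\tfrac{b}{n+r}\tfrac{\d}{\d v_b}\res\bigl(\hL_0^{(n+r)/r}\bigr)\hL_0^{(b-r)/r}$, where $v_b=\res\hL_0^{b/r}$. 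The paper obtains this from two classical facts in Dickey's book: the congruence
$\tfrac{a+r}{r}\bigl(\hL_0^{a/r}\bigr)_-\equiv\sum_{i=0}^{r-2}\tfrac{\d}{\d f^{[0]}_i}\res\bigl(\hL_0^{(a+r)/r}\bigr)z^{-i-1}$
modulo $z^{-r-1}\mbC[f^{[0]}_*][[z^{-1}]]$, and the formula $\tfrac{\d^2F_0^{1/r,c}}{\d T_a\d T_b}=\tfrac{b(r-b)}{a+r}\tfrac{\d}{\d v_{r-b}}\res\hL_0^{(a+r)/r}$, combined with the change of variables from the $f^{[0]}_i$ to the $v_b$. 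Your proposed residue representation of the closed two-point function is a plausible substitute for the second fact, but it only produces residues (single coefficients), whereas what is needed is control of an entire Laurent tail; without the first congruence (or an equivalent statement that the $v_b$ are the right coordinates in which $(\hL_0^{a/r})_-$ linearizes modulo $z^{-r-1}$), the reorganization with the weights $1/(ab)$ and the pairing $a+b=r$ does not go through.
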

\begin{proof}
In the variables $T_i$ equations~\eqref{eq:open TRR for phi0} look as follows:
\begin{gather*}
d\left(\frac{\d\phi_0}{\d T_{a+r}}\right)=\sum_{b=1}^{r-1}\frac{a+r}{b(r-b)}\frac{\d^2 F^{\frac{1}{r},c}_0}{\d T_a\d T_b}d\left(\frac{\d\phi_0}{\d T_{r-b}}\right)+\frac{a+r}{r}\frac{\d\phi_0}{\d T_a}d\left(\frac{\d\phi_0}{\d T_r}\right),\quad a\ge 1.
\end{gather*}
By Lemma~\ref{lemma:open equations in genus 0}, this equation follows from
\begin{gather}\label{eq:first TRR}
d\left(\hL_0^{\frac{a+r}{r}}\right)_+=\sum_{b=1}^{r-1}\frac{a+r}{b(r-b)}\frac{\d^2 F^{\frac{1}{r},c}_0}{\d T_a\d T_b}d\left(\hL_0^{\frac{r-b}{r}}\right)_++\frac{a+r}{r}\left(\hL_0^{\frac{a}{r}}\right)_+d\hL_0,\quad a\ge 1.
\end{gather}

Let
$$
v_i:=\res\left(\hL_0^{\frac{i}{r}}\right),\quad 1\le i\le r-1.
$$
Clearly, the functions $f^{[0]}_0,\ldots,f^{[0]}_{r-2}$ and $v_1,\ldots,v_{r-1}$ are related by an invertible polynomial transformation. Therefore, for any $i\in\mbZ$, the coefficients of the Laurent series $\hL_0^i$ can be considered as polynomials in $v_1,\ldots,v_{r-1}$. For $a\ge 1$ and $1\le b\le r-1$ we have the following identity (see, for example, \cite[Sections 3.5 and 3.6]{Dic03}):
$$
\frac{\d^2 F^{\frac{1}{r},c}_0}{\d T_a\d T_b}=\frac{b(r-b)}{a+r}\frac{\d}{\d v_{r-b}}\res\hL_0^{\frac{a+r}{r}}.
$$
We see that equation~\eqref{eq:first TRR} is equivalent to
\begin{gather}\label{eq:proof of first TRR1}
d\left(\hL^\frac{a+r}{r}_0\right)_+=\sum_{b=1}^{r-1}\frac{\d}{\d v_b}\res\left(\hL_0^\frac{a+r}{r}\right)d\left(\hL^\frac{b}{r}_0\right)_+ +\underline{\frac{a+r}{r}\left(\hL_0^\frac{a}{r}\right)_+ d \hL_0}.
\end{gather}
Let us express the left-hand side in the following way:
$$
d\left(\hL_0^\frac{a+r}{r}\right)_+=\underline{\frac{a+r}{r}\left(\hL_0^\frac{a}{r}\right)_+d\hL_0}+\frac{a+r}{r}\left(\left(\hL_0^\frac{a}{r}\right)_-d\hL_0\right)_+.
$$
Notice that the underlined term here cancels the underlined term on the right-hand side of~\eqref{eq:proof of first TRR1}. Therefore, equation~\eqref{eq:proof of first TRR1} is equivalent to
\begin{gather}\label{eq:proof of first TRR2}
\frac{a+r}{r}\left(\left(\hL_0^\frac{a}{r}\right)_-d\hL_0\right)_+=\sum_{b=1}^{r-1}\frac{\d}{\d v_b}\res\left(\hL_0^\frac{a+r}{r}\right)d\left(\hL^\frac{b}{r}_0\right)_+.
\end{gather}

We compute
$$
\sum_{b=1}^{r-1}\frac{\d}{\d v_b}\res\left(\hL_0^\frac{a+r}{r}\right)d\left(\hL_0^\frac{b}{r}\right)_+=\sum_{b=1}^{r-1}\frac{b}{r}\frac{\d}{\d v_b}\res\left(\hL_0^\frac{a+r}{r}\right)\left(\hL_0^\frac{b-r}{r}d\hL_0\right)_+.
$$
We see that equation~\eqref{eq:proof of first TRR2} follows from the property
\begin{gather}\label{eq:proof of first TRR3}
\frac{a+r}{r}\left(\hL_0^\frac{a}{r}\right)_--\sum_{b=1}^{r-1}\frac{b}{r}\frac{\d}{\d v_b}\res\left(\hL_0^\frac{a+r}{r}\right)\hL_0^\frac{b-r}{r}\in z^{-r-1}\mbC[f^{[0]}_0,\ldots,f^{[0]}_{r-2}][[z^{-1}]].
\end{gather}
Recall that (see, for example, \cite[Section 3.5]{Dic03})
\begin{gather}\label{eq:genus 0 identity from Dickey}
\frac{a+r}{r}\left(\hL_0^\frac{a}{r}\right)_--\sum_{i=0}^{r-2}\frac{\d}{\d f^{[0]}_i}\res\left(\hL_0^\frac{a+r}{r}\right)z^{-i-1}\in z^{-r-1}\mbC[f^{[0]}_0,\ldots,f^{[0]}_{r-2}][[z^{-1}]].
\end{gather}
For any two elements $f,g\in\mbC[f^{[0]}_0,\ldots,f^{[0]}_{r-2}][[z^{-1}]]$ let us write $f\equiv g$ if the difference $f-g$ lies in $z^{-r-1}\mbC[f^{[0]}_0,\ldots,f^{[0]}_{r-2}][[z^{-1}]]$. Then, using identity~\eqref{eq:genus 0 identity from Dickey}, we can compute
\begin{align*}
\frac{a+r}{r}\left(\hL_0^\frac{a}{r}\right)_-\equiv&\sum_{i=0}^{r-2}\frac{\d}{\d f^{[0]}_i}\res\left(\hL_0^\frac{a+r}{r}\right)z^{-i-1}\equiv\sum_{b=1}^{r-1}\sum_{i=0}^{r-2}\frac{\d}{\d v_b}\res\left(\hL_0^\frac{a+r}{r}\right)\frac{\d v_b}{\d f^{[0]}_i}z^{-i-1}\equiv\\
\equiv&\sum_{b=1}^{r-1}\frac{b}{r}\frac{\d}{\d v_b}\res\left(\hL_0^\frac{a+r}{r}\right)\left(\hL_0^{\frac{b-r}{r}}\right)_-\equiv\sum_{b=1}^{r-1}\frac{b}{r}\frac{\d}{\d v_b}\res\left(\hL_0^\frac{a+r}{r}\right)\hL_0^{\frac{b-r}{r}}.
\end{align*}
Formula~\eqref{eq:proof of first TRR3} is proved. This completes the proof of the proposition.
\end{proof}

\begin{lemma}\label{lemma:initial condition for special}
We have
\begin{gather}\label{eq:initial conditions for special}
\<\tau^1_0\tau^{r-2}_0\>^\phi_0=
\begin{cases}
1,&\text{if $r=2$},\\
\frac{1}{\sqrt{-r}},&\text{if $r\ge 3$},
\end{cases}
\qquad
\<\tau^1_0(\tau^{r-1}_0)^2\>^\phi_0=
\begin{cases}
1,&\text{if $r=2$},\\
\frac{1}{\sqrt{-r}},&\text{if $r\ge 3$}.
\end{cases}
\end{gather}
\end{lemma}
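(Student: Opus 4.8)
The plan is to evaluate both numbers directly from the definition, by rewriting them as partial derivatives of $\phi_0$ in the times $T_\ast$ at the origin and computing these through the genus-zero equations of Lemma~\ref{lemma:open equations in genus 0} together with the dispersionless Lax flow of Lemma~\ref{lemma:properties of L}(c). Under \eqref{eq:closed r-spin change of variables} and \eqref{eq:r-1 change} the insertion $\tau^1_0$ corresponds to $T_2$, the insertion $\tau^{r-2}_0$ to $T_{r-1}$, and $\tau^{r-1}_0$ to $T_r$, each carrying an explicit scalar $A_k=\big((-r)^{\frac{3k}{2(r+1)}-\frac12-d}\,k!_r\big)^{-1}$ (with the analogous factor from \eqref{eq:r-1 change} for $\tau^{r-1}_0$), so that $\<\tau^1_0\tau^{r-2}_0\>^\phi_0=A_2A_{r-1}\,\partial_{T_2}\partial_{T_{r-1}}\phi_0|_0$ and $\<\tau^1_0(\tau^{r-1}_0)^2\>^\phi_0=A_2A_r^2\,\partial_{T_2}\partial_{T_r}^2\phi_0|_0$. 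The two pieces of initial data are: first, $\Phi|_{T_{\ge2}=0}=1$ forces $\phi_0|_{T_{\ge2}=0}=0$, so $y:=(\phi_0)_x$ and every $f^{[0]}_i$ with $i\ge1$ vanish at the origin; and second, \eqref{eq:initial condition for L} gives $\hL_0|_0=z^r$ together with $\partial_{T_1}f^{[0]}_0|_0=r$ and $\partial_{T_1}f^{[0]}_i|_0=0$ for $i\ge1$.

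For the three-point number (taking $r\ge3$; when $r=3$ the two insertions coincide, $T_{r-1}=T_2$, and the same computation applies) I would expand $\partial_{T_{r-1}}\phi_0=(\hL_0^{(r-1)/r})_+|_{z=y}$, whose constant-in-$z$ term is $\tfrac{r-1}{r}f^{[0]}_1+O(f^2)$. Differentiating in $T_2$ and setting $T_\ast=0$, the $z$-derivative contribution drops because $y|_0=0$ and $\partial_{T_2}y|_0=0$ (it is proportional to $\<\tau^0_0\tau^1_0\>^\phi_0$, which vanishes by the dimension constraint \eqref{eq:phicor,property 2} for $r\ge3$); this leaves $\partial_{T_2}\partial_{T_{r-1}}\phi_0|_0=\tfrac{r-1}{r}\,\partial_{T_2}f^{[0]}_1|_0$. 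The only genuine computation is $\partial_{T_2}f^{[0]}_1|_0$, read off from $\partial_{T_2}\hL_0=\{(\hL_0^{2/r})_+,\hL_0\}$: since $(\hL_0^{2/r})_+=z^2+\tfrac2r f^{[0]}_{r-2}+O(f^2)$ and $\partial_{T_1}\hL_0|_0=r$, the bracket equals $2rz$ at the origin, whence $\partial_{T_2}f^{[0]}_1|_0=2r$ and $\partial_{T_2}\partial_{T_{r-1}}\phi_0|_0=2(r-1)$.

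For the four-point number I would write $\partial_{T_r}\phi_0=\hL_0|_{z=y}=y^r+\sum_i f^{[0]}_iy^i$ (as $(\hL_0)_+=\hL_0$) and take $\partial_{T_2}\partial_{T_r}$ at the origin. Since $y$ and all $f^{[0]}_i$ vanish at $0$ and $r\ge3$, the terms $y^r$ and $f^{[0]}_iy^i$ with $i\ge2$ contribute nothing, and $f^{[0]}_0$ contributes nothing because $\partial_{T_r}\hL_0=\{(\hL_0)_+,\hL_0\}=0$; only the bilinear part of $f^{[0]}_1y$ survives, giving $\partial_{T_2}\partial_{T_r}^2\phi_0|_0=\partial_{T_2}f^{[0]}_1|_0\cdot\partial_{T_r}y|_0+\partial_{T_r}f^{[0]}_1|_0\cdot\partial_{T_2}y|_0$. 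Here $\partial_{T_r}y|_0=\partial_{T_1}\partial_{T_r}\phi_0|_0=\partial_{T_1}f^{[0]}_0|_0=r$, while $\partial_{T_r}f^{[0]}_1=0$ and $\partial_{T_2}y|_0=0$ as above, so $\partial_{T_2}\partial_{T_r}^2\phi_0|_0=2r\cdot r=2r^2$. Substituting into the two formulas, a short bookkeeping shows the exponents of $(-r)$ collapse to $\tfrac12$ in each case ($\tfrac{3}{r+1}+\tfrac{3(r-1)}{2(r+1)}-1=\tfrac12$ for the first, $\tfrac{3}{r+1}+\tfrac{r-2}{r+1}-\tfrac12=\tfrac12$ for the second), so $A_2A_{r-1}\cdot2(r-1)=\tfrac{1}{\sqrt{-r}}$ and $A_2A_r^2\cdot2r^2=\tfrac{1}{\sqrt{-r}}$.

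Finally I would treat $r=2$ on its own: there $\tau^1_0=\tau^{r-1}_0$ and $\tau^{r-2}_0=\tau^0_0$, so the two correlators become $\<\tau^1_0\tau^0_0\>^\phi_0$ and $\<(\tau^1_0)^3\>^\phi_0$, with all insertions mapping to $T_1$ and $T_2$. Using $\partial_{T_2}\phi_0=y^2+f_0$ and $\partial_{T_1}f_0|_0=2$ gives $\partial_{T_1}\partial_{T_2}\phi_0|_0=2$, while $\partial_{T_2}f_0=0$ because $T_2=T_{1\cdot r}$ is a trivial Gelfand--Dickey time; assembling the scalars yields $1$ in both cases. The main obstacle is the four-point computation: controlling the second-order Taylor expansions of $\phi_0$ and of the symbol $\hL_0$ and verifying that only the single bilinear term contributes. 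The dimension constraint \eqref{eq:phicor,property 2}, which forces $\partial_{T_2}y|_0=0$ for $r\ge3$, is exactly what keeps these expansions finite, and the delicate accounting of the change-of-variables exponents is where the factor $\sqrt{-r}$ originates.
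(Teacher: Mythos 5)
Your proposal is correct and uses essentially the same method as the paper: both rest on Lemma~\ref{lemma:open equations in genus 0}, the dispersionless Lax flow of Lemma~\ref{lemma:properties of L}(c), and the initial condition $\hL_0|_{T_{\ge 2}=0}=z^r+rx$, arriving at the same intermediate values $2(r-1)$ and $2r^2$ before the change of variables. The only difference is which time you expand first (you expand $\d_{T_{r-1}}\phi_0$ and $\d_{T_r}\phi_0$ and then differentiate in $T_2$, whereas the paper expands $\d_{T_2}\phi_0=(\phi_0)_x^2+\tfrac{2}{r}f^{[0]}_{r-2}$ and differentiates in $T_{r-1}$ and $T_r$), which is immaterial by symmetry of mixed partials.
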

\begin{proof}
Let us prove the first equation in~\eqref{eq:initial conditions for special}. We compute
\begin{gather*}
\left.\frac{\d^2\phi_0}{\d T_2\d T_{r-1}}\right|_{T_*=0}=\left.\frac{\d}{\d T_{r-1}}\left((\phi_0)^2_x+\frac{2}{r}f^{[0]}_{r-2}\right)\right|_{T_*=0}=\frac{2}{r}\left.\frac{\d f^{[0]}_{r-2}}{\d T_{r-1}}\right|_{T_*=0}.
\end{gather*}
We proceed as follows:
\begin{align*}
\left.\frac{\d\hL_0}{\d T_{r-1}}\right|_{T_*=0}=&\left.\left\{\left(\hL_0^{\frac{r-1}{r}}\right)_+,\hL_0\right\}\right|_{T_*=0}=\left.\left\{\left((z^r+rx)^{\frac{r-1}{r}}\right)_+,z^r+rx\right\}\right|_{x=0}\\
=&\left.\left\{z^{r-1},z^r+rx\right\}\right|_{x=0}=r(r-1)z^{r-2}.
\end{align*}
Therefore, $\left.\frac{\d^2\phi_0}{\d T_2\d T_{r-1}}\right|_{T_*=0}=2(r-1)$. Applying changes of variables~\eqref{eq:closed r-spin change of variables} and~\eqref{eq:r-1 change}, we see that the first equation in~\eqref{eq:initial conditions for special} is true.

Now, let us prove the second equation in~\eqref{eq:initial conditions for special}. First of all, we compute
$$
\left.\frac{\d^2\phi_0}{\d T_1\d T_r}\right|_{T_*=0}=\left.\d_x((\phi_0)_x^r+rx)\right|_{T_*=0}=r.
$$
Therefore, we have
$$
\left.\frac{\d^3\phi_0}{\d T_2\d T_r^2}\right|_{T_*=0}=\left.\frac{\d^2}{\d T_r^2}\left((\phi_0)_x^2+\frac{2}{r}f^{[0]}_{r-2}\right)\right|_{T_*=0}=\left.2\left(\frac{\d^2\phi_0}{\d T_1\d T_r}\right)^2\right|_{T_*=0}=2r^2.
$$
Using equations~\eqref{eq:closed r-spin change of variables} and~\eqref{eq:r-1 change}, we can see that the second equation in~\eqref{eq:initial conditions for special} is also true.
\end{proof}

\begin{proof}[Proof of Theorem~\ref{theorem:closed extended and wave}]
We have seen (see Section~\ref{subsection:definition of extended} and Lemmas~\ref{lem:trr},~\ref{lemma:small closed extended correlators}) that the function~$F^{\frac{1}{r},\ext}_0$ and the correlators $\<\tau_0^{-1}\tau^{\alpha_1}_{d_1}\cdots\tau^{\alpha_n}_{d_n}\>^{\frac{1}{r},\ext}_0$ satisfy the following properties:
\begin{align}
&\<\tau_0^{-1}\tau^{\alpha_1}_{d_1}\cdots\tau^{\alpha_n}_{d_n}\>^{\frac{1}{r},\ext}_0=0\quad\text{unless}\quad\frac{\sum\alpha_i-(r-1)}{r}+\sum d_i=n-2,\label{eq:main property 1}\\
&\frac{\d^2F^{\frac{1}{r},\ext}_0}{\d t^\alpha_{p+1}\d t^\beta_q}=\sum_{\mu+\nu=r-2}\frac{\d^2 F^{\frac{1}{r},c}_0}{\d t^\alpha_p\d t^\mu_0}\frac{\d^2F^{\frac{1}{r},\ext}_0}{\d t^\nu_0\d t^\beta_q}+\frac{\d F^{\frac{1}{r},\ext}_0}{\d t^\alpha_p}\frac{\d^2F^{\frac{1}{r},\ext}_0}{\d t^{r-1}_0\d t^\beta_q},\quad0\le\alpha,\beta\le r-1,\quad p,q\ge 0,\label{eq:main property 2}\\
&\<\tau_0^{-1}\tau_0^1\tau_0^{r-2}\>^{\frac{1}{r},\ext}_0=1,\qquad \<\tau_0^{-1}\tau_0^1(\tau_0^{r-1})^2\>^{\frac{1}{r},\ext}_0=-\frac{1}{r}.\label{eq:main property 3}
\end{align}
It is easy to see that for any non-zero complex constant $C$, the function $\tphi_0$ defined by
$$
\tphi_0(t^{\le r-2}_*,t^{r-1}_*):=C\phi_0(t^{\le r-2}_*,C^{-1}t^{r-1}_*)
$$
also satisfies equations~\eqref{eq:open TRR for phi0}. Let $C=\sqrt{-r}$ and
$$
\<\tau^{\alpha_1}_{d_1}\cdots\tau^{\alpha_n}_{d_n}\>^{\tphi}_0:=\left.\frac{\d^n\tphi_0}{\d t^{\alpha_1}_{d_1}\cdots\d t^{\alpha_n}_{d_n}}\right|_{t^*_*=0}.
$$
Lemmas~\ref{lemma:properties of phi0},~\ref{lemma:initial condition for special} and Proposition~\ref{prop:open TRR for phi0} imply that the function $\tphi_0$ and the correlators $\<\tau^{\alpha_1}_{d_1}\cdots\tau^{\alpha_n}_{d_n}\>^{\tphi}_0$ also satisfy properties~\eqref{eq:main property 1} -- \eqref{eq:main property 3}. Therefore, it is sufficient to check that these properties are enough to reconstruct the function~$F^{\frac{1}{r},\ext}_0$.

Note that the dimension constraint in property~\eqref{eq:main property 1} implies that $n\ge 2$. Therefore, by~\eqref{eq:main property 2}, it suffices to determine the primary correlators
\begin{gather}\label{primary closed extended}
\<\tau^{-1}_0\tau^{\alpha_1}_0\cdots\tau^{\alpha_l}_0(\tau^{r-1}_0)^k\>^{\frac{1}{r},\ext}_0,\quad 0\le\alpha_1,\ldots,\alpha_l\le r-2.
\end{gather}
By~\eqref{eq:main property 1}, such a correlator can be non-zero only if
\begin{gather}\label{eq:dimension condition for primary}
\sum_{i=1}^l(r-\alpha_i)+k=r+1.
\end{gather}

First of all, suppose that $l\le 1$, so we consider the correlators
$$
X_\alpha:=\<\tau^{-1}_0\tau^\alpha_0(\tau^{r-1}_0)^{\alpha+1}\>^{\frac{1}{r},\ext}_0,\quad 0\le\alpha\le r-1.
$$
Suppose $0\le\gamma\le r-2$ and consider the correlator
\begin{gather*}
Y_\gamma:=\<\tau^{-1}_0\tau_1^1\tau^\gamma_0(\tau^{r-1}_0)^{\gamma+2}\>^{\frac{1}{r},\ext}_0.
\end{gather*}
Let us compute it using the topological recursion relation~\eqref{eq:main property 2} in two different ways. Applying~\eqref{eq:main property 2} with $\alpha=1$, $p=q=0$ and $\beta=\gamma$, we obtain
\begin{gather}\label{eq:one-point, left-hand side}
Y_\gamma={\gamma+2\choose 2}X_1 X_\gamma.
\end{gather}
On the other hand, applying~\eqref{eq:main property 2} with $\alpha=1$, $p=q=0$ and $\beta=r-1$, we get
\begin{gather}\label{eq:one-point, right-hand side}
Y_\gamma={\gamma+1\choose 2}X_1X_\gamma+\<\tau^1_0\tau^\gamma_0\tau^{r-3-\gamma}_0\>^{\frac{1}{r},\ext}_0 X_{\gamma+1}.
\end{gather}
For $\gamma\le r-3$, we have $\<\tau^1_0\tau^\gamma_0\tau^{r-3-\gamma}_0\>^{\frac{1}{r}}_0=1$ (see e.g.~\cite[Section~0.6]{PPZ}), and, by~\eqref{eq:main property 3}, $\<\tau^1_0\tau^{r-2}_0\tau^{-1}_0\>^{\frac{1}{r},\ext}_0=1$. Therefore, equating the right-hand sides of~\eqref{eq:one-point, left-hand side} and~\eqref{eq:one-point, right-hand side}, we obtain
$$
X_{\gamma+1}=(\gamma+1)X_1X_\gamma,\quad 0\le\gamma\le r-2.
$$
Since $X_1=-\frac{1}{r}$, this immediately implies that $X_\alpha=(-1)^\alpha\frac{\alpha!}{r^\alpha}$, for any $0\le\alpha\le r-1$.

Consider now the correlator~\eqref{primary closed extended} with $l\ge 2$. Suppose that condition~\eqref{eq:dimension condition for primary} is satisfied. Recall that by~$[l]$ we denote the set $\{1,2,\ldots,l\}$. For a subset $I\subset[l]$, let
\begin{gather*}
m_I:=r+1-\sum_{i\in I}(r-\alpha_i),\qquad A_I:=\<\tau^{-1}_0\left(\prod_{i\in I}\tau^{\alpha_i}_0\right)(\tau^{r-1}_0)^{m_I}\>^{\frac{1}{r},\ext}_0.
\end{gather*}
Consider the correlator
\begin{gather}\label{eq:auxiliary correlator for l>=2}
Z:=\<\tau^{-1}_0\tau_1^{\alpha_1}\left(\prod_{i=2}^l\tau^{\alpha_i}_0\right)(\tau^{r-1}_0)^{k+r}\>^{\frac{1}{r},\ext}_0.
\end{gather}
Applying~\eqref{eq:main property 2} with $\alpha=\alpha_1$, $p=q=0$ and $\beta=\alpha_l$, we get
\begin{gather}\label{proof,primary,extended closed:left-hand side}
Z=\sum_{\substack{I\sqcup J=[l]\\1\in I,\,l\in J}}\sum_{\mu+\nu=r-2}\<\left(\prod_{i\in I}\tau^{\alpha_i}_0\right)\tau^\mu_0\>^{\frac{1}{r}}_0\underline{\<\tau^{-1}_0\tau^\nu_0\left(\prod_{j\in J}\tau^{\alpha_j}_0\right)(\tau^{r-1}_0)^{k+r}\>^{\frac{1}{r},\ext}_0}+\sum_{\substack{I\sqcup J=[l]\\1\in I,\,l\in J}}{r+k\choose m_I}A_I A_J.
\end{gather}
Note that the underlined term on the right-hand side of this equation vanishes, because otherwise we should have
\begin{multline*}
r-\nu+\sum_{j\in J}(r-\alpha_j)+k+r=r+1\Rightarrow\sum_{i\in I}(r-\alpha_i)=r+(r-\nu)\Rightarrow\\\Rightarrow\sum_{i\in I}(r-\alpha_i)\ge r+2\Rightarrow\sum_{i=1}^l(r-\alpha_i)\ge r+2,
\end{multline*}
which contradicts~\eqref{eq:dimension condition for primary}. On the other hand, applying to the correlator~\eqref{eq:auxiliary correlator for l>=2} relation~\eqref{eq:main property 2} with $\alpha=\alpha_1$, $p=q=0$, and $\beta=r-1$, we obtain
\begin{align}
Z=&\sum_{\substack{I\sqcup J=[l]\\1\in I}}{r+k-1\choose m_I}A_IA_J=\notag\\
=&\sum_{\substack{I\sqcup J=[l]\\1\in I,\,l\in J}}{r+k-1\choose m_I}A_IA_J+\sum_{\substack{I\sqcup J=[l]\\1,l\in I,\,J\ne\emptyset}}{r+k-1\choose m_I}A_IA_J+{r+k-1\choose k}A_{[l]}\<\tau^{-1}_0(\tau^{r-1}_0)^{r+1}\>^{\frac{1}{r},\ext}_0.\label{proof,primary,extended closed:right-hand side}
\end{align}
Note that here, by the same argument as above, we also do not have terms with closed $r$-spin correlators. Equating the right-hand side of~\eqref{proof,primary,extended closed:left-hand side} and expression~\eqref{proof,primary,extended closed:right-hand side} and using that $\<\tau^{-1}_0(\tau^{r-1}_0)^{r+1}\>^{\frac{1}{r},\ext}_0=(-1)^{r-1}\frac{(r-1)!}{r^{r-1}}$, we get
\begin{gather}\label{recursion for primary extended closed}
(-1)^{r-1}\frac{(r+k-1)!}{k!r^{r-1}}A_{[l]}=\sum_{\substack{I\sqcup J=[l]\\1\in I,\,l\in J}}{r+k-1\choose m_I-1}A_IA_J-\sum_{\substack{I\sqcup J=[l]\\1,l\in I,\,J\ne\emptyset}}{r+k-1\choose m_I}A_IA_J.
\end{gather}
We see that for $l\ge 2$, this equation allows one to compute the primary correlator~\eqref{primary closed extended} in terms of primary correlators with smaller~$l$. The theorem is proved.
\end{proof}

\subsection{Lax operator and the closed extended potential}\label{subsection:Lax operator and closed extended}

Let us show that the operator~$L_0|_{t^*_{\ge 1}=0}$ has a simple interpretation in terms of the function $F^{\frac{1}{r},\ext}_0$. Define the primary closed extended potential in genus $0$ by
$$
\mathcal{F}^{\frac{1}{r},\ext}_0(t^0_0,\ldots,t^{r-1}_0):=\left.F_0^{\frac{1}{r},\ext}\right|_{t^*_{\ge 1}=0}.
$$
\begin{prop}
We have
\begin{gather}\label{eq:extended and Lax}
\frac{\d\mathcal{F}^{\frac{1}{r},\ext}_0}{\d t^{r-1}_0}=\frac{1}{(-r)^{\frac{r-2}{2(r+1)}}r}\left.\hL_0\right|_{\substack{z=(-r)^{\frac{1-2r}{2(r+1)}}t^{r-1}_0\\t^*_{\ge 1}=0}}.
\end{gather}
\end{prop}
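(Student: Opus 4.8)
The plan is to combine the main theorem (Theorem~\ref{theorem:closed extended and wave}) with the genus-zero flow equation of Lemma~\ref{lemma:open equations in genus 0} at $n=r$, and then to pin down the two unknowns that appear---the coefficients of $\hL_0$ and the value substituted for $z=(\phi_0)_x$---using the $T_r$-independence of $\hL_0$ and the genus-zero string equation, respectively.

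First I would differentiate the identity of Theorem~\ref{theorem:closed extended and wave},
\[
F_0^{\frac{1}{r},\ext}(t^{\le r-2}_*,t^{r-1}_*)=\sqrt{-r}\,\phi_0\!\left(t^{\le r-2}_*,\tfrac{1}{\sqrt{-r}}t^{r-1}_*\right),
\]
with respect to $t^{r-1}_0$. The overall factor $\sqrt{-r}$ and the chain-rule factor $\tfrac{1}{\sqrt{-r}}$ cancel, so that $\frac{\d F_0^{\frac{1}{r},\ext}}{\d t^{r-1}_0}$ equals $\frac{\d\phi_0}{\d t^{r-1}_0}$ evaluated at the rescaled argument. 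Next, since under the change of variables~\eqref{eq:r-1 change} only $T_r$ depends on $t^{r-1}_0$, with $\frac{\d T_r}{\d t^{r-1}_0}=\frac{1}{(-r)^{\frac{r-2}{2(r+1)}}r}$, I would rewrite $\frac{\d\phi_0}{\d t^{r-1}_0}=\frac{1}{(-r)^{\frac{r-2}{2(r+1)}}r}\frac{\d\phi_0}{\d T_r}$. Finally, Lemma~\ref{lemma:open equations in genus 0} at $n=r$ gives $\frac{\d\phi_0}{\d T_r}=\hL_0|_{z=(\phi_0)_x}$, because $\hL_0$ is already a polynomial in $z$ and hence $(\hL_0^{r/r})_+=(\hL_0)_+=\hL_0$. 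This already produces the prefactor $\frac{1}{(-r)^{\frac{r-2}{2(r+1)}}r}$ of the claimed formula.

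It then remains to evaluate at $t^*_{\ge 1}=0$ (with the $t^{r-1}$-argument rescaled) and to match both the coefficients of $\hL_0$ and the value substituted for $z$. For the coefficients I would observe that by Lemma~\ref{lemma:properties of L}(c), $\frac{\d\hL_0}{\d T_r}=\{(\hL_0)_+,\hL_0\}=\{\hL_0,\hL_0\}=0$, so $\hL_0$ is independent of $T_r$. Since $t^*_{\ge 1}=0$ corresponds exactly to $T_{\ge r+1}=0$ (with $T_1,\dots,T_r$ free), and the rescaling touches only $T_r$ among the surviving variables, the coefficients $f_i^{[0]}$ at the rescaled evaluation point agree with those of $\hL_0|_{t^*_{\ge 1}=0}$, viewed as functions of $t^0_0,\dots,t^{r-2}_0$. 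For the substituted value I would use the genus-zero part of the string equation of Lemma~\ref{lemma:string equation for phi}: extracting the coefficient of $\eps^{-1}$ yields $\bigl(\frac{\d}{\d T_1}-\sum_{i\ge 1}(i+r)T_{i+r}\frac{\d}{\d T_i}\bigr)\phi_0=rT_r$, and at $T_{\ge r+1}=0$ every $T_{i+r}$ with $i\ge 1$ vanishes, so $(\phi_0)_x=\frac{\d\phi_0}{\d T_1}=rT_r$ there. Substituting the rescaled value of $T_r$ and simplifying the exponent, $-\frac{r-2}{2(r+1)}-\frac12=\frac{1-2r}{2(r+1)}$, gives $(\phi_0)_x=(-r)^{\frac{1-2r}{2(r+1)}}t^{r-1}_0$, which is precisely the substitution for $z$ in the statement.

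I expect the main obstacle to be the careful bookkeeping of the two changes of variables~\eqref{eq:closed r-spin change of variables} and~\eqref{eq:r-1 change} together with the rescaling built into Theorem~\ref{theorem:closed extended and wave}: one must verify that only $T_r$ is affected, so that the $T_r$-independence of $\hL_0$ can be invoked to ignore the rescaling on its coefficients, and that the various powers of $(-r)$ collapse to the single exponent $\frac{1-2r}{2(r+1)}$. The conceptual inputs---the $n=r$ flow, the vanishing Poisson self-bracket, and the string equation restricted to $T_{\ge r+1}=0$---are each short, so the only real risk lies in tracking the constants.
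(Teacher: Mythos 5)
Your proposal is correct and follows essentially the same route as the paper's proof: translate the claim via Theorem~\ref{theorem:closed extended and wave} into the statement $\left.\frac{\d\phi_0}{\d T_r}\right|_{T_{\ge r+1}=0}=\hL_0|_{z=rT_r,\,T_{\ge r+1}=0}$, which follows from Lemma~\ref{lemma:open equations in genus 0} at $n=r$ together with the string equation giving $(\phi_0)_x|_{T_{\ge r+1}=0}=rT_r$. The only difference is that you spell out the bookkeeping of the constants and the $T_r$-independence of $\hL_0$, which the paper leaves implicit.
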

\begin{proof}
Using~Theorem~\ref{theorem:closed extended and wave} we see that equation~\eqref{eq:extended and Lax} is equivalent to the equation
\begin{gather*}
\left.\frac{\d\phi_0}{\d T_r}\right|_{T_{\ge r+1}=0}=\left.\hL_0\right|_{\substack{z=rT_r\\T_{\ge r+1}=0}},
\end{gather*}
that follows from Lemma~\ref{lemma:open equations in genus 0} and the fact that, by the string equation~\eqref{eq:string equation for phi}, $\left.(\phi_0)_x\right|_{T_{\ge r+1}=0}=rT_r$.
\end{proof}

\subsection{Main conjecture}\label{subsection:main conjecture}

We conjecture that for any $b\ge 1$, $g,n\ge 0$, $0\le\alpha_1,\ldots,\alpha_n\le r-1$, and $d_1,\ldots,d_n\ge 0$, there is a geometric construction of correlators
$$
\<\tau^{\alpha_1}_{d_1}\cdots\tau^{\alpha_n}_{d_n}(\tau^{-1}_0)^b\>^{\frac{1}{r},\ext}_g,
$$
generalizing the construction for $b=1$ and $g=0$.  Given such correlators, define a generating series~$F_g^{\frac{1}{r},\ext}(t^*_*)$ for any $g \geq 0$ by
$$
F_g^{\frac{1}{r},\ext}(t^*_*):=\sum_{\substack{h\ge 0,\,b\ge 1\\2h+b-1=g}}\sum_{n\ge 0}\frac{1}{b!n!}\sum_{\substack{0\le\alpha_1,\ldots,\alpha_n\le r-1\\d_1,\ldots,d_n\ge 0}}\<\tau^{\alpha_1}_{d_1}\cdots\tau^{\alpha_n}_{d_n}(\tau^{-1}_0)^b\>^{\frac{1}{r},\ext}_h t^{\alpha_1}_{d_1}\cdots t^{\alpha_n}_{d_n}.
$$
\begin{conj}
For any $g\ge 0$,
$$
F^{\frac{1}{r},\ext}_g=(-r)^{\frac{1-g}{2}}\phi_g\left(t^{\le r-2}_*,\frac{1}{\sqrt{-r}}t^{r-1}_*\right).
$$
\end{conj}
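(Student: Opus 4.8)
The plan is to promote the entire genus-zero argument behind Theorem~\ref{theorem:closed extended and wave} to arbitrary genus, the essential difference being that one must first \emph{supply} the geometric side that the conjecture takes for granted. Recall the skeleton of the genus-zero proof: one establishes (i) a dimension constraint, (ii) the topological recursion relation~\eqref{eq:main property 2} expressing $\partial^2 F^{\frac{1}{r},\ext}_0$ through lower correlators together with the closed potential $F^{\frac{1}{r},c}_0$, and (iii) the two base cases~\eqref{eq:main property 3}; a pure recursion/uniqueness argument then fixes all correlators, and the matching function $\tphi_0$ is seen to satisfy the same data. Accordingly I would (a) construct the higher-genus extended correlators $\<\tau^{\alpha_1}_{d_1}\cdots\tau^{\alpha_n}_{d_n}(\tau^{-1}_0)^b\>^{\frac{1}{r},\ext}_h$, (b) prove the geometric analogues of Lemmas~\ref{lem:decomposition} and~\ref{lem:forgetful} and deduce all-genus TRRs and a string equation, (c) show $\phi_g$ satisfies the matching recursions, and (d) conclude by induction on genus.

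For (a)–(b), the correlators should live on $\M^{1/r}_{h,\{\alpha_1,\ldots,\alpha_n,-1,\ldots,-1\}}$ with $b$ legs of twist $-1$. The new feature relative to the case $b=1$, $h=0$ treated in the body is that for $h>0$ or $b>1$ the sheaf $R^1\pi_*\mcS$ is no longer locally free and $R^0\pi_*\mcS$ need not vanish, so the Witten class must be taken in the virtual sense via any of the constructions of~\cite{PV,ChiodoWitten,Moc06,FJR,CLL} extended to the range~\eqref{eq:range}. \textbf{This is where the serious mathematical content and the main obstacle reside:} one must show that such virtual classes are compatible with the gluing morphisms in~\eqref{eq:gluing sequence} and with the forgetful map in the presence of negative twists---precisely the ``geometric construction'' the conjecture asserts but does not provide. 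Granting such a construction, the decomposition property~\eqref{eq:decomp} and the forgetful-pullback property should follow from the same normalization and universal-line-bundle exact-sequence arguments used in Lemmas~\ref{lem:decomposition} and~\ref{lem:forgetful}, after which the TRRs and string equation come out as in Lemmas~\ref{lem:trr} and~\ref{lem:str}.

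For (c), I would expand the wave-function equations~\eqref{eq:equations for phi} in $\eps$ beyond the leading order used in Lemma~\ref{lemma:open equations in genus 0}. Writing $L^{n/r}=\sum_{i\le n}R_i\d_x^i$ with $R_i=\sum_{j\ge 0}R_{i,j}\eps^{i-n+j}$ and $\phi=\sum_g\eps^{g-1}\phi_g$, the coefficient of $\eps^{g-1}$ in $\frac{\d\phi}{\d T_n}=\eps^{n-1}(L^{n/r})_+e^{\phi}/e^{\phi}$ yields a recursion for $\frac{\d\phi_g}{\d T_n}$ in terms of the $\phi_h$ with $h\le g$, the subleading Lax coefficients $R_{i,j}$ (which by~\cite{FSZ10} are governed by the higher-genus parts of $F^{\frac{1}{r},c}$), and the genus-zero datum $(\phi_0)_x$. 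Combined with the dimension constraints of Lemma~\ref{lemma:properties of phi0} and the all-genus string equation of Lemma~\ref{lemma:string equation for phi}, this should produce the genus-$g$ analogues of Proposition~\ref{prop:open TRR for phi0} in the normalized variables dictated by the rescaling $t^{r-1}_*\mapsto\frac{1}{\sqrt{-r}}t^{r-1}_*$.

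For (d), I would run the uniqueness argument of the proof of Theorem~\ref{theorem:closed extended and wave} by a double induction on the genus $g$ and on the number of insertions, with base case $g=0$ being the theorem itself; each recursion reduces a genus-$g$ extended correlator to correlators of strictly smaller complexity together with closed correlators already determined by~\cite{FSZ10}. The prefactor $(-r)^{\frac{1-g}{2}}$ and the variable rescaling would be pinned down exactly as the constant $C=\sqrt{-r}$ was in genus zero---by matching the explicit base data (generalizing Lemma~\ref{lemma:small closed extended correlators}) and tracking the normalization through the recursion. To summarize, the algebraic matching in steps (c)–(d) should be routine once the genus-zero template is in hand; the crux of the conjecture is step (a)–(b), namely constructing the higher-genus extended theory and proving that its virtual Witten class decomposes correctly along nodes and behaves correctly under forgetful maps when insertions of twist $-1$ are present.
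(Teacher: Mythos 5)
The statement you are addressing is presented in the paper as a conjecture, not a theorem: the paper contains no proof of it, and in fact the generating series $F^{\frac{1}{r},\ext}_g$ for $g\ge 1$ is only \emph{defined} conditionally on a geometric construction of the correlators $\<\tau^{\alpha_1}_{d_1}\cdots\tau^{\alpha_n}_{d_n}(\tau^{-1}_0)^b\>^{\frac{1}{r},\ext}_h$ that is not carried out there. So there is no proof to compare yours against, and your proposal cannot be accepted as a proof: your steps (a)--(b) are not a fillable gap in an otherwise complete argument, they are the open problem itself. Already for $h=0$ and $b=2$ one can have $R^0\pi_*\mcS\neq 0$ (two twists equal to $-1$ and all remaining twists zero force $\deg S=0$ on a genus-zero component), so none of the genus-zero arguments of Section~\ref{sec:geo} apply; it is not known that the virtual constructions of \cite{PV,ChiodoWitten,Moc06,FJR,CLL} extend to negative twists, nor that any such extension would satisfy the analogues of the decomposition and forgetful properties of Lemmas~\ref{lem:decomposition} and~\ref{lem:forgetful}.

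There is a second, independent gap in your step (c). The genus-zero argument is not simply a leading-order expansion that can be pushed to higher orders: Proposition~\ref{prop:open TRR for phi0} rests on the dispersionless Lax formalism, in particular on the identity~\eqref{eq:genus 0 identity from Dickey}, and the geometric recursions of Lemma~\ref{lem:trr} rest on the fact that $\psi_i$ on $\oM_{0,n}$ is a sum of boundary divisors. Neither has a higher-genus analogue: $\psi$-classes on $\oM_{g,n}$ for $g\ge 1$ are not supported on the boundary, so there are no all-genus relations of the form~\eqref{eq:main property 2} that would determine the theory from primary correlators, and the subleading $\eps$-coefficients of $(L^{n/r})_+e^\phi/e^\phi$ mix all lower genera in a way not controlled by any known closed-form recursion. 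This is precisely why, as the paper notes, even for $r=2$ the all-genus wave-function statement required the substantially different arguments of \cite{Bur16,BT17}, and why for general $r$ the statement remains a conjecture. Your write-up is a fair description of what a proof would have to contain, but both the geometric input (a)--(b) and the algebraic recursions (c)--(d) are unestablished.
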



\section{Open--closed correspondence}\label{sec:o-c corr}

In the forthcoming work~\cite{BCT}, the authors generalize the definition of the moduli space of $r$-spin structures and its Witten class to the setting of genus-zero surfaces with boundary, or {\it disks}.  We summarize the construction of \cite{BCT} in this section and explain the connection between open and closed extended theory.

\subsection{Open $r$-spin theory}

A Riemann surface with boundary is a tuple $(C, \phi, \Sigma, \{z_i\}, \{x_j\})$, where $C$ is an orbifold curve equipped with an involution $\phi: C \rightarrow C$ that realizes $|C|$ as a union of two copies of the Riemann surface with boundary $\Sigma$, glued along their common boundary; we write
\[|C| = \Sigma \cup_{\d \Sigma} \overline{\Sigma}.\]
Here, $z_1, \ldots, z_n \in C$ are the {\it internal marked points} (whose images in $|C|$ lie in $\Sigma \setminus \d\Sigma$, and each of which has a partner $\overline{z}_i := \phi(z_i)$ whose image lies in  $\overline{\Sigma}$) and $x_1, \ldots, x_m \in \d\Sigma$ are the {\it boundary marked points}.  We define a {\it graded $r$-spin structure} on a Riemann surface with boundary as an orbifold line bundle $S$ on $C$ together with an isomorphism
\[S^{\otimes r} \cong \omega_C \otimes \O\left(-\sum_{i=1}^n \alpha_i[z_i] - \sum_{i=1}^n \alpha_i[\overline{z}_i] - \sum_{j=1}^m (r-2)[x_j]\right),\]
an involution $\widetilde{\phi}:S \rightarrow S$ lifting $\phi$, and an additional structure that we refer to as a {\it grading}.  (Roughly, a grading is an involution-invariant section of $S$ on the complement of the special points in $\d\Sigma$ that changes sign at each boundary marked point, this notion was first defined for $r=2$ in \cite{ST0}.)  Here, we assume that the internal twists $\alpha_1, \ldots, \alpha_n$ lie in the range $\{0,1,\ldots, r-1\}$.  Note that we can re-express the genus in terms of the number $b$ of boundary components of $\Sigma$ and the genus $g_c$ of the closed surface obtained from $\Sigma$ by gluing a disk to each boundary component:
\[g = b + g_c -1.\]
For example, a disk has $g = g_c =0$ and $b=1$.

In \cite{BCT} we construct the moduli space $\M^{1/r}_{0,m,\{\alpha_1, \ldots, \alpha_n\}}$ parameterizing graded $r$-spin disks, and prove that it is nonempty exactly when a certain congruence condition on the twists $\alpha_i$ is satisfied.  There should be no difficulty with constructing the all-genus generalization $\M^{1/r}_{g,m,\{\alpha_1, \ldots, \alpha_n\}}$ and this space is nonempty if and only if
\begin{equation}\label{eq:open_rk_constraints}
e_o: = \frac{(g+m-1)(r-2)+2\sum_{i=1}^n\alpha_i}{r}\in \mathbb{N} \;\;\;\; \text{ and } \;\;\;\; e_o\equiv1+m+g \mod 2.
\end{equation}

There are line bundles $\mathbb{L}_i$ for each $i=1, \ldots, n$, defined by the cotangent line to the orbifold curve $C$ at the $i$th internal marked point.  Furthermore, in genus zero, there is a real analogue of Witten's bundle,
\[\mathcal{W} := (R^0\pi_*(\mathcal{S}^{\vee} \otimes \omega_{\pi}))_+,\]
a real-rank-$e_o$ bundle over $\M^{1/r}_{0,m,\{\alpha_1, \ldots, \alpha_n\}}$ whose fiber over $(C, \phi,\Sigma,\{z_i\},\{x_j\},S,\widetilde\phi)$ consists of $\widetilde{\phi}$-invariant sections of $S^{\vee} \otimes \omega_C$.

Because $\M^{1/r}_{0,m,\{\alpha_1, \ldots, \alpha_n\}}$ has boundary, one must work with a relative version of the Chern classes of the cotangent line bundles and the Witten bundles.  In \cite{BCT} the requisite canonical boundary conditions are defined over the space $\overline{\mathcal{PM}}^{1/r}_{0,m,\{\alpha_1, \ldots, \alpha_n\}},$ which is some canonical perturbation of the space $\M^{1/r}_{0,m,\{\alpha_1, \ldots, \alpha_n\}}.$ From here, we define open $r$-spin correlators by
\begin{equation}
\label{eq:correlators}
\left\langle\prod_{i=1}^n\tau_{d_i}^{\alpha_i}\;\sigma^m\right\rangle^{\frac{1}{r},o}_{0} :=
\int_{\overline{\mathcal{PM}}^{1/r}_{0,m,\{\alpha_1, \ldots, \alpha_n\}}}e\left(\cW\oplus\bigoplus_{i=1}^n\CL_i^{\oplus d_i},s_{\text{canonical}}\right),
\end{equation}
where $e(E,s)$ is the Euler class relative to the canonical boundary conditions $s$.  Alternatively, the number in \eqref{eq:correlators} can be defined as a weighted, signed count of the number of zeroes of a generic extension of $s_{\text{canonical}}$, defining this count to be zero unless $e_o+2\sum_{i=1}^n d_i = m+2n-3+3g$, or in other words, unless
\begin{equation}\label{eq:when_int_is_number}
e_o+2\sum_{i=1}^n d_i = m+2n - 6 + 3g_c+3b.
\end{equation}
One of the key results of \cite{BCT} is that these intersection numbers are independent of the specific choice of $s_{\text{canonical}}$.

We define a generating function for genus-zero open $r$-spin theory by
\[F_0^{\frac{1}{r},o}(t_*^*, s) := \sum_{\substack{n,m\ge 0\\2n+m-2>0}} \sum_{\substack{ 0 \leq \alpha_1, \ldots, \alpha_n \leq r-1\\ d_1, \ldots, d_n \geq 0}} \frac{1}{n!m!} \left\langle \prod_{i=1}^n \tau_{d_i}^{\alpha_i} \sigma^m \right\rangle_0^{\frac{1}{r},o} t_{d_1}^{\alpha_1} \cdots t_{d_n}^{\alpha_n} s^m.\]

\begin{rmk}
We caution the reader, that, similarly to the closed extended theory, the Ramond vanishing property doesn't hold for the open $r$-spin correlators.
\end{rmk}

\begin{rmk}
In the case where $r=2$ and all of the insertions are Neveu--Schwarz, open $r$-spin theory is equivalent to the intersection theory of disks constructed by Pandharipande, Solomon, and the third author in \cite{PST14}.  This is currently the only case in which we can extend the theory to higher genus \cite{ST1}, calculate all numbers \cite{Tes15}, and prove the relationship to the wave function in all genus \cite{Bur16, BT17}.
\end{rmk}

\subsection{Connection to closed extended theory}

In \cite[Theorem~1.3]{BCT}, we prove that the genus-zero open $r$-spin potential is related to the closed extended potential in the following way:
\begin{gather*}
F^{\frac{1}{r},o}_0(t^0_*,t^1_*,\ldots,t^{r-1}_*,s)=-\frac{1}{r}\left.F^{\frac{1}{r},\ext}_0\right|_{t^{r-1}_d\mapsto t^{r-1}_d-r\delta_{d,0}s}+\frac{1}{r}F^{\frac{1}{r},\ext}_0.
\end{gather*}
We currently do not know of a geometric explanation for the intimate relation between these two theories.  Nevertheless, let us explore more explicitly what is known.

Heuristically, the dictionary between closed extended and open $r$-spin theory is given by
\begin{enumerate}
\item replacing a marked point with twist $-1$ by a boundary component, and
\item replacing a marked point with twist $r-1$ by a boundary marked point.
\end{enumerate}
To make this more precise, we first observe that these exchanges are compatible with the rank-dimension constraints for the two theories.  That is, replacing every boundary component in the open theory with a marked point of twist $-1$ converts equation~\eqref{eq:when_int_is_number} into equation~\eqref{eq:when_int_is_number_closed_extended}, and replacing an internal marked point of twist $r-1$ by a boundary marked point leaves~\eqref{eq:when_int_is_number} invariant.\footnote{These observations are mostly numerological at this point, since the open theory is currently only defined for disks and the closed extended theory in genus zero with a single $-1$ twist.}  Moreover, at the level of intersection numbers, we have the relation
\begin{gather*}
\<\prod_{i=1}^n\tau^{\alpha_i}_{d_i}\sigma^m\>^{\frac{1}{r},o}_0=
\begin{cases}
0,&\text{if $m=0$},\\
(-r)^{m-1}\left\langle\tau_0^{-1}\displaystyle\prod_{i=1}^n\tau^{\alpha_i}_{d_i}(\tau^{r-1}_0)^m\right\rangle^{\frac{1}{r},\ext}_0&\text{if $m\ge 1$},
\end{cases}
\end{gather*}
which realizes the above dictionary when there is a single boundary component.

Moreover, this dictionary matches the topological recursion relations in genus zero.  In \cite{BCT}, we prove two topological recursion relations for open $r$-spin theory.  First, for any $i\in[n]$ with $d_i>0$ and any $j\in([n]\setminus\{i\})$, we have
\begin{align}\label{eq:open_int_trr1}
\<\prod_{l\in [n]}\tau_{d_l}^{\alpha_l}\sigma^m\>^{\frac{1}{r},o}_0=&\sum_{\substack{I\coprod J = [n]\backslash\{i\}\\j\in J}}\sum_{\alpha=-1}^{r-2}\<\tau_0^{\alpha}\tau_{d_i-1}^{\alpha_i}\prod_{l\in I}\tau_{d_l}^{\alpha_l}\>^{\frac{1}{r},\ext}_0\<\tau_0^{r-2-\alpha}\prod_{l\in J}\tau_{d_l}^{\alpha_l}\sigma^m\>^{\frac{1}{r},o}_0+\\
&+\sum_{\substack{I\coprod J = [n]\setminus\{i\}\\m_1+m_2=m\\j\in J}}\frac{m!}{m_1!m_2!}\<\tau_{d_i-1}^{\alpha_i}\prod_{l\in I}\tau_{d_l}^{\alpha_l}\sigma^{m_1}\>^{\frac{1}{r},o}_0\<\sigma\prod_{l\in J}\tau_{d_l}^{\alpha_l}\sigma^{m_2}\>^{\frac{1}{r},o}_0\notag.
\end{align}
Second, if $m\ge 1$, then for any $i\in[n]$ with $d_i>0$, we have
\begin{align}\label{eq:open_int_trr2}
\<\prod_{l\in [n]}\tau_{d_l}^{\alpha_l}\sigma^m\>^{\frac{1}{r},o}_0=&\sum_{I\coprod J = [n]\backslash\{i\}}\sum_{\alpha=-1}^{r-2}\<\tau_0^{\alpha}\tau_{d_i-1}^{\alpha_i}\prod_{l\in I}\tau_{d_l}^{\alpha_l}\>^{\frac{1}{r},\ext}_0\<\tau_0^{r-2-\alpha}\prod_{l\in J}\tau_{d_l}^{\alpha_l}\sigma^m\>^{\frac{1}{r},o}_0+\\
&+\sum_{\substack{I\coprod J = [n]\setminus\{i\}\\m_1+m_2=m-1}}\frac{(m-1)!}{m_1!m_2!}\<\tau_{d_i-1}^{\alpha_i}\prod_{l\in I}\tau_{d_l}^{\alpha_l}\sigma^{m_1}\>^{\frac{1}{r},o}_0\<\sigma\prod_{l\in J}\tau_{d_l}^{\alpha_l}\sigma^{m_2+1}\>^{\frac{1}{r},o}_0\notag.
\end{align}
Under the above dictionary, every term on the right-hand side of~\eqref{eq:open_int_trr1} or~\eqref{eq:open_int_trr2} corresponds to a single term on the right-hand side of~\eqref{eq:NS_TRR} or~\eqref{eq:Ramond_TRR} with $m$ Ramond marked points.


Since marked points of twist $r-1$ in closed extended theory may have descendents, one would expect the open-closed correspondence to generalize to that setting.  In \cite{Bur15}, the first author conjectured the precise equations that the open theory for $r=2$ should satisfy if it incorporates boundary descendents.  The construction of boundary descendents when $r=2$ and all insertions are Neveu--Schwarz, was carried by Solomon and the third author, and will appear in the near future. It is also known how to construct these descendents for any $r$ in genus zero, and that will also appear in a forthcoming work.  The resulting topological recursion relations in genus zero are as follows.  First, if $i \in [m]$, $b_i>0$, and $j \in [n]$, then
\begin{gather}\label{eq:open_boundary_point_trr1}
\<\prod_{h\in [m]}\sigma_{b_h}\prod_{l\in [n]}\tau_{d_l}^{\alpha_l}\>^{\frac{1}{r},o}_0=\sum_{\substack{K_I\coprod K_J=[m]\backslash\{i\}\\I\coprod J = [n]\\j\in J}}\<\sigma_{b_i-1}\prod_{h\in K_I}\sigma_{b_h}\prod_{l\in I}\tau_{d_l}^{\alpha_l}\>^{\frac{1}{r},o}_0\<\sigma\prod_{h\in K_J}\sigma_{b_h}\prod_{l\in J}\tau_{d_l}^{\alpha_l}\>^{\frac{1}{r},o}_0,
\end{gather}
where $\sigma_{b}$ corresponds to $b$ descendents at a boundary marked point; and second, if $i\in[m]$, $b_i>0$, and $j\in[m]\backslash\{i\}$, then
\begin{gather}\label{eq:open_boundary_point_trr2}
\<\prod_{h\in [m]}\sigma_{b_h}\prod_{l\in [n]}\tau_{d_l}^{\alpha_l}\>^{\frac{1}{r},o}_0=\sum_{\substack{K_I\coprod K_J=[m]\backslash\{i\}\\I\coprod J = [n]\\j\in K_J}}\<\sigma_{b_i-1}\prod_{h\in K_I}\sigma_{b_h}\prod_{l\in I}\tau_{d_l}^{\alpha_l}\>^{\frac{1}{r},o}_0\<\sigma\prod_{h\in K_J}\sigma_{b_h}\prod_{l\in J}\tau_{d_l}^{\alpha_l}\>^{\frac{1}{r},o}_0.
\end{gather}
These two equations indeed transform to~\eqref{eq:Ramond_TRR} under the open-closed dictionary.

Perhaps the most surprising effect of the open-closed correspondence is the $-1$ TRR (equation~\eqref{eq:-1_TRR}). If we believe the dictionary, then this equation suggests, on the open side, the existence of ``cotangent line classes" corresponding to a bundle $\CL_\text{boun}$ associated to a boundary component. These classes should satisfy the following equation for $h>0$ and $i,j\in[n]$:
\begin{align}\label{eq:bdry_trr}
\<\sigma^\text{boun}_h\prod_{p\in [n]}\tau_{d_p}^{\alpha_p}\prod_{q\in [m]}\sigma_{b_q}\>^{\frac{1}{r},o}_0=&\sum_{\substack{I\coprod J = [n]\\i,j\in J}}\sum_{\alpha=0}^{r-1}\<\sigma^\text{boun}_{h-1}\tau_0^{\alpha}\prod_{p\in I}\tau_{d_p}^{\alpha_q}\prod_{q\in [m]}\sigma_{b_q}\>^{\frac{1}{r},o}_0\<\tau_0^{r-2-\alpha}\prod_{p\in J}\tau_{d_p}^{\alpha_p}\>^{\frac{1}{r},\ext}_0\\
&+\sum_{\substack{I\coprod J = [n]\\K_I\coprod K_J = [m]\\i,j\in J}}\<\sigma^\text{boun}_{h-1}
\sigma\prod_{p\in I}\tau_{d_p}^{\alpha_p}\prod_{q\in K_I}\sigma_{b_q}\>^{\frac{1}{r},o}_0\<\prod_{p\in J}\tau_{d_p}^{\alpha_p}\prod_{q\in K_J}\sigma_{b_q}\>^{\frac{1}{r},o}_0,\notag
\end{align}
where $\sigma^\text{boun}_h$ corresponds to $h$ copies of $\CL_\text{boun}$. (There are analogous equations, also, if one or both of $i,j$ lies in $[m]$.)  Based on this hint, the first and third authors have constructed a ``class" that satisfies equation~\eqref{eq:bdry_trr}---or, more precisely, a line bundle $\CL_\text{boun}$ and boundary conditions for which generic extensions give rise to~\eqref{eq:bdry_trr}. We leave the details of the construction, however, to future work.

\bibliographystyle{plain}

\bibliography{biblio}

\end{document}